\newtheorem{proposition}{Proposition}[section]
\newtheorem{lemma}{Lemma}[section]
\titleformat*{\section}{\Large\bfseries}
\titleformat*{\subsection}{\large\bfseries}
\titleformat*{\subsubsection}{\large\bfseries}
\titleformat*{\paragraph}{\large\bfseries}
\titleformat*{\subparagraph}{\large\bfseries}
\newcommand{\tikzcmark}{%
\tikz[scale=0.23] {
    \draw[line width=1.5,line cap=round] (0.25,0) to [bend left=10] (1,1);
    \draw[line width=1.5,line cap=round] (0,0.35) to [bend right=1] (0.23,0);
}}
\title{ Structure-preserving finite element approximations of a hybrid relativistic cold fluid - particle model}
\author[1]{Tileuzhan Mukhamet}
\author[1]{Katharina Kormann}
\affil[1]{Faculty of Mathematics, Ruhr-University Bochum, Germany}
\date{\today}
\begin{document}
\maketitle	

\begin{abstract}
We derive mixed finite element discretizations of a cold relativistics fluid model from approximations of the Poisson bracket that preserve mass, energy and the divergence constraints. For time-discretization we derive an implicit energy-conserving average-vector field method or apply an explicit strong-stability preserving Runge--Kutta scheme. We also consider a coupling of the fluid model to relativistic particles. We perform a numerical study of the scheme which shows convergence and conservation properties of the proposed methods and apply the new scheme to a plasma wake field simulation.
\end{abstract}




\section{Introduction}

In this paper, we construct a structure preserving finite element discretization of the hybrid cold relativistic fluid model. Fluid models provide collective fluid-like description of plasma and are of interest when micro-scale kinetic effects can be neglected. Among them are the well-known magnetohydrodynamic (MHD) models as well the moment-based models \cite{grad1949kinetic, gradshyperbolicity}. Cold models are relatively basic fluid models that assume zero temperature and are relevant when pressure effects are negligible. These models are used for the construction of dispersion relations, to study wave phenomena, and dynamics of plasma near equilibrium. They also serve as a cheaper alternative to the more expensive particle-in-cell simulations. Cold models can be described as relativistic or non-relativistic, linear or non-linear, models with constant density or variable density. 

The model we consider here is the relativistic non-linear model with variable density that
consists of fluids and also particles that are coupled through the Maxwell's equations. Cold models such as this find its application in laser-plasma interaction and particle acceleration studies to simulate the evolution of wake fields generated by strong laser pulses and plasma beams \cite{bera2015fluid}. Finite volume discretization of the model was considered recently in \cite{warpx}.

The unknowns of the model are respectively the fluid density $\rho(\bm x, t) \in \mathbb R$, the fluid momentum $\bm M(\bm x, t) \in \mathbb R^3$, particles' positions $\bm X_k(t) \in \mathbb R^3$, particles' momentum $\bm U_k(t) \in \mathbb R^3$, the electric field $\bm E(\bm x, t) \in \mathbb R^3$, and the magnetic field $\bm B(\bm x, t) \in \mathbb R^3$. 
The fluid part includes the first two-moments of the relativistic Vlasov-Maxwell model, namely the density and momentum. The governing equations for the fluid density and the fluid momentum in Gauss units are given by
\begin{align} 
\label{eq:deg_zero_hybrid_rho_relativ}
\partial_t \rho &= - \nabla \cdot \left( \frac{\bm M}{\gamma(m \bm M/ \rho)} \right) \\
\label{eq:deg_zero_hybrid_Mk_relativ}
\partial_t \bm M &= -  \nabla \cdot \left( \bm M \otimes \frac{\bm M}{\rho \gamma(m \bm M/ \rho)} \right) + \rho \frac{e}{m} \left[ \bm E + \frac{\bm M}{\rho \, c \, \gamma(m \bm M/ \rho)} \times \bm B \right] \\ \label{eq:gamma}
 & \text{where }\gamma(\bm u)  = \sqrt{1 + \frac{\bm u \cdot \bm u}{m^2 c^2}}
\end{align}
the equations for the particles' positions and momentum are given by
\begin{align}
\label{eq:deg_zero_hybrid_xk_relativ}
&\partial_t \bm X_k = \frac{\bm U_k}{m \gamma(\bm U_k)}\\ \label{eq:deg_zero_hybrid_uk_relativ}
&\partial_t \bm U_k = e \left( \bm E(\bm X_k, t)+ \frac{\bm U_k \times \bm B(\bm X_k, t)}{cm \gamma(\bm U_k)} \right)
\end{align}
and the Maxwell's equations are given by
\begin{align} \label{eq:maxwell_e}
& -\frac{1}{c} \partial_t \bm E + \nabla \times \bm B = \frac{4 \pi}{c} \bm J \\ \label{eq:maxwell_b}
& \frac{1}{c} \partial_t \bm B + \nabla \times \bm E = 0 \\ \label{eq:maxwell_div_e}
& \nabla \cdot \bm E = 4 \pi \, e \, \left( \frac{\rho}{m}  + \sum_k^N w_k \, \delta(\bm x - \bm X_k) \frac{\bm U_k}{\gamma(\bm U_k)} - n_0 \right) \\ \label{eq:maxwell_div_b}
& \nabla \cdot \bm B  = 0
\end{align}
where $c$ is the speed of light, $m$ is the mass of an ion, $e$ is the charge of an ion,  and $n_0$ denotes the constant background density

The current density $\bm J(\bm x, t)$, that couples fluids and particles to the Maxwell's equations, is defined as follows
\begin{align}
\bm J = \frac{e}{m} \frac{\bm M}{\gamma(m \bm M / \rho)} + \frac{e}{m} \sum_k^N w_k \delta(\bm x - \bm X_k) \frac{\bm U_k}{\gamma(\bm U_k)}
\end{align}
where $w_k$ is the weight carried by the $k$-th particle; and $\delta(\bm x - \bm X_k)$ is the Dirac delta distribution centered at the particle's position. The presence of the Dirac delta distributions implies that the Maxwell's equations (\ref{eq:maxwell_e}-\ref{eq:maxwell_div_b}) are defined in the sense of distributions. It is therefore natural to use weak formulations when defining Maxwell’s equations at the discrete level.

In this work, we construct a finite-element based discretization that preserves the following invariants and constraints
\begin{align} \label{eq:mass_cont}
& \text{Total mass:      } \frac{1}{m} \int_{\Omega} \rho \, dx + \sum_k^N w_k\\ \label{eq:energy_cont}
& \text{Total energy:    } \int_{\Omega} \rho (\gamma(m \bm M/ \rho)-1) c^2 dx + \sum_k^N w_k (\gamma(\bm U_k)-1)m c^2 +\frac{1}{8 \pi} \int_{\Omega} \left( \bm E^2 + \bm B^2 \right) dx \\ \label{eq:gauss_cont}
& \text{weak Gauss law:  } \int_{\Omega} \nabla_w \cdot \bm E \, \phi \, dx = 4 \pi e  \int_{\Omega} \left(\frac{\rho}{m} + \sum_k^N w_k \delta(\bm x - \bm X_k) - n_0 \right) \, \phi \, dx \quad \forall \phi \in V \\ \label{eq:divB_cont} 
& \text{divB constraint: } \nabla \cdot \bm B = 0
\end{align}
at spatially and temporally discrete level. 
In the Gauss law equation \eqref{eq:gauss_cont}, $\nabla_w$ is a weak gradient operator and $\phi$ are test functions from a suitable finite element space $V$. The details are discussed in the upcoming Section \ref{sec:spatial_discretization}.

These conservation properties hold on bounded domains $\Omega \in \mathbb R^3$, subject to the boundary constraints $\bm M \cdot \bm n = 0$, $\bm n \times \bm E=0$, $\bm n \cdot \bm B=0$ on $\partial \Omega$.

In the past, several authors considered related models. Concerning fluid models, structure preserving discretization of the incompressible Euler equations, that includes the momentum and density equations, was developed in \cite{gawlik2020conservative}. Their finite element scheme preserves total mass, total energy, and total squared density at the spatially and temporally discrete levels. The method in \cite{gawlik2020conservative} is rooted in the variational discretization in \cite{gawlik2021variationallie}. The method was later extended to MHD in \cite{gawlik2022finite}. Our discretization of the fluid part (\ref{eq:deg_zero_hybrid_rho_relativ}-\ref{eq:deg_zero_hybrid_Mk_relativ}) resembles the discretization in \cite{gawlik2020conservative} but is not the same. The relativistic equations has stronger nonlinearity, which makes structure preservation at the fully discrete level more challenging. We resolve non-linearities with the average-vector field gradient \cite{average_vector} and preserve the invariants at the fully-discrete level. Another feature of our discretization is that we use the Poisson structure. Structure preserving discretization of the model with Poisson structure was developed recently for the non-relativistic linearized cold model in \cite{moralsancoz}. Exploiting the Poisson structure allows for a discretization with an antisymmetric form, that preserves energy and prevents numerical heating. This is not the only approach, however, there are other methods that lead to energy invariance \cite{gawlik2021structure, CARLIER2025113647}.  

Structure-preserving particle-in-cell discretization based on the Finite Element Exterior Calculus framework was developed in \cite{gempic}. This particle method exactly preserves mass, Gauss law, and divB constraint. At the semi-discrete level it also preserves energy; while at the fully discrete level, the energy is bounded but not conserved. Exact energy conserving time-propagator for the structure-preserving particle-in-cell method was later developed in \cite{kormann_time} using the discrete gradient method \cite{discretegradient, average_vector}.

Models that consider both particles and fluids are known as hybrid models. These models are attractive because they offer a balance between the computational efficiency of fluid models and the accuracy of particle-based approaches. In reference \cite{holderied2021mhd}, a structure-preserving discretization was derived for the linearized MHD equations with energetic particles. We note that the model (\ref{eq:deg_zero_hybrid_rho_relativ}-\ref{eq:maxwell_div_b}) is different from the linear model considered in \cite{holderied2021mhd}.  To the best of our knowledge the   structure-preserving discretization of the hybrid model (\ref{eq:deg_zero_hybrid_rho_relativ}-\ref{eq:maxwell_div_b}) has not been previously studied in the literature.  The only other discretization of the current model is due to \cite{warpx}.

Section 2 introduces the Poisson structure of the continuum model. Structure-preserving spatial discretizations based on two choices for the basis functions are proposed in Section 3. In Section 4, an implicit time-integration scheme is developed keeping the original invariants. Explicit strong-stability preserving Runge-Kutta methods are also considered. The latter one, in the absence of particles preserves exactly total mass, Gauss's law, and divB constraint; while the energy error is of the order of the time-integration error $\mathcal O \left( \Delta t^p \right)$. In the presence of particles, Gauss's law is not preserved and we propose cleaning to contain it. In Section 5, we study the schemes' properties numerically and show application to plasma wake simulation.     

\section{Hamiltonian structure and weak formulation}

\subsection{Poisson bracket of the cold relativistic plasma model}
The model (\ref{eq:deg_zero_hybrid_rho_relativ}-\ref{eq:maxwell_div_b}) is a conservative system describing collisionless plasma. This model admits a so called Hamiltonian structure that consists of a Poisson bracket and a Hamiltonian. A reader not familiar with these terms can consult for example \cite{morrison1998hamiltonian}. 

Here, we use only basic facts about the Hamiltonian structure to construct our conservative weak formulations. First we introduce a Poisson bracket $\{\cdot,\cdot\}$.  Let $f(\bm x,t)$ be a function of position and time and $\mathcal F[f](t)$ be a functional that maps functions of position and time to  functions of time only. The bracket allows to expresses the time-rate of the functional as follows:
\begin{align} \label{eq:dt_F}
\partial_t \mathcal F = \{ \mathcal F, \mathcal H \}
\end{align} 
where $\mathcal H[f](t)$ is the Hamiltonian functional (total energy).
The model (\ref{eq:deg_zero_hybrid_rho_relativ}-\ref{eq:maxwell_b}) consists of functions $\rho, \bm M, \bm B, \bm E$ that depend on position and time and functions $\bm X_k, \bm U_k$ that depend on time. We can view $\bm X_k$ and $\bm U_k$ as the functions of position as well by multiplying them with the unit function $\mathds 1 (\bm x)$. 

We consider the following Poisson bracket
\begin{align}  \notag
\{ \mathcal F, \mathcal G \} & = \int_{\Omega} \left[ \frac{\delta \mathcal{G}}{ \delta \bm M} \cdot \nabla \frac{\delta \mathcal{F}}{\delta \bm M} -\frac{\delta \mathcal{F}}{ \delta \bm M} \cdot \nabla \frac{\delta \mathcal{G}}{\delta \bm M}\right] \cdot \bm M \, dx + \int_{\Omega} \left[ \frac{\delta \mathcal{G}}{\delta \bm M} \cdot \nabla \frac{\delta \mathcal{F} }{\delta \rho} - \frac{\delta \mathcal{F}}{\delta \bm M} \cdot \nabla \frac{\delta \mathcal{G} }{\delta \rho}  \right] \, \rho \, dx\\ \label{eq:degree_zero_hybrid_bracket_particles}
&+ \frac{4 \pi e}{m} \int_{\Omega} \left[ \frac{\delta \mathcal{F}}{ \delta \bm M} \cdot \frac{\delta \mathcal{G} }{\delta \bm E}-\frac{\delta \mathcal{G} }{\delta \bm M} \cdot \frac{\delta \mathcal{F}}{\delta \bm E}   \right] \, \rho \, dx + \frac{e}{mc} \int_{\Omega} \rho \bm B \cdot \left( \frac{\delta \mathcal{F}}{\delta \bm M} \times \frac{\delta \mathcal{G}}{\delta \bm M} \right) \, dx\\ \notag
&+ \sum_k^N \frac{1}{w_k} \left[ \frac{\partial \mathcal F}{\partial \bm X_k} \cdot \frac{\partial \mathcal G}{\partial \bm U_k} - \frac{\partial \mathcal G}{\partial \bm X_k} \cdot \frac{\partial \mathcal F}{\partial \bm U_k} \right] + 4 \pi \, e \sum_k^N \left. \left[ \frac{\partial \mathcal F}{\partial \bm U_k} \cdot \frac{\delta \mathcal G}{\delta \bm E} -  \frac{\partial \mathcal G}{\partial \bm U_k} \cdot \frac{\delta \mathcal F}{\delta \bm E} \right] \right\vert_{(\bm X_k, \bm U_k)}\\ \notag
& + \frac{e}{c} \sum_k^N \frac{1}{w_k} \bm B(\bm X_k, t) \cdot \left( \frac{\partial \mathcal F}{\partial \bm U_k} \times \frac{\partial \mathcal G}{\partial \bm U_k} \right) 
+ 4 \pi c \int_{\Omega} \left[ \frac{\delta \mathcal G}{\delta \bm B} \cdot \left( \nabla \times \frac{\delta \mathcal F}{\delta \bm E} \right)  -  \frac{\delta \mathcal F}{\delta \bm B} \cdot \left( \nabla \times \frac{\delta \mathcal G}{\delta \bm E} \right) \right] dx
\end{align}
where $\mathcal F$ and $\mathcal G$ are functionals of $(\rho, \bm M, \mathds 1 (\bm x) \bm X_k, \mathds 1(\bm x) \bm U_k,\bm E, \bm B )$. This bracket can be derived from the kinetic-multifluid bracket in \cite[equation (45)]{tronci2010}. To this end, one assumes a particle-like distribution for the energetic component $f=\sum_p^N w_p \, \delta(\bm x - \bm X_p) \, \delta(\bm u - \bm U_p)$ in \cite[equation (45)]{tronci2010} and replaces functional derivatives with function derivatives using the relations in \cite[equation (4.19a,b)]{gempic}.  In the bracket \eqref{eq:degree_zero_hybrid_bracket_particles}, the operator $\delta \mathcal F /\delta \cdot$ takes functional derivatives 
\begin{align}
\frac{d}{d \epsilon} \left. \mathcal F[\bm M +\epsilon \delta \bm M] \right\vert_{\epsilon =0} = \int_{\Omega} \frac{\delta \mathcal F}{\delta \bm M} \cdot  \delta \bm M \, dx
\end{align}

The functionals  $\mathcal F$ and $\mathcal G$ can be considered as functionals of $(\rho, \bm M,\bm E, \bm B )$ and functions of $(\bm X_k, \bm U_k)$. The operator $\partial \mathcal F / \partial  \cdot $ takes regular partial derivatives with respect to $\bm X_k$ and $\bm U_k$, evaluated at a fixed point in time.

The Hamiltonian, which is also the total energy of the system, is given by
\begin{align} \label{eq:relativistic_hamiltonian_hybrid_degree_zero_conserved}
\mathcal H = \int_{\Omega} \rho (\gamma(m \bm M/ \rho)-1) c^2 dx + \sum_k^N w_k (\gamma(\bm U_k)-1)m c^2 +\frac{1}{8 \pi} \int_{\Omega} \left( \bm E^2 + \bm B^2 \right) dx
\end{align}
its functional/function derivatives are
\begin{align} \label{eq:functional_der_rel_hamil} 
& \frac{\delta \mathcal H}{\delta \rho}= \left[ \gamma(m \bm M/ \rho)-1-\frac{\bm M \cdot \bm M}{\rho^2 \, c^2 \, \gamma(m \bm M/ \rho)} \right] c^2,  \hspace{0.5cm} \frac{\delta \mathcal H}{\delta \bm M} = \frac{\bm M}{\rho \gamma(m \bm M/ \rho)} \\ \notag & \frac{\delta \mathcal H}{\delta \bm E}=\frac{\bm E}{4 \pi} ,
\hspace{0.5cm} \frac{\delta \mathcal H}{\delta \bm B}=\frac{\bm B}{4 \pi}, \hspace{0.5cm} \frac{\partial \mathcal H}{\partial \bm U_k}=\frac{w_k \, \bm U_k}{m \, \gamma(\bm U_k)}
\end{align}

\subsection{Weak equations of motion}
In this section, we derive weak formulations for the system (\ref{eq:deg_zero_hybrid_rho_relativ}-\ref{eq:maxwell_b}) using the bracket \eqref{eq:degree_zero_hybrid_bracket_particles} and the Hamiltonian \eqref{eq:relativistic_hamiltonian_hybrid_degree_zero_conserved}.

To this end, we assume the variables $\rho, \bm M, \bm E, \bm B$ are smooth functions and we test them with smooth time-independent test functions $\phi(\bm x), \bm \mu(\bm x), \bm \nu(\bm x), \bm \tau(\bm x)$ 
\begin{align} \label{eq:functionals_for_weak_form}
\int_{\Omega} \rho \, \phi \, dx \hspace{0.5cm} \int_{\Omega} \bm M \cdot \bm \mu \, dx \hspace{0.5cm} \int_{\Omega} \bm E \cdot \bm \nu \, dx \hspace{0.5cm} \int_{\Omega} \bm B \cdot \bm \tau \, dx 
\end{align}
The integrals in \eqref{eq:functionals_for_weak_form} are functionals of $\rho$, $\bm M$, $\bm E$, and $\bm B$, respectively. We substitute these functionals for $\mathcal F$ in \eqref{eq:dt_F} and derive weak formulations. To this end, we need to evaluate functional derivatives that appear inside the bracket \eqref{eq:degree_zero_hybrid_bracket_particles}. Most of the functional derivatives are zero; the functional derivatives of interest are
\begin{align}
\frac{\delta \int_{\Omega} \rho \, \phi \, dx}{\delta \rho} = \phi \hspace{0.5cm} \frac{\delta \int_{\Omega} \bm M \cdot \bm \mu  \, dx}{\delta \bm M} = \bm \mu \hspace{0.5cm} \frac{\delta \int_{\Omega}  \bm E \cdot \bm \nu  \, dx}{\delta \bm E} = \bm \nu \hspace{0.5cm} \frac{\delta \int_{\Omega}  \bm B \cdot \bm \tau \, dx}{\delta \bm B} = \bm \tau 
\end{align}
The functional derivatives of the Hamiltonian $\mathcal H$ were evaluated previously in \eqref{eq:functional_der_rel_hamil}. Using these expressions we obtain the following weak formulations of the equations of motion of the dynamic variables
\begin{align} \label{eq:rel_weak_form_rho_original}
& \int_{\Omega} \partial_t \rho \, \phi = \int_{\Omega} \rho \frac{\bm M}{\rho \gamma(m \bm M/ \rho)} \cdot \nabla \phi \\
& \int_{\Omega} \partial_t \bm M \cdot \bm \mu \, dx = \int_{\Omega} \left[ \frac{\bm M}{\rho \gamma(m \bm M/ \rho)} \cdot \nabla \bm \mu -\bm \mu \cdot \nabla \left( \frac{\bm M}{\rho \gamma(m \bm M/ \rho)} \right) \right] \cdot \bm M \, dx \\ \notag
& \hspace{2.5cm} - \int_{\Omega} \rho \, \bm \mu \cdot \nabla \left( \gamma(m \bm M/ \rho) -1- \frac{\bm M^2}{\rho^2 c^2 \gamma(m \bm M/ \rho)} \right) c^2 \, dx \\ \notag
& \hspace{2.5cm}  + \frac{e}{m} \int_{\Omega} \rho \left( \bm E + \frac{\bm M}{\rho \gamma(m \bm M/ \rho)} \times \frac{\bm B}{c} \right)  \cdot \bm \mu \, dx \\ \label{eq:rel_weak_form_E_original}
& \int_{\Omega} \partial_t \bm E \cdot \bm \nu \, dx = c \int_{\Omega} \bm B \cdot (\nabla \times \bm \nu) \, dx - \frac{4 \pi e }{m} \int_{\Omega}  \left( \rho \frac{\bm M}{\rho \gamma(m \bm M/ \rho) } \cdot \bm \nu \right) \, dx \\ \notag
& \hspace{2.5cm} - \frac{4 \pi e}{m} \sum_k^{N} \frac{w_k \bm U_k}{\gamma(\bm U_k)}  \cdot \bm \nu(\bm X_k) \, dx 
\\ \label{eq:rel_weak_form_B_original}
& \int_{\Omega} \partial_t \bm B \cdot \bm \tau \, dx = - c \int_{\Omega} \bm \tau \cdot (\nabla \times \bm E) \, dx 
\end{align}
Next, we consider the following functionals for the particles' position and momentum
\begin{align} \label{eq:pos_mom_functionals}
\int_{\Omega} \frac{\mathds 1(\bm x)}{|\Omega|} \bm X_k \, dx \hspace{1cm} \int_{\Omega} \frac{\mathds 1(\bm x)}{|\Omega|} \bm U_k \, dx
\end{align}
To evaluate the partial derivatives of the position and momentum functionals \eqref{eq:pos_mom_functionals}, we note that these functionals are equal to $\bm X_k$ and $\bm U_k$, respectively. We denote the $i$-th components of the position and momentum functions of the $l$-th particle by $(X_l)_i$ and $(U_l)_i$. The partial derivatives are given by
\begin{align} \notag
\frac{\partial  (X_l)_i}{\partial (X_k)_j} = \delta_{lk} \delta_{ij} \hspace{1cm} \frac{\partial  (U_l)_i}{\partial (U_k)_j} = \delta_{lk} \delta_{ij}
\end{align}
where $\delta_{ij}$ is the Kronecker delta that is equal to 1 for $i=j$ and $0$ otherwise.
Substituting $\bm X_k$ and $\bm U_k$ into the bracket \eqref{eq:degree_zero_hybrid_bracket_particles} together with the Hamiltonian \eqref{eq:relativistic_hamiltonian_hybrid_degree_zero_conserved}, and its partial derivatives in \eqref{eq:functional_der_rel_hamil}, the particle equations of motions read
\begin{align} \label{eq:particle_X_k}
&\partial_t \bm X_k(t) = \frac{\bm U_k}{m \gamma(\bm U_k)}\\ \label{eq:particle_U_k}
&\partial_t \bm U_k(t) = e \left( \bm E(\bm X_k, t)+ \frac{\bm U_k \times \bm B(\bm X_k, t)}{cm \gamma(\bm U_k)} \right) 
\end{align}

For the ease of notation, let us  introduce the relativistic velocity variable $\bm w = \frac{\bm M}{\rho \gamma(m \bm M/ \rho)}$. By studying eigenvalues of the advection operator of (\ref{eq:deg_zero_hybrid_rho_relativ}-\ref{eq:maxwell_b}), it is shown in Appendix \ref{appendix_A} that this quantity is the eigenvalue of the system and represents its speed. Our weak formulation reads:
\begin{align} \label{eq:weak_rho}
& \int_{\Omega} \partial_t \rho \, \phi = \int_{\Omega} \rho \, \bm w \cdot \nabla \phi \, dx \\ \label{eq:weak_M}
& \int_{\Omega} \partial_t \bm M \cdot \bm \mu \, dx = \int_{\Omega} \left( \bm w \cdot \nabla \bm \mu -\bm \mu \cdot \nabla \bm w \right) \cdot \bm M \, dx \\ \notag
& \hspace{2.5cm} - \int_{\Omega} \rho \, \bm \mu \cdot \nabla \left( \gamma(m \bm M/ \rho) -1- \frac{\bm M^2}{\rho^2 c^2 \gamma(m \bm M/ \rho)} \right) c^2 \, dx \\ \notag
& \hspace{2.5cm}  + \frac{e}{m}  \int_{\Omega} \rho \left( \bm E + \bm w \times \frac{\bm B}{c} \right)  \cdot \bm \mu \, dx \\ \label{eq:weak_E}
& \int_{\Omega} \partial_t \bm E \cdot \bm \nu \, dx = c \int_{\Omega} \bm B \cdot (\nabla \times \bm \nu) \, dx - \frac{4 \pi e }{m} \int_{\Omega}  \left( \rho \bm w \cdot \bm \nu \right) \, dx \\ \notag
& \hspace{2.5cm} - \frac{4 \pi e}{m} \sum_k^{N} \frac{w_k \bm U_k}{\gamma(\bm U_k)}  \cdot \bm \nu(\bm X_k) \, dx 
\\ \label{eq:weak_B}
& \int_{\Omega} \partial_t \bm B \cdot \bm \tau \, dx = - c \int_{\Omega} \bm \tau \cdot (\nabla \times \bm E) \, dx  \\
& \text{ with }\bm w = \frac{\bm M}{\rho \gamma(m \bm M/ \rho)}
\end{align}
\begin{align} \label{eq:X_k}
&\partial_t \bm X_k(t) = \frac{\bm U_k}{m \gamma(\bm U_k)} \hspace{0.5cm} \text{with } \bm X_k \in \Omega_h \\ \label{eq:U_k}
&\partial_t \bm U_k(t) = e \left( \bm E(\bm X_k, t)+ \frac{\bm U_k \times \bm B(\bm X_k, t)}{cm \gamma(\bm U_k)} \right) 
\end{align}

The weak formulation above has a peculiar momentum equation \eqref{eq:weak_M}. An arduous derivation in Appendix \ref{appendix_B} shows that this formulation can be derived starting from the original momentum equation \eqref{eq:deg_zero_hybrid_Mk_relativ} as well.

If we set $\phi=1$ in the density equation \eqref{eq:weak_rho}, we deduce conservation of total mass
\begin{align}
\int_{\Omega} \partial_t \rho \, dx= 0
\end{align}
If we take $\phi = \left( \gamma(m \bm M/ \rho) -1- \frac{\bm M^2}{\rho^2 c^2 \gamma(m \bm M/ \rho)} \right) c^2$ in the density equation \eqref{eq:weak_rho}, $\bm \mu= \bm w$ in the momentum equation \eqref{eq:weak_M}, $\bm \nu=\bm E$ in Ampere's equation \eqref{eq:weak_E}, and $\bm \tau = \bm B$ in Faraday's equation \eqref{eq:weak_B}, we deduce conservation of total energy
\begin{align} \label{eq:energy_rate}
\frac{d}{dt} \mathcal H &= \int_{\Omega} \partial_t \rho \left( \gamma(m \bm M/\rho)-1 -\frac{\bm M^2}{\rho^2 \, c^2 \, \gamma(m \bm M/\rho)} \right) c^2 \, dx + \int_{\Omega} \partial_t \bm M \cdot \bm w \, dx \\ \notag
&+ \frac{1}{4 \pi} \int_{\Omega} \left( \partial_t \bm E \cdot \bm E + \partial_t \bm B \cdot \bm B \right) \, dx  + \sum_k^N \frac{w_k \bm U_k}{m \gamma(\bm U_k)} \cdot \partial_t \bm U_k  = 0
\end{align}
We will use this basic property to construct structure preserving discretizations. 

\section{Spatial discretization} \label{sec:spatial_discretization}
In this section, we derive two mixed-finite element schemes for the system (\ref{eq:weak_rho}-\ref{eq:U_k}) based on two different choices of the spaces of approximation. 

Let us first introduce the notation.
We start by introducing the infinite dimensional spaces
\begin{align}
H^1(\Omega) &= \{f \in L^2(\Omega)\,\,\,  | \,\,\, \nabla f \in L^2(\Omega)^d \} \\
H(\text{div}, \Omega) &= \{ f \in L^2(\Omega)^d \,\,\, | \,\, \nabla \cdot f \in L^2(\Omega) \} \\
H(\text{curl}, \Omega) &= \{ f \in L^2(\Omega)^d \,\,\, | \,\, \nabla \times f \in L^2(\Omega)^d \} 
\end{align}
The key property of functions in the space $H(\text{div}, \Omega)$ is that their normal component is continuous accross any surface in domain. Similarly, for the functions in $H(\text{curl}, \Omega)$ the tangential component is continuous accross any surface.
We consider the following sub-spaces with homogeneous constraints on the boundary $\partial \Omega$
\begin{align} \label{eq:H1_hom}
\mathring {H}^1(\Omega) &= \{f \in H^1(\Omega)\,\,  | \,\, f=0 \text{ on } \partial \Omega \} \\ \label{eq:Hdiv_hom}
\mathring{H}(\text{div}, \Omega) &= \{ f \in H(\text{div}, \Omega) \,\,\, | \,\, n \cdot f = 0 \text{ on } \partial \Omega\} \\ \label{eq:Hcurl_hom}
\mathring{H}(\text{curl}, \Omega) &= \{ f \in H(\text{curl}, \Omega) \,\,\, | \,\, n \times f = 0 \text{ on } \partial \Omega\} \\ \label{eq:L2_hom}
\mathring{L}^2(\Omega) & = \{ f \in L^2(\Omega) \,\, | \int_{\Omega} f dx = 0\}
\end{align}
Let $\mathcal{T}_h$ denote a partitioning of $\Omega$ and let $\mathcal{E}_h$ denote the set of interior faces of $\mathcal{T}_h$. Let $K$ denote a cell in $\mathcal{T}_h$ and $e$ denote an edge in $\mathcal{E}_h$. The union of all cells in $\mathcal T_h$ defines the discrete domain $\Omega_h=\cup_{K \in \mathcal T_h} K$.

We consider several finite-dimensional spaces. For $k \geq 0$, we denote by $P_k(K)$ the space of polynomials of degree at most $k$ on cell $K \in \mathcal{T}_h$.  Moreover, we introduce
\begin{align} \label{eq:q_space_definition}
Q_{k} &= \{ f \in H^1(\Omega) \,\,\,  | \,\,\, f|_K  \in P_{k}(K), \,\, \forall K \in \mathcal{T}_h \} \\ 
 \label{eq:n_space_definition}
N_k &= \{ f \in H(\text{curl}, \Omega) \,\, | \,\, f|_K \in P_k(K)^d+ \bm x \times P_k(K)^d, \,\, \forall K \in \mathcal{T}_h \} 
\\ \label{eq:rt_space_definition}
RT_k &= \{ f \in H(\text{div}, \Omega) \,\, | \,\, f|_K \in P_k(K)^d+ \bm x P_k(K), \,\, \forall K \in \mathcal{T}_h \} \\
\label{eq:dg_space_definition}
DG_k &= \{ f \in L^2(\Omega) \,\,\,  | \,\,\, f|_K  \in P_k(K), \,\, \forall K \in \mathcal{T}_h \} 
\end{align}
Finite dimensional sub-spaces with homogeneous constraints are then defined by $\mathring{Q}_k(\Omega) = Q_k(\Omega) \cap \mathring{H}^1(\Omega)$, $\mathring{RT}_k(\Omega) = RT_k(\Omega) \cap \mathring{H}(\text{div}, \Omega)$, $\mathring{N}_k(\Omega) = N_k(\Omega) \cap \mathring{H}(\text{curl}, \Omega)$, $\mathring{DG}_k(\Omega) = DG_k(\Omega) \cap \mathring{L}^2(\Omega)$. On bounded domains with homogeneous constraints these spaces form the exact sequence \cite[Section 4.5.5]{arnold2018finite}:
\begin{align} \label{eq:exact_sequence}
\mathring Q_{k+1} \xrightarrow[]{\nabla} \mathring N_k \xrightarrow[]{\nabla \times} \mathring {RT}_k \xrightarrow[]{\nabla \cdot} \mathring{DG}_k
\end{align}

The diagram \eqref{eq:exact_sequence}, encodes the following properties
\begin{align} \label{eq:prop_Qk}
& \forall  \phi_h \in \mathring{Q}_{k+1}: \, \nabla \phi_h \in \mathring{N}_k  \text{ and } \nabla \times \nabla \phi_h = 0 \\ \label{eq:prop_Nk}
& \forall  \bm \nu_h \in \mathring{N}_k: \, \nabla \times \bm \nu_h \in \mathring{RT}_k \text{ and } \nabla \cdot \nabla \times \bm \nu_h = 0 \hspace{0.5cm}  \\ \label{eq:prop_RTk}
& \forall \bm \tau_h \in \mathring{RT}_k : \nabla \cdot \bm \tau_h \in \mathring{DG}_k  \hspace{0.5cm} 
\end{align} 

We define the weak divergence operator $\nabla_w \cdot: \mathring N_{k} \rightarrow \mathring Q_{k+1} $ and the weak gradient operator $\nabla_w: \mathring{DG}_k \rightarrow \mathring{RT}_k$ : 
\begin{align} \label{eq:weak_divergence}
&\int_{\Omega} \nabla_w \cdot \bm E_h \, \phi_h \, dx = - \int_{\Omega} \bm E_h \cdot \, \nabla \phi_h \, dx \hspace{1cm} \forall \bm E_h \in \mathring{N}_k \text{ and } \forall \phi_h \in \mathring{Q}_{k+1} \\ \label{eq:weak_gradient}
&\int_{\Omega} \nabla_w \varphi_h \cdot \bm B_h \, dx = - \int_{\Omega} \nabla \cdot \bm B_h \, \varphi_h \, dx \hspace{1cm} \forall \bm B_h \in \mathring{RT}_k \text{ and } \forall \varphi_h \in \mathring{DG}_k
\end{align}

Next, for a function $g$, we define the jump and the average on the edge $e=K_1 \cap K_2 \in \mathcal{E}_h$
\begin{align}
\llbracket g \rrbracket = g_1 \bm n_1 + g_2 \bm n_2, \hspace{1cm} \{g\} = \frac{g_1+g_2}{2}
\end{align}
where $g_1$ and $g_2$ are function values on $K_1$ and $K_2$, $\bm n_1$ is the unit normal vector to $e$ pointing from $K_1$ to $K_2$, and $\bm n_2$ is the unit normal vector to $e$ pointing from $K_2$ to $K_1$, i.e. $\bm n_1=-\bm n_2$. For vector fields, we assign a normal vector to each edge and define the jump in the direction of this normal vector. For example, if $\bm n_1$ is selected as the normal vector of the edge $e$, then we define
\begin{align}
\llbracket \bm E \rrbracket = \bm E_1 - \bm E_2 \hspace{1cm} \{ \bm E\} = \frac{\bm E_1 + \bm E_2}{2}
\end{align}

\subsection{A flux-free formulation}

Our first formulation is based on the following choice of finite element spaces: $\rho_h \in Q_{k+1}$, $\bm M_h \in Q_{k+1}$, $\bm E_h \in \mathring{N}_k$, $\bm B_h \in \mathring{RT}_k$. This choice yields the following weak form of the equations (\ref{eq:weak_rho}-\ref{eq:U_k}):
\begin{align} \label{eq:qqnrt_rho}
& \int_{\Omega} \partial_t \rho_h \, \phi_h = \int_{\Omega} \rho_h \, \bm w_h \cdot \nabla \phi_h \, dx  \hspace{1cm} \forall \phi_h \in Q_{k+1} \\ \label{eq:qqnrt_M}
& \int_{\Omega} \partial_t \bm M_h \cdot \bm \mu_h \, dx = \int_{\Omega} \left( \bm w_h \cdot \nabla \bm \mu_h -\bm \mu_h \cdot \nabla \bm w_h \right) \cdot \bm M_h \, dx \\ \notag
& \hspace{2.5cm} - \int_{\Omega} \rho_h \, \bm \mu_h \cdot \nabla \left( \overline{ \gamma(m \bm M_h/ \rho_h) -1- \frac{\bm M_h \cdot \bm M_h}{\rho_h^2 c^2 \gamma(m \bm M_h/ \rho_h)} } \right) c^2  \, dx \\ \notag
& \hspace{2.5cm}  + \frac{e}{m} \int_{\Omega} \rho_h \left( \bm E_h + \bm w_h \times \frac{\bm B_h}{c} \right)  \cdot \bm \mu_h \, dx \hspace{1cm} \forall \bm \mu_h \in Q_{k+1}^d \\ \label{eq:qqnrt_E}
& \int_{\Omega} \partial_t \bm E_h \cdot \bm \nu_h \, dx = c \int_{\Omega} \bm B_h \cdot (\nabla \times \bm \nu_h) \, dx - \frac{4 \pi e }{m} \int_{\Omega}   \rho_h \bm w_h \cdot \bm \nu_h  \, dx \\ \notag
& \hspace{2.5cm} - \frac{4 \pi e}{m} \sum_k^{N} \frac{w_k \bm U_k}{\gamma(\bm U_k)}  \cdot \bm \nu_h(\bm X_k)  \hspace{1cm} \forall \bm \nu_h \in \mathring{N}_k
\\ \label{eq:qqnrt_B}
& \int_{\Omega} \partial_t \bm B_h \cdot \bm \tau_h \, dx = - c \int_{\Omega} \bm \tau_h \cdot (\nabla \times \bm E_h) \, dx \hspace{1cm} \forall \bm \tau_h \in \mathring{RT}_k \\ \label{eq:qqnrt_w}
& \text{ with }\bm w_h = \overline{ \frac{\bm M_h}{\rho_h \gamma(m \bm M_h/ \rho_h)}}
\end{align}
\begin{align} \label{eq:qqnrt_X_k}
&\partial_t \bm X_k(t) = \frac{\bm U_k}{m \gamma(\bm U_k)} \hspace{0.5cm} \text{for } \bm X_k \in \Omega_h\\ \label{eq:qqnrt_U_k}
&\partial_t \bm U_k(t) = e \left( \bm E_h(\bm X_k, t)+ \frac{\bm U_k \times \bm B_h(\bm X_k, t)}{cm \gamma(\bm U_k)} \right) 
\end{align}
where the bar in the momentum equation \eqref{eq:qqnrt_M} denotes the $L_2$ projection onto $Q_{k+1}$, while the bar in the velocity equation \eqref{eq:qqnrt_w} denotes the $L_2$ projection onto $Q_{k+1}^d$. 

\vspace{2.5mm}
\noindent \textbf{Conservation properties of the spatial discretization.}
Here we study the conservation properties of our spatial discretization. First, we need a Lemma that states the charge balance for the particles.
\begin{lemma} \label{prop:particle_mass_conservation}
The Particle-in-cell equation \eqref{eq:qqnrt_X_k}-\eqref{eq:qqnrt_U_k} satisfy the following charge balance equation
\begin{align}
\sum_k^{N} \int_{\Omega_h} \, \partial_t \left( w_k \delta(\bm x - \bm X_k) \right) \, \phi_h(\bm x) \, dx = \sum_k^{N} \frac{w_k \bm U_k}{m \, \gamma(\bm U_k)}  \cdot \nabla \phi_h(\bm X_k) \hspace{1cm} \forall \phi_h \in \mathring{Q}_{k+1}
\end{align}
\end{lemma}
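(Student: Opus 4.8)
The plan is to reduce the identity to a single-particle chain-rule computation and then sum over the particles. Fix one summand on the left-hand side, corresponding to a particle with position $\bm X_k(t)$ and weight $w_k$, and fix a test function $\phi_h \in \mathring{Q}_{k+1}$. View $t \mapsto \delta(\cdot - \bm X_k(t))$ as a time-parametrized distribution in the spatial variable. Since the time dependence enters only through $\bm X_k(t)$, the chain rule gives
\begin{align*}
\partial_t\big( w_k\,\delta(\bm x - \bm X_k(t)) \big) = -\,w_k\,\dot{\bm X}_k(t)\cdot(\nabla\delta)(\bm x - \bm X_k(t)),
\end{align*}
where $\nabla\delta$ is the gradient of $\delta$ with respect to its argument; because that argument is $\bm x - \bm X_k$, one has the elementary identity $(\nabla\delta)(\bm x - \bm X_k) = \nabla_{\bm x}\!\big[\delta(\bm x - \bm X_k)\big]$.

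Next I would pair this expression against $\phi_h$ over $\Omega_h$ and integrate by parts componentwise in $\bm x$ in the distributional sense. The boundary term vanishes: the Dirac mass is supported at the interior point $\bm X_k \in \Omega_h$, and $\phi_h$ vanishes on $\partial\Omega$, so only the interior duality pairing $\langle \delta(\cdot - \bm X_k),\, \partial_j\phi_h\rangle = \partial_j\phi_h(\bm X_k)$ remains. This yields
\begin{align*}
\int_{\Omega_h} \partial_t\big( w_k\,\delta(\bm x - \bm X_k) \big)\,\phi_h(\bm x)\,dx = -\,w_k\,\dot{\bm X}_k\cdot\int_{\Omega_h}\nabla_{\bm x}\!\big[\delta(\bm x - \bm X_k)\big]\,\phi_h(\bm x)\,dx = w_k\,\dot{\bm X}_k\cdot\nabla\phi_h(\bm X_k).
\end{align*}
Substituting the discrete particle position equation \eqref{eq:qqnrt_X_k}, namely $\dot{\bm X}_k = \bm U_k/(m\gamma(\bm U_k))$, turns the right-hand side into $\tfrac{w_k\bm U_k}{m\gamma(\bm U_k)}\cdot\nabla\phi_h(\bm X_k)$. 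Summing over $k = 1,\dots,N$ produces exactly the claimed charge-balance identity.

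The argument is essentially bookkeeping; the only points requiring attention are the sign in the chain rule combined with the distributional integration by parts, and the implicit assumption that $\bm X_k$ lies in the interior of a cell of $\mathcal T_h$, so that $\nabla\phi_h(\bm X_k)$ is well defined (the gradient of a $Q_{k+1}$ function is piecewise polynomial but may jump across interelement faces, a measure-zero set of positions that we exclude, or on which we fix a representative). I do not anticipate any genuine difficulty.
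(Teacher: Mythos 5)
Your proposal is correct and follows essentially the same route as the paper's proof: chain rule on $t \mapsto \delta(\bm x - \bm X_k(t))$, distributional integration by parts against $\phi_h$ (equivalently the sifting property), and substitution of the position update \eqref{eq:qqnrt_X_k}. Your version is in fact slightly more careful than the paper's displayed intermediate step about the two compensating minus signs and about $\nabla\phi_h(\bm X_k)$ being well defined away from interelement faces, but these are refinements of the same argument, not a different one.
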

\begin{proof}

\begin{align*}
\sum_k^{N} \int_{\Omega_h} \, \partial_t \left( w_k \delta(\bm x - \bm X_k) \right) \, \phi_h(\bm x) \, dx  
&\stackrel{\text{(chain rule)}}{=} \sum_k^{N} \int_{\Omega_h} w_k \, \partial_t \bm X_k \cdot \nabla \delta(\bm x - \bm X_k) \, \phi_h(\bm x) \, dx \\ \notag
 &\stackrel{\text{(integration by parts)}}{=} \sum_k^{N} \int_{\Omega_h} w_k \, \delta(\bm x - \bm X_k) \partial_t \bm X_k  \cdot \nabla \phi_h(\bm x) \, dx \\
&\stackrel{\eqref{eq:qqnrt_X_k}}{=}\sum_k^{N} \frac{w_k \bm U_k}{m \, \gamma(\bm U_k)}  \cdot \nabla \phi_h(\bm X_k) 
\end{align*}
\end{proof}

With this, we can prove the following conservation properties:

\begin{proposition} \label{prop:h1h1_proof}
The scheme (\ref{eq:qqnrt_rho}-\ref{eq:qqnrt_U_k}), conserves over time the value of the total fluid mass $\int_{\Omega_h} \rho \, dx$, the total particle mass $\sum_k^N w_k$, the total energy $\int_{\Omega_h} \rho_h (\gamma(m \bm M_h/ \rho_h)-1) c^2 dx + \sum_k^N w_k (\gamma(\bm U_k)-1)m c^2 +\frac{1}{8 \pi} \int_{\Omega_h} \left( \bm E_h^2 + \bm B_h^2 \right) dx$, weak Gauss' law $ \int_{\Omega_h} \left[ \nabla_w \cdot \bm E - 4 \pi e \left(\frac{\rho_h}{m} + \sum_k^N w_k \delta(\bm x - \bm X_k) \right) \right] \, \phi_h \, dx$ for all $\phi_h \in \mathring{Q}_{k+1}$, and $\nabla \cdot \bm B_h$ constraint.
\end{proposition}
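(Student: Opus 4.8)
The plan is to prove the four conservation statements one at a time, in each case imitating the corresponding continuum computation from Section~2 but carefully tracking which finite element space each test function must lie in, and exploiting the exact-sequence properties \eqref{eq:prop_Qk}--\eqref{eq:prop_RTk} together with the $L^2$-projections encoded by the bars in \eqref{eq:qqnrt_M} and \eqref{eq:qqnrt_w}.

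\emph{Total masses.} Taking $\phi_h\equiv 1\in Q_{k+1}$ in \eqref{eq:qqnrt_rho} makes the right-hand side vanish, so $\tfrac{d}{dt}\int_{\Omega_h}\rho_h\,dx=0$; the particle mass $\sum_k w_k$ is constant since the weights are not dynamical variables.

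\emph{Total energy.} Following the continuum identity \eqref{eq:energy_rate}, I would test \eqref{eq:qqnrt_rho} with $\phi_h=\bigl(\overline{\gamma(m\bm M_h/\rho_h)-1-\bm M_h\cdot\bm M_h/(\rho_h^2 c^2\gamma(m\bm M_h/\rho_h))}\bigr)c^2\in Q_{k+1}$, test \eqref{eq:qqnrt_M} with $\bm\mu_h=\bm w_h\in Q_{k+1}^d$, test \eqref{eq:qqnrt_E} with $\bm\nu_h=\bm E_h$, test \eqref{eq:qqnrt_B} with $\bm\tau_h=\bm B_h$, and differentiate the particle kinetic energy directly using \eqref{eq:qqnrt_U_k} and $\partial_{\bm u}\gamma=\bm u/(m^2c^2\gamma)$. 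On the diagonal $\bm\mu_h=\bm w_h$ the antisymmetric advection term of \eqref{eq:qqnrt_M} vanishes; the magnetic terms vanish because $(\bm w_h\times\bm B_h)\cdot\bm w_h=0$ and $(\bm U_k\times\bm B_h)\cdot\bm U_k=0$; and the projected density-advection term of \eqref{eq:qqnrt_M} cancels the right-hand side of \eqref{eq:qqnrt_rho}. The step I expect to be the main obstacle to phrase cleanly is the identification of the combined $\rho_h$- and $\bm M_h$-contributions with $\tfrac{d}{dt}\int_{\Omega_h}\rho_h(\gamma(m\bm M_h/\rho_h)-1)c^2\,dx$: this works precisely because $\rho_h(t)\in Q_{k+1}$ and $\bm M_h(t)\in Q_{k+1}^d$ sit in time-independent spaces, so $\partial_t\rho_h\in Q_{k+1}$, $\partial_t\bm M_h\in Q_{k+1}^d$, hence the $L^2$-projections behind $\phi_h$ and $\bm w_h$ may be dropped against these derivatives, and $\int\partial_t\rho_h\,\phi_h$, $\int\partial_t\bm M_h\cdot\bm w_h$ equal the pairings against the unprojected functional derivatives of the discrete fluid energy. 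The chain rule then leaves only the couplings $\pm\tfrac{e}{m}\int_{\Omega_h}\rho_h\bm E_h\cdot\bm w_h\,dx$ and $\pm\tfrac{e}{m}\sum_k w_k\bm U_k\cdot\bm E_h(\bm X_k)/\gamma(\bm U_k)$, which cancel against the Ampère and particle contributions once the field equations are multiplied by $\tfrac{1}{4\pi}$.

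\emph{Weak Gauss' law.} Fixing $\phi_h\in\mathring{Q}_{k+1}$, I would use the definition \eqref{eq:weak_divergence} to write $\tfrac{d}{dt}\int_{\Omega_h}\nabla_w\cdot\bm E_h\,\phi_h\,dx=-\int_{\Omega_h}\partial_t\bm E_h\cdot\nabla\phi_h\,dx$, then test \eqref{eq:qqnrt_E} with $\bm\nu_h=\nabla\phi_h$, which is admissible since $\nabla\phi_h\in\mathring{N}_k$ by \eqref{eq:prop_Qk}; the curl term drops because $\nabla\times\nabla\phi_h=0$. What remains is the fluid current $\tfrac{4\pi e}{m}\int_{\Omega_h}\rho_h\bm w_h\cdot\nabla\phi_h\,dx$, which matches $\tfrac{d}{dt}\bigl(4\pi e\int_{\Omega_h}\tfrac{\rho_h}{m}\phi_h\,dx\bigr)$ via \eqref{eq:qqnrt_rho} with test function $\phi_h$, plus the particle current, which matches $\tfrac{d}{dt}\bigl(4\pi e\sum_k w_k\int_{\Omega_h}\delta(\bm x-\bm X_k)\phi_h\,dx\bigr)$ via Lemma~\ref{prop:particle_mass_conservation}; hence the Gauss residual has vanishing time derivative. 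Finally, for the magnetic constraint I would note that $\partial_t\bm B_h\in\mathring{RT}_k$ and $\nabla\times\bm E_h\in\mathring{RT}_k$ by \eqref{eq:prop_Nk}, so \eqref{eq:qqnrt_B} holds strongly, $\partial_t\bm B_h=-c\,\nabla\times\bm E_h$; taking the divergence and using $\nabla\cdot\nabla\times\bm E_h=0$ yields $\partial_t(\nabla\cdot\bm B_h)=0$, so the constraint is preserved for all time.
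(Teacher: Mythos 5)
Your proposal is correct and follows essentially the same route as the paper's proof: identical test-function choices for mass, energy, and Gauss' law, the same key observation that $\partial_t\rho_h\in Q_{k+1}$ and $\partial_t\bm M_h\in Q_{k+1}^d$ let the $L^2$-projection bars be dropped against the time derivatives, and the same use of Lemma~\ref{prop:particle_mass_conservation} for the particle current. The only (minor, and perfectly valid) deviation is in the $\nabla\cdot\bm B_h$ step, where you observe that $\partial_t\bm B_h+c\,\nabla\times\bm E_h\in\mathring{RT}_k$ so that Faraday's law holds strongly and the divergence can be taken pointwise, whereas the paper tests with $\bm\tau_h=\nabla_w\varphi_h$, $\varphi_h\in\mathring{DG}_k$, and concludes via the weak gradient; both rest on the same exact-sequence properties.
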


\begin{proof}
We take $\phi_h = 1$ in \eqref{eq:qqnrt_rho} to deduce conservation of total fluid mass
\begin{align}
\int_{\Omega_h} \partial_t \rho_h \, dx = 0
\end{align}

The total particle mass $\sum_k^N w_k$ is conserved, since the weights of the  particles are constant and the particles are assumed to be in the domain.

To show the conservation of total energy, we use the chain rule to express the time-rate of the relativistic factor $\gamma$, that was defined in \eqref{eq:gamma},
\begin{align} \label{eq:gamma_rate_m}
\partial_t \gamma(m \bm M_h/ \rho_h) = \frac{\bm M_h}{\gamma(m \bm M_h/ \rho_h) \, \rho_h^2 \, c^2} \cdot \partial_t \bm M_h-\frac{\bm M_h^2}{\gamma(m \bm M_h/\rho_h) \, \rho_h^3 \, c^2} \, \partial_t \rho_h
\end{align}
and 
\begin{align} \label{eq:gamma_rate_u}
\partial_t \gamma(\bm U_k) = \frac{\bm U_k}{m^2 c^2 \gamma(\bm U_k)} \cdot \partial_t \bm U_k
\end{align} 
we can write the energy rate as
\begin{align} \label{eq:energy_rate}
\frac{d}{dt} \mathcal H &= \int_{\Omega_h} \partial_t \rho_h \left( \gamma(m \bm M_h/\rho_h)-1 -\frac{\bm M_h^2}{\rho_h^2 \, c^2 \, \gamma(m \bm M_h/\rho_h)} \right) c^2 \, dx + \int_{\Omega_h} \partial_t \bm M_h \cdot \bm w_h \, dx \\ \notag
&+ \frac{1}{4 \pi} \int_{\Omega_h} \left( \partial_t \bm E_h \cdot \bm E_h + \partial_t \bm B_h \cdot \bm B_h \right) \, dx  + \sum_k^N \frac{w_k \bm U_k}{m \gamma(\bm U_k)} \cdot \partial_t \bm U_k .
\end{align}
Since $\partial_t \rho_h \in Q_{k+1}$, by the definition of the  $L_2$ projection, this can be also written as
\begin{align} \label{eq:energy_rate}
\frac{d}{dt} \mathcal H &= \int_{\Omega_h} \partial_t \rho_h \left( \overline{ \gamma(m \bm M_h/\rho_h)-1 -\frac{\bm M_h^2}{\rho_h^2 \, c^2 \, \gamma(m \bm M_h/\rho_h)} } \right) c^2 \, dx + \int_{\Omega_h} \partial_t \bm M_h \cdot \bm w_h \, dx \\ \notag
&+ \frac{1}{4 \pi} \int_{\Omega_h} \left( \partial_t \bm E_h \cdot \bm E_h + \partial_t \bm B_h \cdot \bm B_h \right) \, dx  + \sum_k^N \frac{w_k \bm U_k}{m \gamma(\bm U_k)} \cdot \partial_t \bm U_k
\end{align}
To show that the energy rate is zero, we take $\phi_h = \left( \overline{ \gamma(m \bm M_h/ \rho_h) -1- \frac{\bm M \cdot \bm M}{\rho_h^2 c^2 \gamma(m \bm M_h/ \rho_h)} } \right) c^2$ in the density equation \eqref{eq:qqnrt_rho}, $\bm \mu_h= \bm w_h$ in the momentum equation \eqref{eq:qqnrt_M}, $\bm \nu_h=\bm E_h$ in  Ampere's equation \eqref{eq:qqnrt_E}, and $\bm \tau_h = \bm B_h$ in Faraday's equation \eqref{eq:qqnrt_B}, and arrive at
\begin{align}
\frac{d}{dt} \mathcal H = -\frac{e}{m} \sum_k^N \frac{w_k \, \bm U_k}{\gamma(\bm U_k)} \cdot \bm E_h(\bm X_k) +  \sum_k^N \frac{w_k \bm U_k}{m \gamma(\bm U_k)} \cdot \partial_t \bm U_k 
\end{align}
expressing the time rate of the particles' momentum with \eqref{eq:qqnrt_U_k}, we deduce the conservation of energy $\frac{d}{dt} \mathcal H=0$.

Next, we consider ´Ampere's equation \eqref{eq:qqnrt_E} and set the test function $\bm \nu_h \in \mathring N_k$ to $\nabla \phi_h$ with $\phi_h \in \mathring Q_{k+1}$. This is a legitimate choice, since according to \eqref{eq:prop_Qk}, $\nabla \phi_h \in \mathring N_k$.
\begin{align} \label{eq:prop_eq_e}
 & \int_{\Omega_h} \partial_t \bm E_h \cdot \nabla \phi_h \, dx = c \int_{\Omega_h} \bm B_h \cdot (\nabla \times \nabla \phi_h) \, dx - \frac{4 \pi e }{m} \int_{\Omega_h}   \rho_h \bm w_h \cdot \nabla \phi_h  \, dx \\ \notag
& \hspace{2.5cm} - \frac{4 \pi e}{m} \sum_k^{N} \frac{w_k \bm U_k}{\gamma(\bm U_k)}  \cdot \nabla \phi_h(\bm X_k)  \hspace{1cm} \forall \phi_h \in \mathring Q_{k+1}
\end{align}
On the left-hand side, we use the weak gradient definition \eqref{eq:weak_divergence}. On the right-hand side, we use the property $\nabla \times \nabla \phi_h=0$, see \eqref{eq:prop_Qk}, to get rid of the curl term. Then, we use the density equation \eqref{eq:qqnrt_rho} and express the second term in terms of the density rate. This is possible, since the density equation holds for all test functions in $Q_{k+1}$ and therefore for all the test functions in $\mathring Q_{k+1} \subset Q_{k+1}$ as well.
\begin{align} \label{eq:prop_eq_e}
 & \int_{\Omega_h} \partial_t (\nabla_w \cdot \bm E_h) \phi_h \, dx =  \frac{4 \pi e }{m} \int_{\Omega_h}  \partial_t \rho_h \phi_h  \, dx  + \frac{4 \pi e}{m} \sum_k^{N} \frac{w_k \bm U_k}{\gamma(\bm U_k)}  \cdot \nabla \phi_h(\bm X_k)  \hspace{1cm} \forall \phi_h \in \mathring Q_{k+1}
\end{align}
The particles' contribution on the right-hand side of \eqref{eq:prop_eq_e} represents the current due to the particles that we rewrite using Lemma \ref{prop:particle_mass_conservation}. Then the Ampere's equation \eqref{eq:prop_eq_e}, reads
\begin{align} \label{eq:prop_weak_gauss_law}
 & \int_{\Omega_h} \partial_t \left( \nabla_w \cdot \bm E_h - 4 \pi e \left(\frac{\rho_h}{m} + \sum_k^N w_k \delta(\bm x - \bm X_k)  \right) \right) \phi_h \, dx = 0\hspace{1cm} \forall \phi_h \in \mathring Q_{k+1}
\end{align}
and we see that the weak Gauss law \eqref{eq:prop_weak_gauss_law} is preserved.

Next, in the Faraday's equation \eqref{eq:qqnrt_B}, we take $\bm \tau_h = \nabla_w \varphi_h$ with $\varphi_h \in \mathring{DG}_k$. This is a legitimate choice, since $\nabla_w \varphi_h \in \mathring{RT}_k$ according to the weak gradient definition in \eqref{eq:weak_gradient}.
\begin{align} \label{eq:prop_divB}
\int_{\Omega_h} \partial_t \bm B_h \, \cdot \nabla_w \varphi_h \, dx = - \int_{\Omega_h} \nabla_w \varphi_h \cdot \nabla \times \bm E_h \hspace{1cm} \forall \varphi_h \in \mathring{DG}_k
\end{align}
By the property \eqref{eq:prop_Nk}, $\nabla \times \bm E_h \in \mathring {RT}_k$, and we can use the definition of the weak gradient \eqref{eq:weak_gradient} on the right-hand side to obtain $\int_{\Omega} \varphi_h \nabla \cdot \nabla \times \bm E_h \, dx$. This term vanishes due to property \eqref{eq:prop_Nk}. Applying the definition of the weak gradient to the left-hand side of \eqref{eq:prop_divB}, we deduce $ \int_{\Omega} \partial_t \nabla \cdot \bm B_h  \varphi_h \, dx =0$ for all $\varphi_h \in \mathring{DG}_k$.  Since $ \partial_t \nabla \cdot \bm B_h \in \mathring{DG}_k$ by the property \eqref{eq:prop_RTk}, we set $\varphi_h= \partial_t \nabla \cdot \bm B_h$ and deduce $\|\partial_t \nabla \cdot \bm B_h\|^2_2=0 \iff \partial_t \nabla \cdot \bm B_h = 0$. Which implies that $\nabla \cdot \bm B_h$ is preserved.
\end{proof}

\subsection{A formulation with fluxes}
Our second formulation is based on the choice $\rho_h \in DG_{k+1}, \bm M_h \in RT_k, \bm E_h \in \mathring N_k, \bm B_h \in \mathring{RT}_k$. Just like the discretization (\ref{eq:qqnrt_rho}-\ref{eq:qqnrt_U_k}), this discretization preserves total mass, total energy, weak Gauss law, and divB constraint. The main difference is that the density approximation belongs to $DG_{k+1}$ and is only piecewise continuous, whereas the momentum approximation belongs to $RT_k$ and has only its normal component continuous. Due to the discontinuities, it is necessary to consider fluxes over element boundaries. Since we are deriving our spatial discretization from the Poisson bracket, we have to take discontinuities into account in the bracket. For this reason, we consider integrals over each cell $K$ in the bracket and apply partial integration to the terms containing gradient and curl operators. Considering the discontinuities, we keep boundary terms which yields the following bracket:
\begin{align}\label{eq:bracket_with_boundary}
\{ \mathcal F, \mathcal G \}_K & = \int_K \left[ \frac{\delta \mathcal{G}}{ \delta \bm M} \cdot \nabla \frac{\delta \mathcal{F}}{\delta \bm M} -\frac{\delta \mathcal{F}}{ \delta \bm M} \cdot \nabla \frac{\delta \mathcal{G}}{\delta \bm M}\right] \cdot \bm M \, dx + \int_K \left[ \frac{\delta \mathcal{G}}{\delta \bm M} \cdot \nabla \frac{\delta \mathcal{F} }{\delta \rho} - \frac{\delta \mathcal{F}}{\delta \bm M} \cdot \nabla \frac{\delta \mathcal{G} }{\delta \rho}  \right] \, \rho \, dx\\ \label{eq:density_sub_bracket}
&+ \frac{4 \pi e}{m} \int_K \left[ \frac{\delta \mathcal{F}}{ \delta \bm M} \cdot \frac{\delta \mathcal{G} }{\delta \bm E}-\frac{\delta \mathcal{G} }{\delta \bm M} \cdot \frac{\delta \mathcal{F}}{\delta \bm E}   \right] \, \rho \, dx + \frac{e}{mc} \int_K \rho \bm B \cdot \left( \frac{\delta \mathcal{F}}{\delta \bm M} \times \frac{\delta \mathcal{G}}{\delta \bm M} \right) \, dx\\ \label{eq:coupling_sub_bracket}
&+ \sum_k^N \frac{1}{w_k} \left[ \frac{\partial \mathcal F}{\partial \bm X_k} \cdot \frac{\partial \mathcal G}{\partial \bm U_k} - \frac{\partial \mathcal G}{\partial \bm X_k} \cdot \frac{\partial \mathcal F}{\partial \bm U_k} \right] + 4 \pi \, e \sum_k^N \left. \left[ \frac{\partial \mathcal F}{\partial \bm U_k} \cdot \frac{\delta \mathcal G}{\delta \bm E} -  \frac{\partial \mathcal G}{\partial \bm U_k} \cdot \frac{\delta \mathcal F}{\delta \bm E} \right] \right\vert_{(\bm X_k, \bm U_k)}\\ \label{eq:maxwell_sub_bracket}
& + \frac{e}{c} \sum_k^N \frac{1}{w_k} \bm B(\bm X_k, t) \cdot \left( \frac{\partial \mathcal F}{\partial \bm U_k} \times \frac{\partial \mathcal G}{\partial \bm U_k} \right) 
+ 4 \pi c \int_K \left[ \frac{\delta \mathcal G}{\delta \bm B} \cdot \left( \nabla \times \frac{\delta \mathcal F}{\delta \bm E} \right)  -  \frac{\delta \mathcal F}{\delta \bm B} \cdot \left( \nabla \times \frac{\delta \mathcal G}{\delta \bm E} \right) \right] dx \\ \label{eq:m_boundary}
&  - \int_{\partial K} \left( \frac{\delta \mathcal G}{\delta \bm M} \cdot \bm n\right) \left( \frac{\delta \mathcal F}{\delta \bm M} \cdot \bm M \right)  \, ds + \int_{\partial_K} \left( \frac{\delta \mathcal F}{\delta \bm M} \cdot \bm n \right)\left( \frac{\delta \mathcal G}{\delta \bm M} \cdot \bm M \right)  \, ds  \\ \label{eq:density_boundary}
&  - \int_{\partial K}  \left( \rho  \frac{\delta \mathcal G}{\delta \bm M}  \right) \cdot \left( \frac{\delta \mathcal F}{\delta \rho } \bm n \right)  \, ds + \int_{\partial K}  \left( \rho \frac{\delta \mathcal G}{\delta \rho } \bm n \right) \cdot \left( \frac{\delta \mathcal F}{\delta \bm M} \right)  \, ds   \\ \label{eq:maxwell_boundary}
& - 4 \pi c \int_{\partial K} \left[ \frac{\delta \mathcal G}{\delta \bm B} \cdot \left( \bm n \times \frac{\delta F}{\delta \bm E} \right) \right] \, ds
+ 4 \pi c \int_{\partial K} \left[ \frac{\delta \mathcal F}{\delta \bm B} \cdot \left( \bm n \times \frac{\delta G}{\delta \bm E} \right) \right] \, ds
\end{align}
Lines (\ref{eq:m_boundary}-\ref{eq:maxwell_boundary}) contain the boundary terms that we added. We use the vector calculus identity $(\bm a \cdot \bm c)(\bm b \cdot \bm d)-(\bm b \cdot \bm c)(\bm a \cdot \bm d)=(\bm a \times \bm b) \cdot (\bm c \times \bm d)$ and rewrite the terms in line \eqref{eq:m_boundary} as follows
\begin{align}
\int_{\partial K} \left( \frac{\delta \mathcal F}{\delta \bm M}  \cdot \bm n \right) \left( \frac{\delta \mathcal G}{\delta \bm M} \cdot \bm M \right) \, ds -\int_{\partial K} \left( \frac{\delta \mathcal G}{\delta \bm M}  \cdot \bm n \right) \left( \frac{\delta \mathcal F}{\delta \bm M} \cdot \bm M \right) \, ds =  \int_{\partial K} \left( \bm n \times \bm M \right) \cdot \left( \frac{\delta \mathcal F}{\delta \bm M} \times \frac{\delta G}{\delta \bm M} \right) \, ds
\end{align}
Next, we assume time-independent scalar and vector fields $\phi(\bm x)$ and $\bm \mu(\bm x), \bm \nu(\bm x), \bm \tau(\bm x)$ that are smooth on element $K$. We consider functionals
\begin{align} \label{eq:functionals_for_weak_form_fluxes}
\int_{K} \rho \, \phi \, dx \hspace{0.5cm} \int_{K} \bm M \cdot \bm \mu \, dx \hspace{0.5cm} \int_{K} \bm E \cdot \bm \nu \, dx \hspace{0.5cm} \int_{K} \bm B \cdot \bm \tau \, dx 
\end{align}
In analogy to the derivation of the system (\ref{eq:weak_rho}-\ref{eq:U_k}), we use these functionals, the bracket (\ref{eq:bracket_with_boundary}-\ref{eq:maxwell_boundary}), and the Hamiltonian \eqref{eq:relativistic_hamiltonian_hybrid_degree_zero_conserved} to derive the following weak formulation of the equations of motion
\begin{align} 
\label{eq:rel_weak_form_rho_original}
& \int_{K} \partial_t \rho \, \phi = \int_{K} \rho \bm w \cdot \nabla \phi -  \int_{\partial K} \left( \rho \bm w \right) \cdot \left( \phi  \bm n \right) \\ \label{eq:rel_weak_form_M_original}
& \int_{K} \partial_t \bm M \cdot \bm \mu \, dx = \int_{K} \left( \bm w \cdot \nabla \bm \mu -\bm \mu \cdot \nabla \bm w \right) \cdot \bm M \, dx \\ \notag
& \hspace{2.5cm} - \int_{K} \rho \, \bm \mu \cdot \nabla \left( \gamma(m \bm M/ \rho) -1- \frac{\bm M^2}{\rho^2 c^2 \gamma(m \bm M/ \rho)} \right) c^2 \, dx \\ \notag
& \hspace{2.5cm}  + \frac{e}{m} \int_{K} \rho \left( \bm E + \frac{\bm M}{\rho \gamma(m \bm M/ \rho)} \times \frac{\bm B}{c} \right)  \cdot \bm \mu \, dx \\ \notag
&\hspace{2.5cm} + \int_{\partial K} \left( \bm n \times \bm M \right) \cdot \left( \bm \mu \times \bm w \right) \, ds\\ \notag
&\hspace{2.5cm} + \int_{\partial K} \left( \rho \left[ \gamma(m \bm M/ \rho) -1- \frac{\bm M^2}{\rho^2 c^2 \gamma(m \bm M/ \rho)}  \right] c^2 \right) \left( \bm \mu \cdot \bm n \right) ds \\ \label{eq:rel_weak_form_E_original}
& \int_{K} \partial_t \bm E \cdot \bm \nu \, dx = c \int_{K} \bm \nu \cdot (\nabla \times \bm B) \, dx - \frac{4 \pi e }{m} \int_{K}  \left( \rho \bm w \cdot \bm \nu \right) \, dx \\ \notag
& \hspace{2.5cm} - \frac{4 \pi e}{m} \sum_k^{N_K} \frac{w_k \bm U_k}{\gamma(\bm U_k)}  \cdot \bm \nu(\bm X_k) \, dx  -c \int_{\partial K} \bm B \cdot \left( \bm n \times \bm \nu \right) \, ds
\\ \label{eq:rel_weak_form_B_original}
& \int_{K} \partial_t \bm B \cdot \bm \tau \, dx = - c \int_{K} \bm E \cdot (\nabla \times \bm \tau ) \, dx + c \int_{\partial K} \bm \tau \cdot \left( \bm n \times \bm E \right) \, ds \\ \label{eq:rel_weak_form_w_original}
& \text{with } \bm w = \frac{\bm M}{\rho \gamma(m \bm M/\rho)}
\end{align}
And equations for the particles 
\begin{align} \label{eq:appendix_X_k}
&\partial_t \bm X_k(t) = \frac{\bm U_k}{m \gamma(\bm U_k)}\\ \label{eq:appendix_U_k}
&\partial_t \bm U_k(t) = e \left( \bm E(\bm X_k, t)+ \frac{\bm U_k \times \bm B(\bm X_k, t)}{cm \gamma(\bm U_k)} \right) 
\end{align}

Next, we replace the boundary terms over $\partial K$ with fluxes that enforce continuity weakly across edges $e$
\begin{align} \label{eq:weak_flux_1}
 &\sum_{K \in \mathcal{T}_h} \int_{\partial K} \left( \rho \bm w \right) \cdot \left( \phi  \bm n \right) \longrightarrow  \sum_{K \in \mathcal{T}_h} \int_{\partial K} \left( \rho \bm w \right)^{1*} \cdot \left( \phi  \bm n \right) =  \sum_{K \in \mathcal{T}_h} \int_{\partial K} \left( \rho \bm w \right)^{1*} \cdot \llbracket \phi \rrbracket \\ \notag
 &\sum_{K \in \mathcal{T}_h} \int_{\partial K} \left( \bm n \times \bm M \right) \cdot \left( \bm \mu \times \bm w  \right) \, ds  \longrightarrow  \sum_{K \in \mathcal{T}_h} \int_{\partial K} \left( \bm n \times \bm M \right)^{2*} \cdot \left( \bm \mu \times \bm w \right) = \\\label{eq:weak_flux_2}
& \hspace{6cm}  = \sum_{e \in \mathcal{E}_h} \int_{e} \left( \bm n \times (\bm M_h)^{2*} \right) \cdot \left \llbracket \bm \mu_h \times \bm w \right \rrbracket \, ds \\ \label{eq:weak_flux_3}
& \sum_{K \in \mathcal{T}_h} \int_{\partial K} \bm B \cdot (\bm n \times \bm \nu) ds \longrightarrow  \sum_{K \in \mathcal{T}_h} \int_{\partial K} \bm B \cdot (\bm n \times \bm \nu) ds  =  \sum_{e \in \mathcal{E}_h} \int_{\partial K} \bm B^{3*} \cdot \llbracket \bm n \times \bm \nu \rrbracket ds 
\end{align}
and 
\begin{align} \notag
&\sum_{K \in \mathcal{T}_h} \int_{\partial K} \rho\,  \bm \mu \cdot \bm n \left[  \gamma(m \bm M/ \rho) -1- \frac{\bm M^2}{\rho^2 c^2 \gamma(m \bm M/ \rho)} \right] c^2 \,  ds \\ \notag
& \hspace{2cm} \longrightarrow \sum_{K \in \mathcal{T}_h} \int_{\partial K} \rho\,  \bm \mu \cdot \bm n \left[  \gamma(m \bm M/ \rho) -1- \frac{\bm M^2}{\rho^2 c^2 \gamma(m \bm M/ \rho)} \right] c^2 \,  ds =  \\ \label{eq:weak_flux_4}
& \hspace{2cm} = \sum_{e \in \mathcal{E}_h} \int_{e}  \left(\rho \bm \mu \right)^{1*} \cdot \left \llbracket  \gamma(m \bm M/ \rho) -1- \frac{\bm M^2}{\rho^2 c^2 \gamma(m \bm M/ \rho)}  \right \rrbracket c^2 \, ds \\ \label{eq:weak_flux_5}
&\sum_{K \in \mathcal{T}_h} \int_{\partial K} \bm \tau \cdot \left( \bm n \times \bm E \right) \implies \sum_{e \in \mathcal{E}_h } \int_e (\bm \tau)^{3*} \cdot \left \llbracket \bm n \times \bm E \right \rrbracket 
\end{align}
Since we would consider $\bm \nu_h \in \mathring{N}_k$ and $\bm E_h \in \mathring{N}_k$, we omit the terms that vanish for the upcoming choice of function spaces: $\sum_{e \in \mathcal{E}_h} \int_{\partial K} \bm B^{3*} \cdot \llbracket \bm n \times \bm \nu \rrbracket ds$ and $\sum_{e \in \mathcal{E}_h } \int_e (\bm \tau)^{3*} \cdot \left \llbracket \bm n \times \bm E \right \rrbracket $. For the remaining fluxes, we use the following upwind flux denoted by $(\cdot)^*$ 
\begin{align}
(\cdot)^* = \{\cdot\} + \frac{1}{2} \frac{\bm M \cdot \bm n}{|\bm M \cdot \bm n|} \llbracket \cdot \rrbracket
\end{align}

The element local weak formulation (\ref{eq:rel_weak_form_rho_original}-\ref{eq:particle_U_k}) then leads to global weak formulation with weak continuity between the elements according to (\ref{eq:weak_flux_1}-\ref{eq:weak_flux_5}).

Inserting our discrete fields, this yields the following semi-discrete system of equations.
\begin{align} 
\label{eq:dgdiv_rho}
& \int_{\Omega_h} \partial_t \rho_h \, \phi_h = \int_{\Omega_h} \rho_h \bm w_h \cdot \nabla \phi_h -  \sum_{e \in \mathcal E_h} \int_{e} \left( \rho_h \bm w_h \right)^{*} \cdot \llbracket \phi_h  \rrbracket \hspace{1cm} \forall \phi_h \in DG_{k+1}  \\ \label{eq:dgdiv_m}
& \int_{\Omega_h} \partial_t \bm M_h \cdot \bm \mu_h \, dx = \int_{\Omega_h} \left( \bm w_h \cdot \nabla \bm \mu_h -\bm \mu_h \cdot \nabla \bm w_h \right) \cdot \bm M_h \, dx \\ \notag
& \hspace{2.5cm} - \int_{\Omega_h} \rho_h \, \bm \mu_h \cdot \nabla \left( \overline{ \gamma(m \bm M_h/ \rho_h) -1- \frac{\bm M_h^2}{c^2 \, \rho_h^2  \gamma(m \bm M_h/ \rho_h)} } \right) c^2 \, dx \\ \notag
& \hspace{2.5cm}  + \frac{e}{m} \int_{\Omega_h} \rho_h \left( \bm E_h + \bm w_h \times \frac{\bm B_h}{c} \right)  \cdot \bm \mu_h \, dx \\ \notag
&\hspace{2.5cm} + \sum_{e \in \mathcal E_h} \int_{e } \left( \bm n \times \bm M_h \right)^{*} \cdot \llbracket \bm \mu_h \times \bm w_h \rrbracket \, ds\\ \notag
&\hspace{2.5cm} + \sum_{e \in \mathcal{E}_h} \int_{e}  \left(\rho_h \bm \mu_h \right)^{*} \cdot \left \llbracket \overline{ \gamma(m \bm M_h/ \rho_h) -1- \frac{\bm M_h^2}{c^2 \, \rho_h^2 \gamma(m \bm M_h/ \rho_h)} } \right \rrbracket c^2 \, ds \hspace{0.5cm}\forall \bm \mu_h \in RT_k\\ \label{eq:dgdiv_E}
& \int_{\Omega_h} \partial_t \bm E_h \cdot \bm \nu_h \, dx = c \int_{\Omega_h} \bm B_h \cdot (\nabla \times \bm \nu_h) \, dx - \frac{4 \pi e }{m} \int_{\Omega_h}  \left( \rho_h \bm w_h \cdot \bm \nu_h \right) \, dx \\ \notag
& \hspace{7cm} - \frac{4 \pi e}{m} \sum_k^{N_K} \frac{w_k \bm U_k}{\gamma(\bm U_k)}  \cdot \bm \nu_h(\bm X_k)  \hspace{1cm} \bm \nu_h \in \mathring{N}_k
\\ \label{eq:dgdiv_B}
& \int_{\Omega_h} \partial_t \bm B_h \cdot \bm \tau_h \, dx = - c \int_{\Omega_h} \bm \tau_h \cdot (\nabla \times \bm E_h) \, dx \hspace{1cm} \forall \bm \tau_h \in \mathring{RT}_k \\ \label{eq:dgdiv_w}
& \text{with } \bm w_h = \overline{\frac{\bm M_h}{\rho_h \gamma(m \bm M_h/\rho_h)}}
\end{align}
\begin{align} \label{eq:dgdiv_X_k}
&\partial_t \bm X_k(t) = \frac{\bm U_k}{m \gamma(\bm U_k)} \hspace{0.5cm} \text{for } \bm X_k \in \Omega_h\\ \label{eq:dgdiv_U_k}
&\partial_t \bm U_k(t) = e \left( \bm E_h(\bm X_k, t)+ \frac{\bm U_k \times \bm B_h(\bm X_k, t)}{cm \gamma(\bm U_k)} \right) 
\end{align}
Here, the overbar in the velocity \eqref{eq:dgdiv_w} denotes the $L^2(\Omega)$-projection onto $RT_k$, and the overbars in the momentum equation \eqref{eq:dgdiv_m} denote the $L^2(\Omega)$-projection onto $DG_{k+1}$. 

\begin{proposition} \label{prop:dgdiv_proof}
The scheme (\ref{eq:dgdiv_rho}-\ref{eq:dgdiv_U_k}), conserves total fluid mass $\int_{\Omega_h} \rho \, dx$, total particle mass $\sum_k^N w_k$, total energy $\int_{\Omega_h} \rho_h (\gamma(m \bm M_h/ \rho_h)-1) c^2 dx + \sum_k^N w_k (\gamma(\bm U_k)-1)m c^2 +\frac{1}{8 \pi} \int_{\Omega_h} \left( \bm E_h^2 + \bm B_h^2 \right) dx$, weak Gauss law $ \int_{\Omega_h} \left[ \nabla_w \cdot \bm E_h - 4 \pi e \left(\frac{\rho_h}{m} + \sum_k^N w_k \delta(\bm x - \bm X_k) \right) \right] \, \phi_h \, dx$ for all $\phi_h \in \mathring Q_{k+1}$, and $\nabla \cdot \bm B_h$ constraint.
\end{proposition}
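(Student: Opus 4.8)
The plan is to follow the blueprint of the proof of Proposition~\ref{prop:h1h1_proof}, the only genuinely new ingredient being the bookkeeping of the element-boundary fluxes introduced in \eqref{eq:weak_flux_1}--\eqref{eq:weak_flux_5}. Total fluid mass: take $\phi_h=1$ in \eqref{eq:dgdiv_rho}; the volume term vanishes because $\nabla 1=0$ and the flux term vanishes because $\llbracket 1\rrbracket=0$, so $\int_{\Omega_h}\partial_t\rho_h\,dx=0$. Total particle mass is constant since the weights $w_k$ are fixed and the particles stay in $\Omega_h$, exactly as before.

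For the energy, let $\alpha_h$ denote the $L^2(\Omega)$-projection onto $DG_{k+1}$ of $\gamma(m\bm M_h/\rho_h)-1-\bm M_h^2/(\rho_h^2c^2\gamma(m\bm M_h/\rho_h))$. Since $\partial_t\rho_h\in DG_{k+1}$, the energy-rate identity used in the proof of Proposition~\ref{prop:h1h1_proof} may be rewritten with $\alpha_h$ in place of the unprojected quantity. I would then take $\phi_h=c^2\alpha_h\in DG_{k+1}$ in \eqref{eq:dgdiv_rho}, $\bm\mu_h=\bm w_h\in RT_k$ in \eqref{eq:dgdiv_m}, $\bm\nu_h=\bm E_h$ in \eqref{eq:dgdiv_E}, $\bm\tau_h=\bm B_h$ in \eqref{eq:dgdiv_B}, and add the four identities. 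The volume contributions cancel exactly as in Proposition~\ref{prop:h1h1_proof}: the two advective terms in \eqref{eq:dgdiv_m} cancel against each other, the term $\int_{\Omega_h}\rho_h\bm w_h\cdot\nabla(c^2\alpha_h)$ from \eqref{eq:dgdiv_rho} cancels the pressure-gradient term of \eqref{eq:dgdiv_m}, $\bm w_h\cdot(\bm w_h\times\bm B_h/c)=0$, and the $c\int_{\Omega_h}\bm B_h\cdot(\nabla\times\bm E_h)$ terms from \eqref{eq:dgdiv_E} and \eqref{eq:dgdiv_B} cancel. For the surface terms, observe that the flux $\sum_{e}\int_e(\bm n\times\bm M_h)^{*}\cdot\llbracket\bm w_h\times\bm w_h\rrbracket$ in \eqref{eq:dgdiv_m} vanishes identically since $\bm w_h\times\bm w_h=0$ on each side of every edge, and that $\bm\mu_h=\bm w_h$ gives $(\rho_h\bm\mu_h)^{*}=(\rho_h\bm w_h)^{*}$, so the flux $c^2\sum_e\int_e(\rho_h\bm w_h)^{*}\cdot\llbracket\alpha_h\rrbracket$ of \eqref{eq:dgdiv_m} cancels the flux $-c^2\sum_e\int_e(\rho_h\bm w_h)^{*}\cdot\llbracket\alpha_h\rrbracket$ produced by \eqref{eq:dgdiv_rho} with $\phi_h=c^2\alpha_h$; recall that \eqref{eq:dgdiv_E} and \eqref{eq:dgdiv_B} carry no flux terms for $\bm\nu_h,\bm E_h\in\mathring N_k$. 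What remains is $\frac{d}{dt}\mathcal H=-\tfrac{e}{m}\sum_k\tfrac{w_k\bm U_k}{\gamma(\bm U_k)}\cdot\bm E_h(\bm X_k)+\sum_k\tfrac{w_k\bm U_k}{m\gamma(\bm U_k)}\cdot\partial_t\bm U_k$, which vanishes after inserting \eqref{eq:dgdiv_U_k} and using $\bm U_k\times\bm B_h(\bm X_k)\perp\bm U_k$, as in Proposition~\ref{prop:h1h1_proof}.

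For the weak Gauss law and the divB constraint, the arguments carry over from Proposition~\ref{prop:h1h1_proof} with one observation each. Since $\mathring Q_{k+1}\subset Q_{k+1}\subset DG_{k+1}$, equation \eqref{eq:dgdiv_rho} is valid for every $\phi_h\in\mathring Q_{k+1}$, and for such (continuous) $\phi_h$ the edge flux $\sum_e\int_e(\rho_h\bm w_h)^{*}\cdot\llbracket\phi_h\rrbracket$ vanishes, so on $\mathring Q_{k+1}$ equation \eqref{eq:dgdiv_rho} reduces to the same relation used in the flux-free case; the rest is then verbatim, setting $\bm\nu_h=\nabla\phi_h$ (legitimate by \eqref{eq:prop_Qk}) in \eqref{eq:dgdiv_E}, using $\nabla\times\nabla\phi_h=0$, rewriting the fluid current via \eqref{eq:dgdiv_rho}, rewriting the particle current via Lemma~\ref{prop:particle_mass_conservation}, and applying \eqref{eq:weak_divergence} to obtain $\int_{\Omega_h}\partial_t\big(\nabla_w\cdot\bm E_h-4\pi e(\rho_h/m+\sum_kw_k\delta(\bm x-\bm X_k))\big)\phi_h\,dx=0$ for all $\phi_h\in\mathring Q_{k+1}$. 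Faraday's equation \eqref{eq:dgdiv_B} coincides with \eqref{eq:qqnrt_B} (same space $\mathring{RT}_k$, no flux), so the divB argument is unchanged: take $\bm\tau_h=\nabla_w\varphi_h$ with $\varphi_h\in\mathring{DG}_k$, use \eqref{eq:prop_Nk}, \eqref{eq:prop_RTk} and \eqref{eq:weak_gradient} to get $\|\partial_t\nabla\cdot\bm B_h\|_2^2=0$, hence $\partial_t\nabla\cdot\bm B_h=0$.

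The main obstacle is the surface-term bookkeeping in the energy step: one must confirm that, with the choices $\phi_h=c^2\alpha_h$ and $\bm\mu_h=\bm w_h$, \emph{every} flux added in \eqref{eq:weak_flux_1}--\eqref{eq:weak_flux_5} either vanishes pointwise (the $\bm w_h\times\bm w_h$ flux) or is matched by an equal and opposite flux from another equation (the $(\rho_h\bm w_h)^{*}$ fluxes of the density and momentum equations). This is the only computation not already contained in Proposition~\ref{prop:h1h1_proof}; everything else, including the cancellation of the Maxwell terms and of the particle power against $\partial_t\bm U_k$, is identical.
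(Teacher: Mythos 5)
Your proposal is correct and follows essentially the same route as the paper's proof: reduce to the argument of Proposition~\ref{prop:h1h1_proof} and check that the added edge fluxes either vanish ($\llbracket \bm w_h \times \bm w_h \rrbracket = 0$ for the energy, $\llbracket \phi_h \rrbracket = 0$ for $\phi_h = 1$ and for continuous $\phi_h \in \mathring Q_{k+1}$) or cancel in pairs (the $(\rho_h \bm w_h)^{*}$ fluxes of the density and momentum equations under the choices $\phi_h = c^2 \alpha_h$, $\bm\mu_h = \bm w_h$). Your write-up is in fact more explicit about the flux bookkeeping than the paper's own proof, which states these cancellations without displaying them.
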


\begin{proof}
The proof proceeds as in Proposition \ref{prop:h1h1_proof} but also considering the flux terms. The total mass is still preserved, since the flux term in the density equation \eqref{eq:dgdiv_rho} vanishes for $\phi_h=1$.
Energy conservation is not affected, since in the momentum equation \eqref{eq:dgdiv_m} the flux involving $\llbracket \bm \mu_h \times \bm w_h \rrbracket$ vanishes for $\bm \mu_h = \bm w_h$. The flux in the density equation \eqref{eq:dgdiv_rho} cancels with the second flux in the momentum equation \eqref{eq:dgdiv_m} when choosing the test function $\phi_h = \left( \overline{ \gamma(m \bm M_h/ \rho_h) -1- \frac{\bm M_h^2}{c^2 \, \rho_h^2  \gamma(m \bm M_h/ \rho_h)} } \right) c^2 $ and $\bm \mu_h=\bm w_h$. The weak Gauss law holds, since $\mathring Q_{k+1} \subset DG_{k+1}$ and we may take $\phi_h \in \mathring Q_{k+1}$. The divB constraint is not affected.
\end{proof}

\section{Temporal discretization} \label{sec:time_integrators}

In this section, we discuss two temporal discretizations for the systems (\ref{eq:qqnrt_rho}-\ref{eq:qqnrt_U_k}) and (\ref{eq:dgdiv_rho}-\ref{eq:dgdiv_U_k}). First, we propose an implicit scheme derived from the average vector field method that preserves all the original invariants of the semi-discrete methods. Then, we look at the invariants of a high-order explicit strong-stability-preserving Runge--Kutta scheme. 

\subsection{An energy-conserving implicit scheme}\label{sec:avf}

\vspace{2.5mm}
\noindent \textbf{Average Vector Field (AVF) gradient.}

Let us first review the average vector field (AVF) method \cite{average_vector} that we use for our time-discretization. For this we consider a vector $\bm u^{k}$ of the discrete dynamic variables at time step $k$ and the discrete Hamiltonian function $H(\bm u^{k})$. Moreover, we define a continuous in time solution based on a linear interpolation between two consecuitive time steps and denote it, for $\alpha \in [0,1]$, by
\begin{align} \label{eq:alpha_interpolation}
\bm u^{k,\alpha} = (1-\alpha) \, \bm u^{k+1} + \alpha \, \bm u^{k}.
\end{align}

The main property of the AVF gradient is that for a function $H(\bm u)$, depending on the vector of degrees of freedom $\bm u$, the difference  $H(\bm u^{k+1})-H({\bm u}^{k})$ can be expressed in terms of the derivative of $H(\bm u)$ and the difference of the degrees of freedom, i.e.
\begin{align} \notag
\frac{H(\bm u^{k+1})-H({\bm u}^{k})}{\Delta t} &= - \frac{1}{\Delta t} \int_0^1 \frac{dH \left((1-\xi) {\bm u}^{k+1}+\xi {\bm u}^k \right)}{d \xi} \, d \xi \\ \notag
&= \int_0^1 \frac{\partial H}{\partial {\bm u}}\left( (1-\xi) {\bm u}^{k+1}+\xi  {\bm u}^k \right) \cdot \frac{\left( {\bm u}^{k+1}-{\bm u}^{k}  \right)}{\Delta t} \, d \xi \\ \label{eq:avf_energy}
&= \int_0^1 \frac{\partial H}{\partial {\bm u}}\left( {\bm u}^{\xi} \right) \, d \xi \cdot \frac{\left( {\bm u}^{k+1}-{\bm u}^{k}\right)}{\Delta t}
\end{align}
here the integral $\int_0^1 \frac{\partial H}{\partial {\bm u}}\left( {\bm u}^{\xi} \right) \, d \xi$ performs some averaging of the derivative.

A system of ordinary differential equations in the form $\partial_t \bm u = \bm J(\bm u) \cdot \frac{\partial H}{\partial \bm u}$ is discretized with the AVF gradient as follows
\begin{align} \label{eq:avf_discretization}
\frac{\bm u^{k+1}-\bm u^k}{\Delta t} = \bm J(\bm u^{k},\bm u^{k+1}) \cdot \int_{0}^{1} \frac{\partial H}{\partial \bm u} (\bm u^{\xi}) \, d \xi
\end{align}

If $\bm J(\bm u^{k},\bm u^{k+1})$ is antisymmetric, i.e., $\bm J^\top = - \bm J$, then energy is conserved. This is checked by plugging \eqref{eq:avf_discretization} in \eqref{eq:avf_energy} and using the antisymmetry property of $\bm J$.

The AVF gradient can also be applied to functionals. Let $\mathcal H[\bm u_h]$ be a functional of the function $\bm u_h$. The functional $\mathcal H[\bm u_h]$ can be viewed as a function $H({\bm u})$ of the degrees of freedom $\bm u$. By comparing Fr\'echet's derivatives of  $\mathcal H[\bm u_h]$ and $H({\bm u})$ one can deduce
$\frac{\partial H}{\partial {u_i}}({\bm u}) = \int_{\Omega} \frac{\delta \mathcal H}{\delta \bm u_h}(\bm u_h) \cdot \bm \mu_i \, dx$.
This leads to the AVF gradient for the functional $\mathcal H[\bm u_h]$
\begin{align} \label{eq:avf_method}
\frac{\mathcal H[\bm u_h^{k+1}]-\mathcal H[\bm u_h^{k}]}{\Delta t} =   \int_0^1 \frac{\partial H}{\partial {\bm u}}\left( {\bm u}^{\xi} \right) \, d \xi \cdot \frac{\left( {\bm u}^{k+1}-{\bm u}^{k}\right)}{\Delta t} 
=  \int_{\Omega_h} \left[ \left(  \int_0^1   \frac{\delta \mathcal H}{\delta \bm u_h}(\bm u^{\xi}_h)\, d \xi  \, \right) \cdot  \frac{\bm u_h^{k+1}-\bm u_h^{k}}{\Delta t}  \, \right] dx  
\end{align}

\vspace{2.5mm}
\noindent \textbf{Implicit temporal discretization.}\\ 
We now discuss how we choose the discrete Poisson matrix $\bm J( \bm u^{k}, \bm u^{k+1})$. Since we aim for a scheme that preserves Gauss' law at the discrete level, we need a discrete analog of Lemma \ref{prop:particle_mass_conservation}.
We consider the particle positions at the old and new time steps: $\bm X_p^{k}$ and $\bm X_p^{k+1}$. We assume that within each time-step, each particle has a linear trajectory that crosses $s$ cells, where $s$ is an integer. The trajectory is partitioned into $s$ sub-intervals that we denote by $[\bm A_{p,i-1}, \bm A_{p, i}]$, $i=0,\ldots, s$, with $\bm A_{p,0} = \bm X_p^k$, $\bm A_{p,s} = \bm X_p^{k+1}$ and $\bm A_{p,i}$, $i=1, \ldots, s-1$ being the intersection points of the line between $\bm X_p^k$ and $\bm X_p^{k+1}$ and each of the element boundaries crossed by the line. We also define $\bm X_{p,i}^{\xi}:=(1-\xi) \bm A_{p,i} + \xi \bm A_{p,i-1}$.

\begin{lemma} \label{prop:pic_mass_discrete}
Discrete weak conservation of charge for particles reads
\begin{align} \notag
\int_{\Omega_h} \sum_p^{N} w_p  \frac{\left( \delta(\bm x-\bm X^{k+1}_p) -\delta(\bm x-\bm X^{k}_p) \right)}{\Delta t} \phi_h(\bm x) - \sum_p^{N} \, w_p \sum_{i}^s\, \frac{\bm A_{p,i}-\bm A_{p,i-1}}{\Delta t} \cdot \int_0^1 \nabla \phi_h(\bm X^{\xi}_{p,i}) \, d \xi= 0 \\ \hspace{1cm} \forall \phi_h(\bm x) \in \mathring Q_{k+1} 
\end{align}
\end{lemma}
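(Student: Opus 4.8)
The plan is to imitate the proof of Lemma~\ref{prop:particle_mass_conservation}, replacing the continuous-time manipulations there (chain rule in $t$ followed by integration by parts in $\bm x$) by their fully-discrete counterparts: first evaluate the Dirac masses against $\phi_h$, then telescope along the broken trajectory, and finally apply the fundamental theorem of calculus on each segment, which lies in the interior of a single cell where $\phi_h$ is a polynomial.

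First I would fix one particle $p$ and, using that $\bm X_p^{k},\bm X_p^{k+1}\in\Omega_h$ (the particles stay in the domain), reduce the claim to the pointwise identity
\[
\frac{\phi_h(\bm X_p^{k+1})-\phi_h(\bm X_p^{k})}{\Delta t} = \sum_{i=1}^{s}\frac{\bm A_{p,i}-\bm A_{p,i-1}}{\Delta t}\cdot\int_0^1\nabla\phi_h(\bm X^{\xi}_{p,i})\,d\xi,
\]
since $\int_{\Omega_h}\delta(\bm x-\bm X_p^{k+1})\,\phi_h\,dx=\phi_h(\bm X_p^{k+1})$ and likewise at step $k$; multiplying by $w_p$ and summing over $p$ then recovers the stated equation.

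Next I would use that $\phi_h\in\mathring Q_{k+1}\subset H^1(\Omega)$ is globally continuous, so $\phi_h(\bm A_{p,i})$ is well defined even though $\bm A_{p,i}$ lies on an element boundary for $1\le i\le s-1$; hence
\[
\phi_h(\bm X_p^{k+1})-\phi_h(\bm X_p^{k}) = \phi_h(\bm A_{p,s})-\phi_h(\bm A_{p,0}) = \sum_{i=1}^{s}\bigl(\phi_h(\bm A_{p,i})-\phi_h(\bm A_{p,i-1})\bigr).
\]
On the $i$-th segment both endpoints $\bm A_{p,i-1},\bm A_{p,i}$ lie in the closed cell $K_i$ crossed by the straight-line trajectory and $\phi_h|_{K_i}$ is a polynomial, hence $C^\infty$; parametrising by $\bm X^{\xi}_{p,i}=(1-\xi)\bm A_{p,i}+\xi\bm A_{p,i-1}$, so that $d\bm X^{\xi}_{p,i}/d\xi=\bm A_{p,i-1}-\bm A_{p,i}$, the chain rule and the fundamental theorem of calculus give
\[
\phi_h(\bm A_{p,i-1})-\phi_h(\bm A_{p,i}) = \int_0^1\frac{d}{d\xi}\phi_h(\bm X^{\xi}_{p,i})\,d\xi = \int_0^1\nabla\phi_h(\bm X^{\xi}_{p,i})\cdot(\bm A_{p,i-1}-\bm A_{p,i})\,d\xi,
\]
and thus $\phi_h(\bm A_{p,i})-\phi_h(\bm A_{p,i-1})=\int_0^1\nabla\phi_h(\bm X^{\xi}_{p,i})\cdot(\bm A_{p,i}-\bm A_{p,i-1})\,d\xi$. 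Here $\nabla\phi_h$ denotes the gradient of $\phi_h|_{K_i}$; since the only points at which this could be ambiguous are $\xi\in\{0,1\}$, a null set, the integral is unambiguous. Summing over $i$, dividing by $\Delta t$, and inserting into the telescoped identity gives the per-particle identity above, and summing over $p$ with weights $w_p$ finishes the proof.

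I do not expect a genuine obstacle; the single point that needs care — the discrete analogue of the integration-by-parts step in Lemma~\ref{prop:particle_mass_conservation} — is the global continuity of $\phi_h$, which is exactly why the test space is $\mathring Q_{k+1}$ rather than a broken space: it is what legitimises the telescoping and makes the contributions at the crossed faces cancel. The only bookkeeping to double-check is the degenerate cases, namely a particle that crosses no face within the step ($s=1$, no interior nodes) or does not move at all (both sides vanish), both of which the argument covers without change.
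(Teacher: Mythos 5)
Your proposal is correct and follows essentially the same route as the paper's proof: the sifting property of the Dirac delta, telescoping over the segment endpoints $\bm A_{p,i}$, and the fundamental theorem of calculus (the ``AVF'' identity) on each segment where $\phi_h$ is polynomial. The extra care you take with the global continuity of $\phi_h\in\mathring Q_{k+1}$ at the crossed faces and with the degenerate cases is a welcome, but not substantively different, elaboration of the paper's three-line computation.
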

\begin{proof}
\begin{align} \notag
\int_{\Omega_h} \sum_p^{N} w_p  \frac{\left( \delta(\bm x-\bm X^{k+1}_p) -\delta(\bm x-\bm X^{k}_p) \right)}{\Delta t} & \phi_h(\bm x) \stackrel{\text{(sifting property)}}{=}  \sum_p^{N} w_p \frac{\phi_h(\bm X_p^{k+1}) - \phi_h(\bm X_p^k)}{\Delta t} \\ \notag
&= \sum_p^{N} \sum_i^s w_p \frac{\phi_h(\bm A_{p,i}) - \phi_h(\bm A_{p,i-1})}{\Delta t} \\ \label{eq:discrete_current}
& \stackrel{\text{(AVF)}}{=}   \sum_p^{N} \sum_i^s w_p \frac{\bm A_{p,i}-\bm A_{p,i-1}}{\Delta t} \cdot \int_{0}^1 \nabla \phi_h(\bm X^{\xi}_{p,i}) d \xi
\end{align}
\end{proof}
In analogy to Lemma \ref{prop:particle_mass_conservation}, we recognize that Eq. \eqref{eq:discrete_current} states the divergence of the discrete current due to particles.

We use the piecewise integral to define the evaluation of the current in our AVF scheme. To keep the necessary antisymmetry in the Poisson matrix, we use the same definition in the evaluation of the electric field in the update of $\bm U$. For the density $\rho$, we allow for a combination of the old and new time step as in \eqref{eq:alpha_interpolation} by a parameter $\theta \in [0,1]$.

For the semi-discrete system (\ref{eq:qqnrt_rho}-\ref{eq:qqnrt_U_k}), this yields the following fully discrete scheme
\begin{align} 
\label{eq:dt_qqnrt_rho}
& \int_{\Omega_h} \frac{\rho^{k+1}-\rho^{k}}{\Delta t} \, \phi_h = \int_{\Omega_h} \rho_h^{k, \theta} \bm w_h \cdot \nabla \phi_h  \hspace{1cm} \forall \phi_h \in Q_{k+1}  \\ \label{eq:dt_qqnrt_m}
& \int_{\Omega_h} \frac{\bm M^{k+1}-\bm M^k}{\Delta t} \cdot \bm \mu_h \, dx = \int_{\Omega_h} \left( \bm w_h \cdot \nabla \bm \mu_h -\bm \mu_h \cdot \nabla \bm w_h \right) \cdot \bm M_h^{k+1/2} \, dx \\ \notag
& \hspace{1cm} - \int_{\Omega_h} \rho_h^{k,\theta} \, \bm \mu_h \cdot \nabla \left( \overline{ \int_{0}^1 \gamma(m \bm M_h^{\xi}/ \rho_h^{\xi}) -1- \frac{ \left( \bm M_h^{\xi} \right) ^2}{c^2 \, \left(\rho_h^{\xi} \right)^2  \gamma(m \bm M_h^{\xi}/ \rho_h^{\xi})} \, d \xi} \right) c^2 \, dx \\ \notag
& \hspace{1cm}  + \frac{e}{m} \int_{\Omega_h} \rho_h^{k,\theta} \left( \bm E_h^{k+1/2} + \bm w_h \times \frac{\bm B_h^{k+1/2}}{c} \right)  \cdot \bm \mu_h \, dx 
\hspace{1cm} \forall \bm \mu_h \in Q_{k+1}^d\\ \label{eq:dt_qqnrt_E}
& \int_{\Omega_h} \frac{\bm E_h^{k+1} - \bm E_h^k}{\Delta t} \cdot \bm \nu_h \, dx = c \int_{\Omega_h} \bm B_h^{k+1/2} \cdot (\nabla \times \bm \nu_h) \, dx - \frac{4 \pi e }{m} \int_{\Omega_h}  \left( \rho_h^{k,\theta} \bm w_h \cdot \bm \nu_h \right) \, dx \\ \notag
& \hspace{5cm} - 4 \pi e  \sum_p^N \sum_i^s w_p  \frac{( \bm A_{p,i}- \bm A_{p,i-1} ) }{\Delta t} \cdot \int_{0}^1 \bm  \nu_h(\bm X^{\xi}_{p,i}) d \xi   \hspace{1cm} \bm \nu_h \in \mathring{N}_k
\\ \label{eq:dt_qqnrt_B}
& \int_{\Omega_h} \frac{\bm B_h^{k+1}- \bm B_h^k}{\Delta t} \cdot \bm \tau_h \, dx = - c \int_{\Omega_h} \bm \tau_h \cdot (\nabla \times \bm E_h^{k+1/2}) \, dx \hspace{1cm} \forall \bm \tau_h \in \mathring{RT}_k \\ \label{eq:dt_qqnrt_w}
& \text{with } \bm w_h = \overline{ \int_{0}^1 \frac{\bm M_h^{\xi}}{\rho_h^{\xi} \gamma(m \bm M_h^{\xi}/\rho_h^{\xi})} d \xi}
\end{align}
\begin{align} \label{eq:dt_qqnrt_X_k}
&\frac{\bm X_p^{k+1}- \bm X_p^k}{\Delta t}= \int_0^1 \frac{\bm U_p^{\xi}}{m \gamma(\bm U_p^{\xi})} d \xi \hspace{0.5cm} \text{for } \bm X_k \in \Omega_h \\ \label{eq:dt_qqnrt_U_k}
&\frac{\bm U_p^{k+1}-\bm U_p^k}{\Delta t} = e \left( \sum_i^s \bm D_i \cdot \int_{0}^1 \bm E_h^{k+1/2}(\bm X^{\xi}_{p,i}) d \xi + \frac{1}{m} \int_0^1 \frac{\bm U_p^{\xi}}{\gamma(\bm U_p^{\xi})} d \xi \times \frac{\bm B_h^{k+1/2}(\bm X_p^{k+1/2}, t)}{c} \right) 
\end{align}
In our numerical experiments, we set $\theta=\frac{1}{2}$.

We also introduced diagonal matrices in the particle momentum equation \eqref{eq:dt_qqnrt_U_k}
\begin{align}
\bm D_i := \text{diag}\left(
\frac{ (\bm A_{p,i} - \bm A_{p,i-1})_1}{ \left(\bm X_p^{k+1} - \bm X_p^{k} \right)_1},\frac{(\bm A_{p,i} - \bm A_{p,i-1})_2  }{\left(\bm X_p^{k+1} - \bm X_p^{k} \right)_2}, \frac{(\bm A_{p,i} - \bm A_{p,i-1})_3 }{\left(\bm X_p^{k+1} - \bm X_p^{k} \right)_3} \right)
\end{align}
these matrices $\bm D_i$ have the following two properties 
\begin{align} \label{eq:D_prop_1}
\sum_i^s \bm D_i = \bm I \text{ where } \bm I \text{ is the identity matrix} \\ \label{eq:D_prop_2}
\bm D_i \cdot \int_0^1 \frac{\bm U_p^\xi}{m \gamma(\bm U_p^\xi)} \, d \xi = \left( \bm A_{p,i} - \bm A_{p,i-1} \right) / \Delta t
\end{align}
property \eqref{eq:D_prop_1} indicates that the right-hand side term $\bm D_i \cdot \int_{0}^1 \bm E^{k+1/2}(\bm X^{k+\xi}_{p,i}) d \xi$ in the particle momentum equation \eqref{eq:dt_qqnrt_U_k} performs averaging of the electric field components across different cells in proportion to the paths' lengths. Property \eqref{eq:D_prop_2} holds due to the particle position equation \eqref{eq:dt_qqnrt_X_k} and is essential for the energy conservation. The matrix $\bm D_i$ is not well defined in case $\bm X_p^{k+1}-\bm X_p^{k}=0$. In this case $s=1$, i.e. the trajectory is represented by a point. We can view this case as $\lim_{\bm X_p^{k+1} \rightarrow \bm X_p^k} \bm D_1=\bm I$.

\begin{proposition} \label{prop:dt_qqnrt}
The temporal discretization in (\ref{eq:dt_qqnrt_rho}-\ref{eq:dt_qqnrt_U_k}) exactly conserves total fluid mass $\int_{\Omega_h} \rho_h^k \, dx$, total particle mass $\sum_p^N w_p$, total energy $\int_{\Omega_h} \rho_h^k (\gamma(m \bm M_h^k/ \rho_h^k)-1) c^2 dx + \sum_p^N w_p (\gamma(\bm U_p^k)-1)m c^2$ $+\frac{1}{8 \pi} \int_{\Omega_h} \left( (\bm E_h^k)^2 + \right.$ $\left. (\bm B_h^k)^2 \right) dx$, the weak Gauss's law  $ \int_{\Omega_h} \left[ \nabla_w \cdot \bm E^k -  4 \pi e \left(\frac{\rho^k_h}{m} + \sum_p^N w_p \delta(\bm x - \bm X_p^k) \right) \right] \, \phi_h \, dx$ for all $\phi_h \in \mathring Q_{k+1}$, and $\nabla \cdot \bm B_h^k$ constraint. 
\end{proposition}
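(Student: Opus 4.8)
The plan is to transcribe the proof of Proposition~\ref{prop:h1h1_proof} to the fully discrete setting, replacing every time derivative by the corresponding divided difference and every field (resp. functional derivative) appearing as a test function by its average-vector-field line integral, and then to verify that the two genuinely new ingredients --- the interval integrals $\int_0^1(\cdot)\,d\xi$ and the cell-splitting $[\bm A_{p,i-1},\bm A_{p,i}]$, $i=1,\dots,s$, of each particle trajectory --- do not disturb any of the cancellations. Total fluid mass is immediate: taking $\phi_h=1$ in \eqref{eq:dt_qqnrt_rho} kills the right-hand side, so $\int_{\Omega_h}(\rho_h^{k+1}-\rho_h^k)\,dx=0$, and $\sum_p w_p$ is conserved because the weights are fixed and the particles stay in $\Omega_h$.

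For energy I would first apply the AVF identity \eqref{eq:avf_method} to the Hamiltonian \eqref{eq:relativistic_hamiltonian_hybrid_degree_zero_conserved}: $\mathcal H^{k+1}-\mathcal H^k$ equals the sum over $\rho,\bm M,\bm E,\bm B,\bm U_p$ of $\big(\int_0^1\frac{\delta\mathcal H}{\delta\cdot}(\bm u_h^\xi)\,d\xi\big)$ paired with the increment $(\cdot^{k+1}-\cdot^k)$. By \eqref{eq:functional_der_rel_hamil} these averaged derivatives are exactly the quantities already hard-wired into the scheme (the projected, bracketed line integral playing the role of $\phi_h$ in \eqref{eq:dt_qqnrt_rho}; the velocity $\bm w_h$ of \eqref{eq:dt_qqnrt_w}; the midpoints $\tfrac1{4\pi}\bm E_h^{k+1/2}$ and $\tfrac1{4\pi}\bm B_h^{k+1/2}$; and $w_p\int_0^1 \bm U_p^\xi/(m\gamma(\bm U_p^\xi))\,d\xi$), and the $L^2$-projections are harmless since the matching increments lie in $Q_{k+1}$ resp. $Q_{k+1}^d$, just as in Proposition~\ref{prop:h1h1_proof}. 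Testing \eqref{eq:dt_qqnrt_rho}--\eqref{eq:dt_qqnrt_U_k} with precisely these functions, the fluid and Maxwell contributions cancel verbatim as in Proposition~\ref{prop:h1h1_proof} (the antisymmetric transport term vanishes, the density and momentum ``pressure'' terms cancel, the Maxwell curl terms cancel, the $\rho\bm w$--$\bm E$ couplings cancel, and $\bm w_h\times\bm B_h^{k+1/2}$ drops under the dot product with $\bm w_h$). What survives is a particle balance: the leftover $-e\sum_p\sum_i w_p\frac{\bm A_{p,i}-\bm A_{p,i-1}}{\Delta t}\cdot\int_0^1\bm E_h^{k+1/2}(\bm X_{p,i}^\xi)\,d\xi$ from Ampère's law \eqref{eq:dt_qqnrt_E} against \eqref{eq:dt_qqnrt_U_k} dotted with $w_p\int_0^1 \bm U_p^\xi/(m\gamma(\bm U_p^\xi))\,d\xi$. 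In the latter the magnetic part vanishes because $\int_0^1 \bm U_p^\xi/(m\gamma(\bm U_p^\xi))\,d\xi$ is parallel to $\int_0^1 \bm U_p^\xi/\gamma(\bm U_p^\xi)\,d\xi$, while the electric part is handled using that each $\bm D_i$ is diagonal hence symmetric, $\bm a\cdot(\bm D_i\bm b)=(\bm D_i\bm a)\cdot\bm b$, together with property \eqref{eq:D_prop_2}, which rewrites it as $e\sum_p\sum_i w_p\frac{\bm A_{p,i}-\bm A_{p,i-1}}{\Delta t}\cdot\int_0^1\bm E_h^{k+1/2}(\bm X_{p,i}^\xi)\,d\xi$ and hence cancels the Ampère leftover. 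This gives $\mathcal H^{k+1}=\mathcal H^k$. I expect this particle balance to be the main obstacle: it is the concrete incarnation of the antisymmetry of the discrete Poisson matrix and it relies on using the \emph{same} piecewise, path-weighted current in \eqref{eq:dt_qqnrt_E} as the field evaluation in \eqref{eq:dt_qqnrt_U_k}, on property \eqref{eq:D_prop_2}, and on the degenerate-case convention $\bm D_1=\bm I$ when $\bm X_p^{k+1}=\bm X_p^k$.

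The two constraints follow the semi-discrete argument line for line. For the weak Gauss law I would test \eqref{eq:dt_qqnrt_E} with $\bm\nu_h=\nabla\phi_h$, $\phi_h\in\mathring Q_{k+1}$ (admissible by \eqref{eq:prop_Qk}): the curl term dies via $\nabla\times\nabla\phi_h=0$, the fluid current is rewritten with \eqref{eq:dt_qqnrt_rho} (legitimate since $\mathring Q_{k+1}\subset Q_{k+1}$) and the particle current with Lemma~\ref{prop:pic_mass_discrete}, and \eqref{eq:weak_divergence} is applied on the left; rearranging shows that $\int_{\Omega_h}\big[\nabla_w\cdot\bm E^k-4\pi e(\tfrac{\rho_h^k}{m}+\sum_p w_p\delta(\bm x-\bm X_p^k))\big]\phi_h\,dx$ is the same at steps $k$ and $k+1$ for all $\phi_h\in\mathring Q_{k+1}$. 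For the $\nabla\cdot\bm B$ constraint I would test \eqref{eq:dt_qqnrt_B} with $\bm\tau_h=\nabla_w\varphi_h$, $\varphi_h\in\mathring{DG}_k$ (admissible by \eqref{eq:weak_gradient}): by \eqref{eq:prop_Nk} the right-hand side equals $c\int_{\Omega_h}\varphi_h\,\nabla\cdot(\nabla\times\bm E_h^{k+1/2})\,dx=0$, by \eqref{eq:weak_gradient} the left-hand side equals $-\int_{\Omega_h}\tfrac{\nabla\cdot\bm B^{k+1}-\nabla\cdot\bm B^k}{\Delta t}\varphi_h\,dx$, and since $\nabla\cdot\bm B^{k+1}-\nabla\cdot\bm B^k\in\mathring{DG}_k$ by \eqref{eq:prop_RTk}, choosing $\varphi_h$ equal to this difference yields $\|\nabla\cdot\bm B^{k+1}-\nabla\cdot\bm B^k\|_2=0$, so $\nabla\cdot\bm B^k$ is preserved.
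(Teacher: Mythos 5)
Your proposal is correct and follows essentially the same route as the paper's proof: mass via $\phi_h=1$, energy via the AVF identity applied to the Hamiltonian with the AVF-averaged derivatives used as test functions (where you actually spell out the particle-term cancellation via the symmetry of $\bm D_i$ and property \eqref{eq:D_prop_2} in more detail than the paper, which only asserts it), Gauss's law via $\bm\nu_h=\nabla\phi_h$ together with the density equation and Lemma \ref{prop:pic_mass_discrete}, and the divergence constraint via $\bm\tau_h=\nabla_w\varphi_h$. No gaps.
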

\begin{proof}
We take $\phi_h = 1$ in \eqref{eq:dt_qqnrt_rho} to deduce conservation of total mass
\begin{align}
\int_{\Omega_h} \rho_h^{k+1} \, dx = \int_{\Omega_h} \rho_h^k \, dx
\end{align}
To check conservation of total energy, we consider discrete energy rate
\begin{align} \label{eq:diff_row_1}
\frac{\mathcal H^{k+1}-\mathcal H^{k}}{\Delta t} &= \frac{1}{\Delta t} \int_{\Omega_h} \rho_h^{k+1} \left( \gamma \left(m \bm M_h^{k+1}/\rho_h^{k+1} \right) -1 \right) c^2 \, dx - \frac{1}{\Delta t} \int_{\Omega_h} \rho_h^{k} \left( \gamma \left( m \bm M_h^k/ \rho_h^k \right) -1 \right) c^2 \, dx\\ \label{eq:diff_row_2}
&+ \frac{1}{\Delta t}\sum_p^N w_p (\gamma(\bm U_p^{k+1})-1)m c^2-\frac{1}{\Delta t}\sum_p^N w_p (\gamma(\bm U_p^{k})-1)m c^2 \\ \label{eq:diff_row_3}
&+\frac{1}{\Delta t}\frac{1}{8 \pi} \int_{\Omega_h} \left( (\bm E_h^{k+1})^2 + (\bm B_h^{k+1})^2 \right) \, dx -\frac{1}{\Delta t}\frac{1}{8 \pi} \int_{\Omega_h} \left( (\bm E_h^{k})^2 + (\bm B_h^{k})^2 \right) \, dx 
\end{align}
applying the AVF gradient \eqref{eq:avf_method} to \eqref{eq:diff_row_1}, the AVF gradient \eqref{eq:avf_energy} to \eqref{eq:diff_row_2} and expressing the functional and partial derivatives, appearing after the application of the AVF gradients, with the help of \eqref{eq:functional_der_rel_hamil} we obtain
\begin{align} \notag
\frac{\mathcal H^{k+1}-\mathcal H^{k}}{\Delta t} &= \int_{\Omega_h} \frac{\rho_h^{k+1}-\rho_h^{k}}{\Delta t}  \cdot \left(  \int_0^1  \left(\gamma(m \bm M_h^\xi/\rho_h^\xi) - 1-\frac{(\bm M_h^{\xi})^2}{\gamma(m \bm M_h^\xi/\rho_h^\xi) \, (\rho_h^{\xi})^2 \, c^2} \right) c^2 \, d \xi  \, \right) dx \\ \notag
&+\int_{\Omega_h} \frac{\bm M_h^{k+1}-\bm M_h^{k}}{\Delta t}  \cdot \left(  \int_0^1  \frac{\bm M_h^{\xi}}{\gamma(m \bm M_h^\xi/\rho_h^\xi) \, \rho^{k+\xi}}\, d \xi  \, \right) dx \\ \notag
 &+ \sum_p^N  w_p  \frac{\bm U^{k+1}_p-\bm U^{k}_p}{\Delta t} \cdot \int_0^1 \frac{\bm U_p^{k+\xi}}{m \gamma(\bm U_p^{k+\xi})} \, d \xi \\ \notag
 & +\frac{1}{8 \pi} \int_{\Omega_h} \left( \frac{(\bm E_h^{k+1})^2-(\bm E_h^{k})^2}{\Delta t} + \frac{(\bm B_h^{k+1})^2 -(\bm B_h^{k})^2}{\Delta t}  \right) \, dx 
\end{align}
Next, we use the definition of the $L^2$-projection for the first and the second terms on the right-hand side
\begin{align} \label{eq:proof_discrete_hamiltonian_diffference}
\frac{\mathcal H^{k+1}-\mathcal H^{k}}{\Delta t} &= \int_{\Omega_h} \frac{\rho_h^{k+1}-\rho_h^{k}}{\Delta t}  \cdot \left(  \overline{\int_0^1  \left(\gamma(m \bm M_h^\xi/\rho_h^\xi) - 1-\frac{(\bm M_h^{\xi})^2}{\gamma(m \bm M_h^\xi/\rho_h^\xi) \, (\rho^{k+\xi})^2 \, c^2} \right) c^2 \, d \xi }  \, \right) dx \\ \notag
&+\int_{\Omega_h} \frac{\bm M_h^{k+1}-\bm M_h^{k}}{\Delta t}  \cdot \left( \overline{  \int_0^1  \frac{\bm M_h^{\xi}}{\gamma(m \bm M_h^\xi/\rho_h^\xi) \, \rho^{k+\xi}}\, d \xi  } \, \right) \,  dx \\ \notag
 &+ \sum_p^N  w_p  \frac{\bm U^{k+1}_p-\bm U^{k}_p}{\Delta t} \cdot \int_0^1 \frac{\bm U_p^{k+\xi}}{m \gamma(\bm U_p^{k+\xi})} \, d \xi \\ \notag
 & +\frac{1}{4 \pi} \int_{\Omega_h} \left(\frac{ \bm E_h^{k+1}-\bm E_h^{k}}{\Delta t} \cdot \bm E_h^{k+1/2}  + \frac{ \bm B_h^{k+1}-\bm B_h^{k}}{\Delta t} \cdot \bm B_h^{k+1/2}  \right) \, dx 
\end{align}
then we consider \eqref{eq:dt_qqnrt_rho} with $\phi_h =\left(  \overline{\int_0^1  \left(\gamma(m \bm M_h^\xi/\rho_h^\xi) - 1-\frac{(\bm M_h^{\xi})^2}{\gamma(m \bm M_h^\xi/ \rho_h^\xi) \, (\rho_h^{\xi})^2 \, c^2} \right) c^2 \, d \xi }  \, \right)$, \eqref{eq:dt_qqnrt_m} with $\bm \mu_h = \bm w_h$, \eqref{eq:dt_qqnrt_E} with $\bm \nu_h=\bm E^{k+1/2}$, and \eqref{eq:dt_qqnrt_B} with $\bm \tau_h = \bm B^{k+1/2}$ and use them in place of the terms on the right-hand side of \eqref{eq:proof_discrete_hamiltonian_diffference}. Considering also the evolution equation for the particle's momentum \eqref{eq:dt_qqnrt_U_k}, along with the second property of the $\bm D_i$ matrix \eqref{eq:D_prop_2}, it can be shown that all the terms cancel out. So the total energy is conserved.

To check divB constraint, we consider the Faraday's law \eqref{eq:dt_qqnrt_B} and take $\bm \tau_h=\nabla_w \varphi_h$ with $\varphi_h \in \mathring{DG}_k$. 
\begin{align} \label{eq:eq_b_euler}
\int_{\Omega_h} \frac{\bm B_h^{k+1} - \bm B_h^k}{\Delta t} \cdot \nabla_w \varphi_h \, dx = -c \int_{\Omega_h}\nabla_w \varphi_h\cdot \left( \nabla \times \bm E_h^{k+1/2} \right) \, dx \hspace{1cm} \forall \varphi_h \in \mathring{DG}_k
\end{align}
the right-hand side of \eqref{eq:eq_b_euler} vanishes due to the property \eqref{eq:prop_Nk} that states $\nabla \cdot \nabla \times \bm E_h^k=0$. On the left-hand side of \eqref{eq:eq_b_euler}, we take $\varphi_h = \nabla \cdot (\bm B_h^{k+1}-\bm B_h^k) \in \mathring{DG}_k$ and deduce $\nabla \cdot \bm B_h^{k+1} = \nabla \cdot \bm B_h^k$.

To check the Gauss's law, we consider the Ampere's equation \eqref{eq:dt_qqnrt_E} and take $\bm \nu_h=\nabla \phi_h$ with $\phi_h \in \mathring Q_{k+1}$
\begin{align} \label{eq:eq_e}
& \int_{\Omega_h} \frac{\bm E_h^{k+1} - \bm E_h^k}{\Delta t} \cdot \nabla \phi_h \, dx = c \int_{\Omega_h} \bm B_h^{k+1/2} \cdot (\nabla \times \nabla \phi_h) \, dx - \frac{4 \pi e }{m} \int_{\Omega_h}  \left( \rho_h^{k,\theta} \bm w_h \cdot \nabla \phi_h \right) \, dx \\ \notag
& \hspace{5cm} - 4 \pi e  \sum_p^N \sum_i^s w_p  \frac{( \bm A_{p,i}- \bm A_{p,i-1} ) }{\Delta t} \cdot \int_{0}^1 \nabla \phi_h(\bm X^{\xi}_{p,i}) d \xi \hspace{1cm} \phi_h \in \mathring Q_{k+1}
\end{align}
the term with $\nabla \times \nabla \phi_h$ vanishes due to the property \eqref{eq:prop_Qk}. We use the weak divergence definition \eqref{eq:weak_divergence} to rewrite the left-hand side of \eqref{eq:eq_e}. We then use the density equation \eqref{eq:dt_qqnrt_rho} and Lemma \ref{prop:pic_mass_discrete} to rewrite the right-hand side:
\begin{align} \notag
\int_{\Omega_h} \left( \nabla_w \cdot \bm E_h^{k+1}-\nabla_w \cdot \bm E_h^{k} \right) &= \frac{4 \pi e}{m} \int_{\Omega_h} \left( \rho_h^{k+1}-\rho_h^{k} \right) \phi_h \, dx \\ 
& + 4 \pi e \int_{\Omega_h} \sum_p^N w_p \left( \delta \left(\bm x - \bm X_p^{k+1} \right) - \delta \left(\bm x - \bm X_p^{k} \right) \right) \phi_h \, dx \\ \notag
& \forall \phi_h \in \mathring Q_{k+1}
\end{align}
and we see that the Gauss's law is preserved.
\end{proof}

The temporal discretization for the semi-discrete scheme with fluxes (\ref{eq:dgdiv_rho}-\ref{eq:dgdiv_U_k}) is similar to (\ref{eq:dt_qqnrt_rho}-\ref{eq:dt_qqnrt_U_k}). It has the same conservation properties. For the benefit of the reader we summarize it below.
\begin{align} 
\label{eq:dt_dgdiv_rho}
& \int_{\Omega_h} \frac{\rho^{k+1}-\rho^{k}}{\Delta t} \, \phi_h = \int_{\Omega_h} \rho_h^{k,\theta} \bm w_h \cdot \nabla \phi_h -  \sum_{e \in \mathcal E_h} \int_{e} \left( \rho_h^{k,\theta_2} \bm w_h \right)^{*} \cdot \llbracket \phi_h  \rrbracket \hspace{1cm} \forall \phi_h \in DG_{k+1}  \\ \label{eq:dt_dgdiv_m}
& \int_{\Omega_h} \frac{\bm M^{k+1}-\bm M^k}{\Delta t} \cdot \bm \mu_h \, dx = \int_{\Omega_h} \left( \bm w_h \cdot \nabla \bm \mu_h -\bm \mu_h \cdot \nabla \bm w_h \right) \cdot \bm M_h^{k+1/2} \, dx \\ \notag
& \hspace{1cm} - \int_{\Omega_h} \rho_h^{k,\theta} \, \bm \mu_h \cdot \nabla \left( \overline{ \int_{0}^1 \gamma(m \bm M_h^{\xi}/ \rho_h^{\xi}) -1- \frac{ \left( \bm M_h^{\xi} \right) ^2}{c^2 \, \left(\rho_h^{\xi} \right)^2  \gamma(m \bm M_h^{\xi}/ \rho_h^{\xi})} \, d \xi} \right) c^2 \, dx \\ \notag
& \hspace{1cm}  + \frac{e}{m} \int_{\Omega_h} \rho_h^{k,\theta} \left( \bm E_h^{k+1/2} + \bm w_h \times \frac{\bm B_h^{k+1/2}}{c} \right)  \cdot \bm \mu_h \, dx \\ \notag
&\hspace{1cm} + \sum_{e \in \mathcal E_h} \int_{e } \left( \bm n \times \bm M_h^{\theta_3} \right)^{*} \cdot \llbracket \bm \mu_h \times \bm w_h \rrbracket \, ds\\ \notag
&\hspace{0cm} + \sum_{e \in \mathcal{E}_h} \int_{e}  \left(\rho_h^{k,\theta_2} \bm \mu_h \right)^{*} \cdot \left \llbracket \overline{ \int_0^1 \gamma(m \bm M_h^{\xi}/ \rho_h^{\xi}) -1- \frac{\left( \bm M_h^{\xi} \right)^2}{c^2 \, \left(\rho_h^{\xi} \right)^2 \gamma(m \bm M_h^{\xi}/ \rho_h^{\xi})} \, d \xi }  \right \rrbracket c^2 \, ds \hspace{3mm}\forall \bm \mu_h \in RT_k\\ \label{eq:dt_dgdiv_E}
& \int_{\Omega_h} \frac{\bm E_h^{k+1} - \bm E_h^k}{\Delta t} \cdot \bm \nu_h \, dx = c \int_{\Omega_h} \bm B_h^{k+1/2} \cdot (\nabla \times \bm \nu_h) \, dx - \frac{4 \pi e }{m} \int_{\Omega_h}  \left( \rho_h^{k,\theta} \bm w_h \cdot \bm \nu_h \right) \, dx \\ \notag
& \hspace{5cm} - 4 \pi e  \sum_p^N \sum_i^s w_p  \frac{( \bm A_{p,i}- \bm A_{p,i-1} ) }{\Delta t} \cdot \int_{0}^1 \bm  \nu_h(\bm X^{\xi}_{p,i}) d \xi   \hspace{1cm} \forall \bm \nu_h \in \mathring N_k
\\ \label{eq:dt_dgdiv_B}
& \int_{\Omega_h} \frac{\bm B_h^{k+1}- \bm B_h^k}{\Delta t} \cdot \bm \tau_h \, dx = - c \int_{\Omega_h} \bm \tau_h \cdot (\nabla \times \bm E_h^{k+1/2}) \, dx \hspace{1cm} \forall \bm \tau_h \in \mathring{RT}_k \\ \label{eq:dt_dgdiv_w}
& \text{with } \bm w_h = \overline{ \int_{0}^1 \frac{\bm M_h^{\xi}}{\rho_h^{\xi} \gamma(m \bm M_h^{\xi}/\rho_h^{\xi})} d \xi}
\end{align}
\begin{align} \label{eq:dt_dgdiv_X_k}
&\frac{\bm X_p^{k+1}- \bm X_p^k}{\Delta t}= \int_0^1 \frac{\bm U_p^{\xi}}{m \gamma(\bm U_p^{\xi})} d \xi \hspace{0.5cm} \text{with } \bm X_k \in \Omega_h \\ \label{eq:dt_dgdiv_U_k}
&\frac{\bm U_p^{k+1}-\bm U_p^k}{\Delta t} = e \left(  \sum_i^s \bm D_i \cdot \int_{0}^1 \bm E_h^{k+1/2}(\bm X^{\xi}_{p,i}) d \xi+ \frac{1}{m} \int_0^1 \frac{\bm U_p^{\xi}}{\gamma(\bm U_p^{\xi})} d \xi \times \frac{\bm B_h^{k+1/2}(\bm X_p^{k+1/2}, t)}{c} \right) 
\end{align}
In our numerical experiments, we set $\theta=\theta_2=\frac{1}{2}$.
We state the conservation properties of the temporal-discretization (\ref{eq:dt_dgdiv_rho}-\ref{eq:dt_dgdiv_U_k}) in the proposition below. We omit the proof which follows the proof in the Proposition \ref{prop:dt_qqnrt} but with the flux-terms canceled as in Proposition \ref{prop:dgdiv_proof}.

\begin{proposition} \label{prop:dt_dgdiv}
The temporal discretization in (\ref{eq:dt_dgdiv_rho}-\ref{eq:dt_dgdiv_U_k}) exactly conserves total fluid mass $\int_{\Omega_h} \rho_h^k \, dx$, total particle mass $\sum_p^N w_p$, total energy $\int_{\Omega_h} \rho_h^k (\gamma(m \bm M_h^k/ \rho_h^k)-1) c^2 dx + \sum_p^N w_p (\gamma(\bm U_p^k)-1)m c^2 +\frac{1}{8 \pi} \int_{\Omega_h} \left( (\bm E_h^k)^2 + \right.$ $\left. (\bm B_h^k)^2 \right) dx$, the weak Gauss's law  $ \int_{\Omega_h} \left[ \nabla_w \cdot \bm E^k -  4 \pi e \left(\frac{\rho^k_h}{m} + \sum_p^N w_p \delta(\bm x - \bm X_p^k) \right) \right] \, \phi_h \, dx$ for all $\phi_h \in \mathring Q_{k+1}$, and $\nabla \cdot \bm B_h^k$ constraint. 
\end{proposition}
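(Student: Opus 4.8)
The plan is to replay the proof of Proposition~\ref{prop:dt_qqnrt} line by line for the flux scheme (\ref{eq:dt_dgdiv_rho}--\ref{eq:dt_dgdiv_U_k}), checking that every additional edge integral introduced in \eqref{eq:dt_dgdiv_rho} and \eqref{eq:dt_dgdiv_m} either vanishes or cancels against another edge integral, exactly as was done for the semi-discrete scheme in Proposition~\ref{prop:dgdiv_proof}. For \emph{total fluid mass} I would set $\phi_h = 1$ in \eqref{eq:dt_dgdiv_rho}: the volume term vanishes since $\nabla 1 = 0$ and the flux term vanishes since $\llbracket 1 \rrbracket = 0$, giving $\int_{\Omega_h}\rho_h^{k+1}\,dx = \int_{\Omega_h}\rho_h^{k}\,dx$. \emph{Total particle mass} is immediate because the weights $w_p$ are fixed.

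For \emph{total energy} I would form the discrete rate as in \eqref{eq:diff_row_1}--\eqref{eq:diff_row_3}, apply the AVF identities \eqref{eq:avf_method} and \eqref{eq:avf_energy}, insert the Hamiltonian derivatives \eqref{eq:functional_der_rel_hamil}, and then use the $L^2$-projections (onto $DG_{k+1}$ for the $\rho$- and $\bm M$-dependent integrands and onto $RT_k$ for $\bm w_h$) to identify the $\xi$-averaged integrands produced by the AVF gradient with the barred quantities appearing in \eqref{eq:dt_dgdiv_m} and \eqref{eq:dt_dgdiv_w}. I would then test \eqref{eq:dt_dgdiv_rho} with $\phi_h$ equal to that barred bracket times $c^2$, \eqref{eq:dt_dgdiv_m} with $\bm\mu_h = \bm w_h$, \eqref{eq:dt_dgdiv_E} with $\bm\nu_h = \bm E_h^{k+1/2}$, and \eqref{eq:dt_dgdiv_B} with $\bm\tau_h = \bm B_h^{k+1/2}$; the volume terms then cancel exactly as in Proposition~\ref{prop:dt_qqnrt}, and the particle--field energy exchange is resolved using \eqref{eq:dt_dgdiv_U_k} together with property \eqref{eq:D_prop_2}. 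The only new terms are the two flux families: the flux $\int_e (\bm n \times \bm M_h^{\theta_3})^{*}\cdot\llbracket\bm\mu_h\times\bm w_h\rrbracket$ vanishes for $\bm\mu_h = \bm w_h$ because $\bm w_h\times\bm w_h = 0$ pointwise on each side of every edge, while the density flux in \eqref{eq:dt_dgdiv_rho} (with the above $\phi_h$) and the second momentum flux in \eqref{eq:dt_dgdiv_m} (with $\bm\mu_h = \bm w_h$) both collapse to $\pm\sum_{e\in\mathcal E_h}\int_e (\rho_h^{k,\theta_2}\bm w_h)^{*}\cdot\left\llbracket\,\overline{\int_0^1(\cdots)\,d\xi}\,\right\rrbracket c^2$ and hence cancel, since they share the same upwind state and the same $\theta_2$. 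This gives $\mathcal H^{k+1} = \mathcal H^{k}$.

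The $\nabla\cdot\bm B_h$ \emph{constraint} is unchanged: \eqref{eq:dt_dgdiv_B} and \eqref{eq:dt_dgdiv_E} carry no surface terms (they were dropped because $\bm\nu_h,\bm E_h\in\mathring N_k$ and $\bm\tau_h,\bm B_h\in\mathring{RT}_k$), so testing \eqref{eq:dt_dgdiv_B} with $\bm\tau_h = \nabla_w\varphi_h$, $\varphi_h\in\mathring{DG}_k$, kills the right-hand side by \eqref{eq:prop_Nk}, and choosing $\varphi_h = \nabla\cdot(\bm B_h^{k+1}-\bm B_h^{k})$ forces $\nabla\cdot\bm B_h^{k+1} = \nabla\cdot\bm B_h^{k}$. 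For the \emph{weak Gauss law} I would test \eqref{eq:dt_dgdiv_E} with $\bm\nu_h = \nabla\phi_h$ for $\phi_h\in\mathring Q_{k+1}\subset DG_{k+1}$; the curl term vanishes by \eqref{eq:prop_Qk}, the left-hand side becomes $\partial_t$ of a weak divergence by \eqref{eq:weak_divergence}, and on the right I would invoke the density equation \eqref{eq:dt_dgdiv_rho} --- whose flux term vanishes here because $\llbracket\phi_h\rrbracket = 0$ for a continuous $\phi_h$ --- together with the discrete charge balance Lemma~\ref{prop:pic_mass_discrete}, yielding the telescoped weak Gauss law.

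I expect the main obstacle to be purely the bookkeeping in the energy step: one has to keep the several barred, $\xi$-averaged nonlinear expressions straight and confirm that the discrete test-function choices reproduce \emph{precisely} the AVF-averaged Hamiltonian derivatives, and that the flux pairs match up with identical upwind states $(\cdot)^{*}$ and identical $\theta_2$ (the value of $\theta_3$ being irrelevant once $\bm\mu_h = \bm w_h$). Once that correspondence is in place the cancellations are mechanical and word-for-word those of Propositions~\ref{prop:dt_qqnrt} and \ref{prop:dgdiv_proof}.
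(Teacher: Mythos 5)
Your proposal is correct and follows exactly the route the paper itself prescribes for this result: the paper omits the proof, stating only that it ``follows the proof in Proposition~\ref{prop:dt_qqnrt} but with the flux-terms canceled as in Proposition~\ref{prop:dgdiv_proof}'', which is precisely what you carry out. Your added details --- the $\llbracket \bm\mu_h \times \bm w_h\rrbracket$ flux vanishing for $\bm\mu_h=\bm w_h$, the density and momentum fluxes cancelling because they share the same upwind state and the same $\theta_2$, and the density flux dropping out of the Gauss-law step since $\llbracket\phi_h\rrbracket=0$ for $\phi_h\in\mathring Q_{k+1}$ --- are all accurate and consistent with the paper's semi-discrete argument.
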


\vspace{2.5mm}
\noindent \textbf{Solving the nonlinear system}. We use Picard's iteration to solve the non-linear system. At each iteration, we solve the linear system of equations using the conjugate gradient (CG) method with the Jacobi preconditioner.  Construction of a more robust solver is a topic of future work. For example, one can try to solve the fluid equations with the Newton-Raphson method, Maxwell's equations using a linear solver with the structure-preserving preconditioner from \cite{phillips2018scalable}, and the particles with Picard's iteration. These three sub-steps can be combined together using Strang-splitting and preserving the invariants; in much the same way as it was done for the particle-in-cell discretization in \cite{kormann_time}. 

\subsection{Explicit Runge-Kutta} \label{sec:explicit}

While the implicit AVF scheme proposed in the previous section features excellent conservation properties, it is expensive to evaluate due to its implicit nature. We therefore consider a SSP-RK propagator as an alternative. In particular, we will consider the order three SSP-RK scheme from \cite{Gottlieb2009} in our numerical experiments. SSP-RK methods consist of a convex combination of the explicit Euler steps. Here, we study the conservation properties of the explicit Euler discretization.
Let us first consider the case without particles.

\begin{proposition}
In the absence of particles, i.e. $N=0$, the explicit Euler method applied to (\ref{eq:qqnrt_rho}-\ref{eq:qqnrt_w}) and (\ref{eq:dgdiv_rho}-\ref{eq:dgdiv_w}) exactly conserves total mass $\int_{\Omega_h} \rho_h^k \, dx$, Gauss law  $\int_{\Omega} \left( \nabla_w \cdot \bm E^k - 4 \pi e \frac{\rho^k}{m} \right) \phi_h$ for all $\phi_h \in \mathring Q_{k+1}$, and $\nabla \cdot \bm B_h^k$ constraint. 
\end{proposition}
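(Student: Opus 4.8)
The plan is to rerun the arguments of Propositions~\ref{prop:h1h1_proof} and~\ref{prop:dgdiv_proof}, replacing the time derivative $\partial_t \bm u$ by the forward difference $(\bm u^{k+1}-\bm u^{k})/\Delta t$ and freezing every spatial operator on the right-hand side at the known level $k$. The key observation is purely structural: the explicit Euler update has the form ``increment$/\Delta t$ $=$ (the discrete spatial operator of the semi-discrete scheme, evaluated at step $k$)'', so the very test functions that made the right-hand side vanish in the semi-discrete conservation proofs now make the increment vanish. No nonlinearity obstruction appears, because total mass, the weak Gauss law and the divB constraint are affine functionals of the state; it is only the quadratic energy that is lost, which is why it is absent from the statement.

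Total mass and the divB constraint I expect to carry over essentially unchanged. Setting $\phi_h=1$ in the explicit-Euler analogue of the density equation \eqref{eq:qqnrt_rho}, resp.\ \eqref{eq:dgdiv_rho}, makes the volume term vanish since $\nabla 1 = 0$, and in the flux formulation the edge term vanishes as well because $\llbracket 1 \rrbracket = 0$; hence $\int_{\Omega_h}\rho_h^{k+1}\,dx = \int_{\Omega_h}\rho_h^{k}\,dx$. For divB, I would test the discretized Faraday equation with $\bm\tau_h = \nabla_w\varphi_h$, $\varphi_h\in\mathring{DG}_k$ (legitimate by \eqref{eq:weak_gradient}); by \eqref{eq:prop_Nk} one has $\nabla\times\bm E_h^k\in\mathring{RT}_k$ and $\nabla\cdot\nabla\times\bm E_h^k=0$, so the right-hand side vanishes after a second use of \eqref{eq:weak_gradient}, while the left-hand side reduces to $\int_{\Omega_h}\nabla\cdot(\bm B_h^{k+1}-\bm B_h^{k})\,\varphi_h\,dx/\Delta t$. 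Since $\nabla\cdot(\bm B_h^{k+1}-\bm B_h^{k})\in\mathring{DG}_k$ by \eqref{eq:prop_RTk}, taking it as $\varphi_h$ forces $\nabla\cdot\bm B_h^{k+1}=\nabla\cdot\bm B_h^{k}$. This step is identical for both formulations, as the Faraday equation carries no flux term for $\bm\tau_h\in\mathring{RT}_k$.

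For the weak Gauss law I would test the discretized Amp\`ere equation (analogue of \eqref{eq:qqnrt_E}, resp.\ \eqref{eq:dgdiv_E}) with $\bm\nu_h=\nabla\phi_h$, $\phi_h\in\mathring Q_{k+1}$, admissible because $\nabla\phi_h\in\mathring N_k$ by \eqref{eq:prop_Qk}. The curl term drops since $\nabla\times\nabla\phi_h=0$, and the particle term is simply absent as $N=0$. Rewriting the left-hand side with the weak-divergence definition \eqref{eq:weak_divergence} and invoking the discretized density equation with the same $\phi_h$ — admissible since $\mathring Q_{k+1}\subset Q_{k+1}$, resp.\ $\mathring Q_{k+1}\subset DG_{k+1}$ with the flux term killed by $\llbracket\phi_h\rrbracket=0$ — one replaces $\frac{4\pi e}{m}\int_{\Omega_h}\rho_h^{k}\bm w_h^{k}\cdot\nabla\phi_h\,dx$ by $\frac{4\pi e}{m}\int_{\Omega_h}(\rho_h^{k+1}-\rho_h^{k})\phi_h\,dx/\Delta t$. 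Rearranging yields $\int_{\Omega_h}\big[(\nabla_w\cdot\bm E_h^{k+1}-\tfrac{4\pi e}{m}\rho_h^{k+1}) - (\nabla_w\cdot\bm E_h^{k}-\tfrac{4\pi e}{m}\rho_h^{k})\big]\phi_h\,dx = 0$ for all $\phi_h\in\mathring Q_{k+1}$, which is the claim. Here it is essential that the discretized density and Amp\`ere equations use the \emph{same} frozen velocity $\bm w_h^k$ at level $k$; for explicit Euler this is automatic, whereas for the AVF scheme it required the matched averaging used in Proposition~\ref{prop:dt_qqnrt}.

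The only place that calls for care — rather than a real obstacle — is the bookkeeping of the flux terms \eqref{eq:weak_flux_1}--\eqref{eq:weak_flux_5} in the second formulation: each must be checked to vanish for the particular test functions used, which it does because $\phi_h=1$ and $\phi_h\in\mathring Q_{k+1}$ are continuous, so their jumps vanish, and the Faraday flux was already discarded for $\bm\tau_h\in\mathring{RT}_k$. Conversely, the reason energy is \emph{not} on the list is exactly the nonlinearity: its semi-discrete proof tests with state-dependent quantities ($\phi_h$ built from $\rho_h,\bm M_h$, $\bm\mu_h=\bm w_h$, $\bm\nu_h=\bm E_h$, $\bm\tau_h=\bm B_h$), and under explicit Euler this produces defects such as $\int_{\Omega_h}(\bm E_h^{k+1}-\bm E_h^{k})\cdot\bm E_h^{k}\,dx = \tfrac12\|\bm E_h^{k+1}\|_2^2 - \tfrac12\|\bm E_h^{k}\|_2^2 - \tfrac12\|\bm E_h^{k+1}-\bm E_h^{k}\|_2^2$, i.e.\ an $\mathcal O(\Delta t^2)$ per-step energy error consistent with the $\mathcal O(\Delta t^p)$ bound mentioned in the introduction. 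Finally, since each SSP-RK stage is a convex combination of explicit Euler steps and the three quantities above are affine in the state, they are preserved by the full SSP-RK propagator as well.
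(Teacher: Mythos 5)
Your proposal is correct and follows essentially the same route as the paper, which omits this proof with the remark that it proceeds as in Proposition~\ref{prop:dt_qqnrt}: test the density equation with $\phi_h=1$ for mass, Amp\`ere's equation with $\bm\nu_h=\nabla\phi_h$ combined with the density equation for the Gauss law, and Faraday's equation with $\bm\tau_h=\nabla_w\varphi_h$ for the divB constraint, with the flux terms vanishing for continuous test functions in the second formulation exactly as in Proposition~\ref{prop:dgdiv_proof}. Your added observations --- that the matching of $\rho_h^k\bm w_h^k$ between the density and Amp\`ere equations is automatic for explicit Euler, and that the three conserved quantities are linear in the state and hence survive the convex combinations of an SSP-RK step --- are accurate and consistent with the paper's discussion.
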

The proof proceeds as in Proposition \ref{prop:dt_qqnrt} and we omit it. 

When particles are present the explicit Euler does not preserve the Gauss law. To contain the Gauss’s law, we propose the following cleaning procedure.

\vspace{2.5mm}
\noindent \textbf{Exact Gauss law cleaning.} We adapt the idea of cleaning from \cite{brackbill}.
We consider functions $a_h$ and $b_h$ on the space $\mathring Q_{k+1}$. Let $\bm E_h \in \mathring N_k$ be the electric field that needs to be cleaned and $f$ be the right hand of the Gauss's law $\nabla  \cdot \bm E = f $. We solve two Poisson equations to compute the potentials $a_h$ and $b_h$
\begin{align}
\int_{\Omega_h} \nabla_w \cdot \nabla a_h \, \phi_h \, dx & = \int_{\Omega_h} \nabla_w \cdot \bm E_h \, \phi_h \, dx \hspace{1cm} \forall \phi_h \in \mathring Q_{k+1} \\
\int_{\Omega_h} \nabla_w \cdot \nabla b_h \, \phi_h \, dx & = \int_{\Omega_h} f \, \phi_h \, dx \hspace{1cm} \forall \phi_h \in \mathring Q_{k+1}
\end{align}
or equivalently 
\begin{align}
\int_{\Omega_h} \nabla a_h \cdot \nabla \phi_h \, dx & = \int_{\Omega_h} \bm E_h \cdot \nabla \phi_h \, dx \hspace{1cm} \forall \phi_h \in \mathring Q_{k+1} \\
\int_{\Omega_h} \nabla b_h \cdot \nabla \phi_h \, dx & = -\int_{\Omega_h} f \, \phi \, dx \hspace{1cm} \forall \phi_h \in \mathring Q_{k+1}
\end{align}
Next, we correct the electric field $\bm E_h$ by subtracting the potential associated with the polluted divergence and adding the potential corresponding to the correct divergence.
\begin{align} \label{eq:exact_gauss_cleaning}
\bm E'_h = \bm E_h - \nabla a_h + \nabla b_h
\end{align}
The Gauss's law holds
\begin{align}
\int_{\Omega_h} \nabla_w \cdot \bm E'_h \, \phi_h \, dx = \int_{\Omega_h} \left(  \underbrace{\nabla_w \cdot \bm E_h - \nabla_w \cdot \nabla a_h}_{=0} + \nabla_w \cdot \nabla b_h \right) \, \phi_h dx = \int_{\Omega_h} f \, \phi_h dx \hspace{1cm} \forall \phi_h \in \mathring Q_{k+1}
\end{align}

Finally, let us consider the question whether the energy is preserved up to the time-discretization order. Let us consider the case when the Gauss's cleaning is not used and there are only particles in the system. Let us assume that at each time-step the solution can be represented by $\bm a^k = \bm a_0(t) + \Delta t^p \bm a_1(t)$ where $\bm a_0$ is the exact value and $\bm a_1$ is a perturbation that is bounded and is independent of $\Delta t$. The energy difference due to particles is given by $\sum_p^N w_p (\gamma(\bm U_p^{k+1})-1) m \, c^2 - \sum_p^N w_p (\gamma(\bm U_{0,p})-1) m \, c^2$. We recall that $\gamma(\bm u) = \sqrt{1 + \frac{\bm u \cdot \bm u}{m ^2 c^2}}$. When $\bm u \cdot \bm u < m^2 c^2$ we can expand the $\gamma$-factor in a Taylor series up to some order $Z$.  After simple algebraic manipulations the energy error due to particles can be expressed as
\begin{align}
\sum_j^N w_j \sum_{i=1}^Z \binom{Z}{i}  \left[ \left( \frac{\bm U_{0,j} \cdot \bm U_{0,j} + 2  \Delta t^p  \bm U_{0,j} \cdot \bm U_{1, j} +\Delta t^{2p} \bm U_{1,j} \cdot \bm U_{1,j}}{m^2 \, c^2} \right)^i - \left( \frac{\bm U_{0,j} \cdot \bm U_{0,j}}{m^2 c^2} \right)^i \right] + \mathcal O \left( \left( \frac{\bm U^k_j \cdot \bm U^k_j}{m^2c^2} \right)^{Z+1} \right)
\end{align}
if we assume that $\frac{\bm U^k_j \cdot \bm U^k_j}{m^2c^2} \ll 1$, we can neglect the terms for $i \geq 2$. The leading term for $i=1$ then shows that energy is preserved up to the order $\mathcal O \left( \Delta t^p \right)$. If $\frac{\bm U^k \cdot \bm U^k}{m^2 \, c^2} > 1$, the Taylor expansion does not apply and we cannot analyze the order in this way. Since our numerical scheme is not bound-preserving, the convergence order of the energy error can be deteriorated for fast particles. The same conclusions hold for fluids and the hybrid case. Lastly, the case with Gauss's cleaning we consider numerically in the upcoming sections.

\section{Numerical examples} \label{sec:numerical_experiments}

We have implemented the developed discretizations based on the \texttt{deal.II} library \cite{dealii} and we investigate convergence and conservation properties in this section. Moreover, we compare a hybrid fluid-particle model to a full particle simulation for the example of a plasma wake simulation.

\subsection{Convergence of the fluid system.}

 To study the convergence of our discretization, we consider a three-dimensional domain $\Omega=[-1,1]^3$ and the following manufactured solutions
\begin{align} \label{eq:hbc_E}
\bm E(x,y,z,t) &= \begin{bmatrix}
-\sin(t) \, \cos(\pi x) \, \sin(\pi y) \, \sin(\pi z) \\
\sin(t) \, \sin(\pi x) \, \cos(\pi y) \, \sin(\pi z) \\
\sin(t) \, \sin(\pi x) \, \sin(\pi y) \, \cos(\pi z) 
\end{bmatrix} \\ \label{eq:hbc_B}
\bm B(x, y, z, t) &= \begin{bmatrix}
-\frac{1}{2} \cos(t) \, \sin(\pi x)\, \cos(\pi y) \, \cos(\pi z) \\
\frac{1}{4} \cos(t) \, \cos(\pi x)\, \sin(\pi y) \, \cos(\pi z) \\
\frac{1}{4} \cos(t) \, \cos(\pi x)\, \cos(\pi y) \, \sin(\pi z) 
\end{bmatrix} \\\label{eq:hbc_rho}
\rho &= 2 - \frac{m}{4 \, e} \sin(t) \, \sin(\pi x) \, \sin(\pi y) \, \sin(\pi z) \\ \label{eq:hbc_m}
\bm M &= \begin{bmatrix}
\sin(t) \, \sin(\pi x)\, \cos(\pi y) \, \cos(\pi z) \\
\sin(t) \, \cos(\pi x)\, \sin(\pi y) \, \cos(\pi z) \\
\sin(t) \, \cos(\pi x)\, \cos(\pi y) \, \sin(\pi z) 
\end{bmatrix} 
\end{align}
We add source terms $S_E$, $S_B$, $S_\rho$, $S_M$ to the right-hand side of the fluid equations (\ref{eq:deg_zero_hybrid_rho_relativ}-\ref{eq:deg_zero_hybrid_Mk_relativ}) and Maxwell equations (\ref{eq:maxwell_e}-\ref{eq:maxwell_b}). The source terms are constructed such that the manufactured solutions (\ref{eq:hbc_E} - \ref{eq:hbc_m}) are exact solutions. We note that the manufactured solutions satisfy $\bm n \times \bm E|_{\partial \Omega} = 0$, $\bm n \cdot \bm B|_{\partial \Omega}=0$, $\bm n \cdot \bm M|_{\partial \Omega}=0$, $\nabla \cdot \bm B = 0$, $\nabla \cdot \bm E = \frac{4 \pi e}{m} (\rho - 2)$. We use normalized physical quantities $e=-1$, $m=1$, $c=1$. We consider uniform trangulations with elements of size $h=2^{-i} h_0$ where $i={2,3,4,5}$ and $h_0=2$. The parameter $k$ that controls the degree of the finite element spaces was set to $\{0,1,2\}$. We use the implicit time-discretization based on the average-vector field method with a small time-step $\Delta t=0.00025$ to keep the temporal errors below the spatial errors. 
The average number of Picard's iterations needed for the simulations in Figure \ref{fig:hbc}  is 4. We measured the errors at $t=0.5$ using the $L^2(\Omega)$-norm that we denoted by $\| \cdot \|$.
\begin{figure}[H] 
\includegraphics[width=\textwidth]{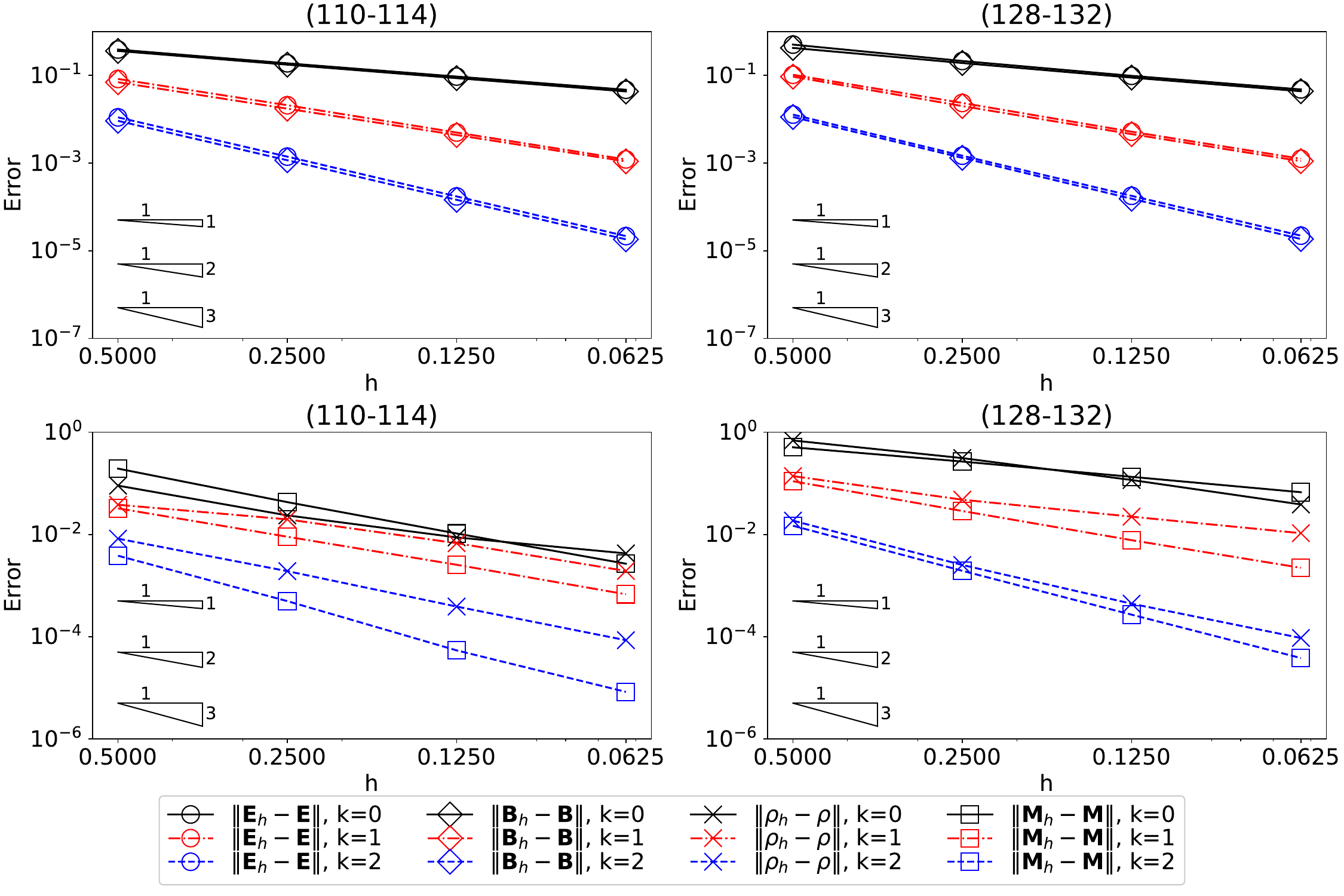}    
    \caption{$L^2$-errors in the electric field, magnetic field, fluid density, fluid momentum at time t=$0.5$ with the boundary conditions $\bm n \cdot \bm M|_{\partial \Omega}=0$, $\bm n \times \bm E|_{\partial \Omega} = 0$, $\bm n \cdot \bm B|_{\partial \Omega}=0$. The plots to the left are for the method (\ref{eq:dt_qqnrt_rho}-\ref{eq:dt_qqnrt_w}), the plots to the right are for the method (\ref{eq:dt_dgdiv_rho}-\ref{eq:dt_dgdiv_w}).}
    \label{fig:hbc}
\end{figure}
Both schemes show convergence of order $k+1$ in $\bm E$, $\bm B$, and $\bm M$. However, we observe an order reduction in $\rho$.

\subsection{Conservation properties} \label{sec:conservation}
We consider a three-dimensional domain $\Omega=[-1,1]^3$ and run numerical simulations to study the conservation properties of the schemes (\ref{eq:dt_qqnrt_rho} - \ref{eq:dt_qqnrt_U_k}) and (\ref{eq:dt_dgdiv_rho} - \ref{eq:dt_dgdiv_U_k}) in Section \ref{sec:time_integrators}. We use finite-elements with the degree parameter $k=0$.

We consider the following initial conditions
\begin{align} \label{eq:cs_ic_e_b}
& \bm E(x,y,z,t) = \begin{bmatrix}
- \cos(x \, y) \\
\cos(x \, z) \\
\sin(x \, y)
\end{bmatrix} \, 
\bm B(x,y,z,t) = \begin{bmatrix}
-\frac{1}{2} \sin(\pi \, x) \, \cos(\pi \, y) \, \cos(\pi \, z) \\
\frac{1}{4} \cos(\pi \, x) \, \sin(\pi \, y) \, \cos(\pi \, z) \\
\frac{1}{4} \cos(\pi \, x) \, \cos(\pi \, y) \, \sin(\pi \, z)
\end{bmatrix} \\ \label{eq:cs_ic_rho_m}
&\rho = 2 + \frac{m}{4 \pi e} y \, \sin(x \, y) \hspace{1cm}
\bm M(x,y,z,t) = \begin{bmatrix}
\frac{1}{4} \, \sin(\pi \, x) \\
\frac{1}{4} \, \sin(\pi \, y) \\
\frac{1}{4} \, \sin(\pi \, z)
\end{bmatrix}
\end{align}
We set the species mass and charge to unit values, $m=1$, $e=-1$. We use the speed of light $c=1$. On the boundary $\partial \Omega$, we assign constraints $\bm n \cdot \bm M|_{\partial \Omega}=0$, $\bm n \times \bm E|_{\partial \Omega}=0$, $\bm n \cdot \bm B|_{\partial \Omega}=0$. The initial condition for $\bm E$ does not satisfy the constraint $\bm n \times \bm E|_{\partial \Omega}=0$, and so we project it onto $\mathring N_k \subset N_k \cap \mathring H(\text{curl}, \Omega)$ using $L^2(\Omega)$ projection. We consider uniform mesh of size $h=0.125$. 

We draw the particle positions $\bm X_k$ from the Gaussian distribution
\begin{align}
f(x,y,z) =  \frac{1}{f_0} \begin{cases} \exp(-10 \, ( x^2 + y^2 + z^2 )) \hspace{0.5cm}  |x|<0.5 \text{ and } |y|<0.5 \text{ and } |z|<0.5 \\
 0  \hspace{3.1cm} \text{ otherwise}
\end{cases}
\end{align}
where $f_0$ is a normalizing factor such that $\int_{\Omega} f \, dx = 1$. This distribution ensures that particles can travel one quarter of the domain before escaping through the boundaries. The velocities of the particles, $\bm U_k$, are assigned randomly from $[0,1]$ with uniform probability. The total number of particles used is $10^5$. To keep the particle mass and current on the unit scale, we set the weights of each particle to $10^{-5}$.  

We calculate the errors as follows:
\begin{itemize}
\item mass error : $\frac{\text{mass}(t) - \text{mass}(0)}{\text{mass}(0)}$ where $\text{mass} = \int_\Omega \frac{\rho_h}{m} \, dx + \sum_k^N w_k $\\
\item energy error: $\frac{H(t) - H(0)}{H(0)}$ where $H(t)$ is given by \eqref{eq:relativistic_hamiltonian_hybrid_degree_zero_conserved}. \\
\item Gauss's law error: $ \max_i \int_{\Omega} \left[ \nabla \cdot \bm E - 4 \pi e  \left(\frac{\rho}{m} + \sum_k^N w_k \delta(\bm x - \bm X_k) - n_0 \right) \right] \, \phi_i \, dx$ where $\phi_i$ are the basis of $\mathring Q_{k+1}$ \\
\item divB constraint error: $\|\nabla \cdot \bm B_h \|_{L^2}$
\end{itemize}

We run until time $t=0.3$.
We start with the time-step $\Delta t=0.005$ and adaptively reduce it to make sure the fixed-point converges. At $t=0$, we use the exact Gauss's law cleaning proposed in Section \ref{sec:explicit} to clean the electric field and satisfy the Gauss's law. We approximate AVF integrals with four-point Gauss quadrature.

Figure \ref{fig:implicit} shows conservation of total mass, total energy, Gauss law and divB constraint. The errors are up to the tolerance of the CG solver set to $10^{-12}$.

\begin{figure}[H] 
\includegraphics[width=\textwidth]{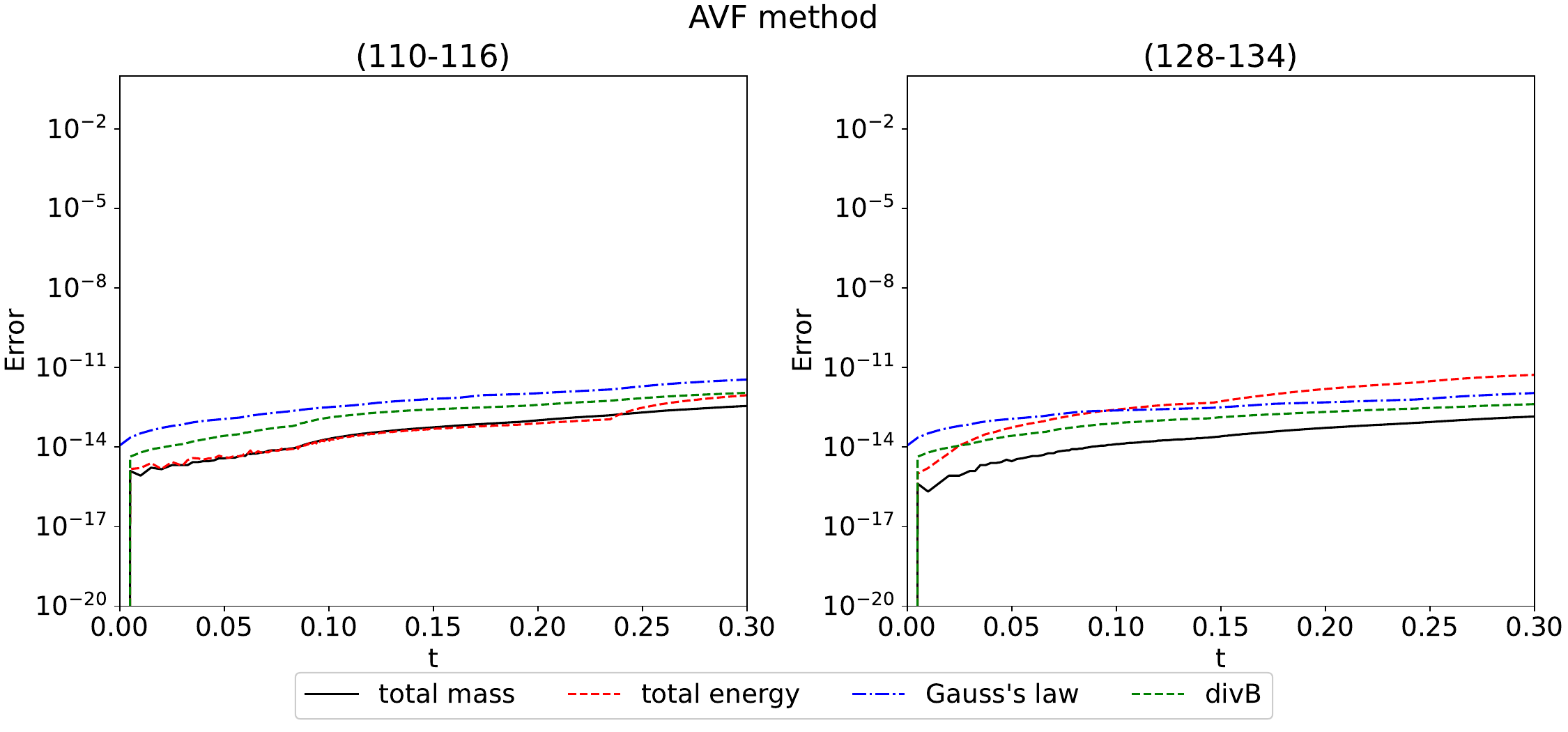}    
    \caption{Errors in the total mass, total energy, Gauss's law, and divB constraint for t=$[0,0.3]$. The plots to the left are for the method (\ref{eq:dt_qqnrt_rho} - \ref{eq:dt_qqnrt_U_k}), the plots to the right are for the method (\ref{eq:dt_dgdiv_rho} - \ref{eq:dt_dgdiv_U_k}).}
    \label{fig:implicit}
\end{figure}

Next, we repeat the experiment with the SSP-RK method of order three. We use the semi-discrete method (\ref{eq:qqnrt_rho}-\ref{eq:qqnrt_U_k}) and integrate it with the SSP-RK method in time. We consider the cases with $N=10^5$ and $N=0$ number of particles. We set the time-step size $\Delta t $ to $\{ 5 \times 10^{-5}, 1 \times 10^{-4}, 2 \times 10^{-4}, 5 \times 10^{-4} \}$. In view of the discussion in Section \ref{sec:explicit}, we set $c=10$ to make sure the particles velocities are below the speed of light. Figure \ref{fig:ssp_rk_c10} shows that for $N=10^5$ the scheme preserves total mass and the divB constraint. In the absence of particles, $N=0$, the scheme additionally preserves Gauss's law. The energy rate is also shown in both cases. 

\begin{figure}[H] 
\includegraphics[width=\textwidth]{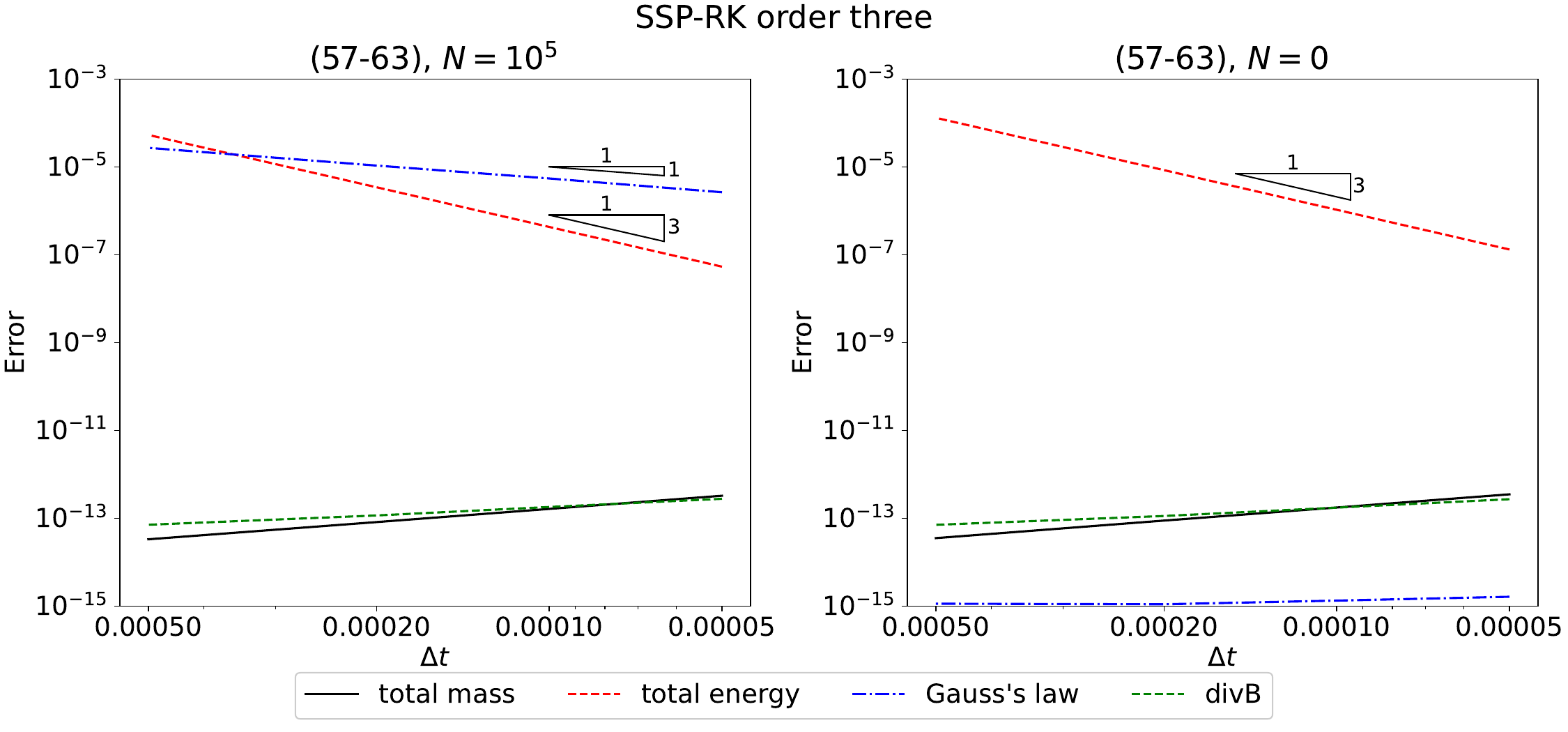}    
    \caption{Errors in the total mass, total energy, Gauss's law, and divB constraint for t=$[0,0.3]$ for the method (\ref{eq:qqnrt_rho}-\ref{eq:qqnrt_U_k}) integrated in time with SSP RK method of order three. The speed of light $c=10$. The plots to the left are for $N=10^5$ particles, while the plots to the right are for $N=0$, i.e. without particles}
    \label{fig:ssp_rk_c10}
\end{figure}

In Figure \ref{fig:ssp_rk_c10_with_gauss} we repeat the experiment but also apply the Gauss's cleaning every 100 steps. The hybrid scheme conserves total mass, Gauss's law, divB constraints while the total energy is conserved to first order. 

\begin{figure}[H] 
\centering
\includegraphics[width=0.5 \textwidth]{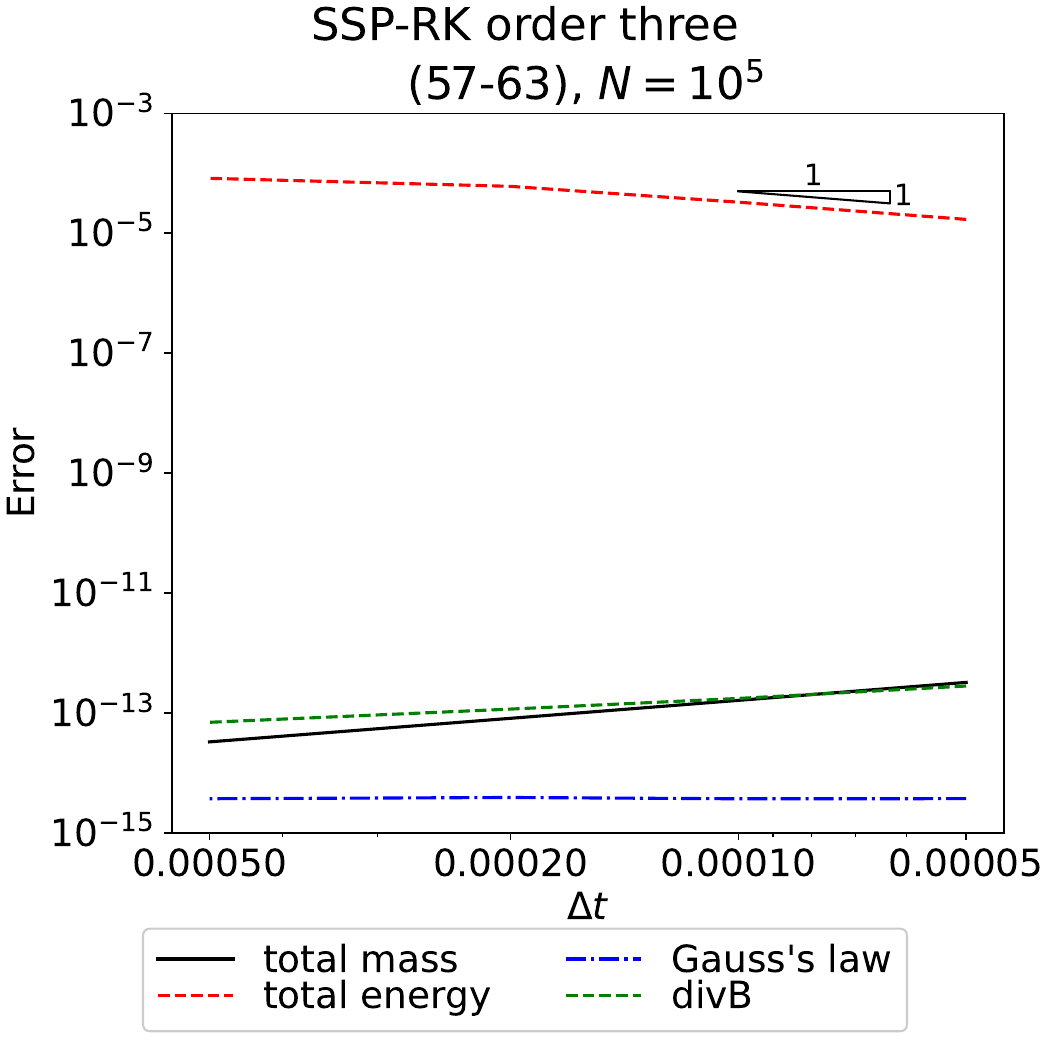}    
    \caption{Errors in the total mass, total energy, Gauss's law, and divB constraint for t=$[0,0.3]$ for the method (\ref{eq:qqnrt_rho}-\ref{eq:qqnrt_U_k}) integrated in time with SSP RK method of order three. The speed of light $c=10$. Gauss's cleaning is applied every 100 time-steps. The plots are for $N=10^5$ particles.}
    \label{fig:ssp_rk_c10_with_gauss}
\end{figure}

As discussed in Section \ref{sec:explicit}, the energy order may be lower than the order of the integrator when particles exceed the speed of light. This may occur because the model (\ref{eq:deg_zero_hybrid_rho_relativ}-\ref{eq:maxwell_div_b}) imposes no limit on the speed of fluid and particles. In Figure \ref{fig:ssp_rk} we set $c=1$ and repeat the experiment; we observe that the energy order reduces to second order for the case with particles. 

\begin{figure}[t] 
\includegraphics[width=\textwidth]{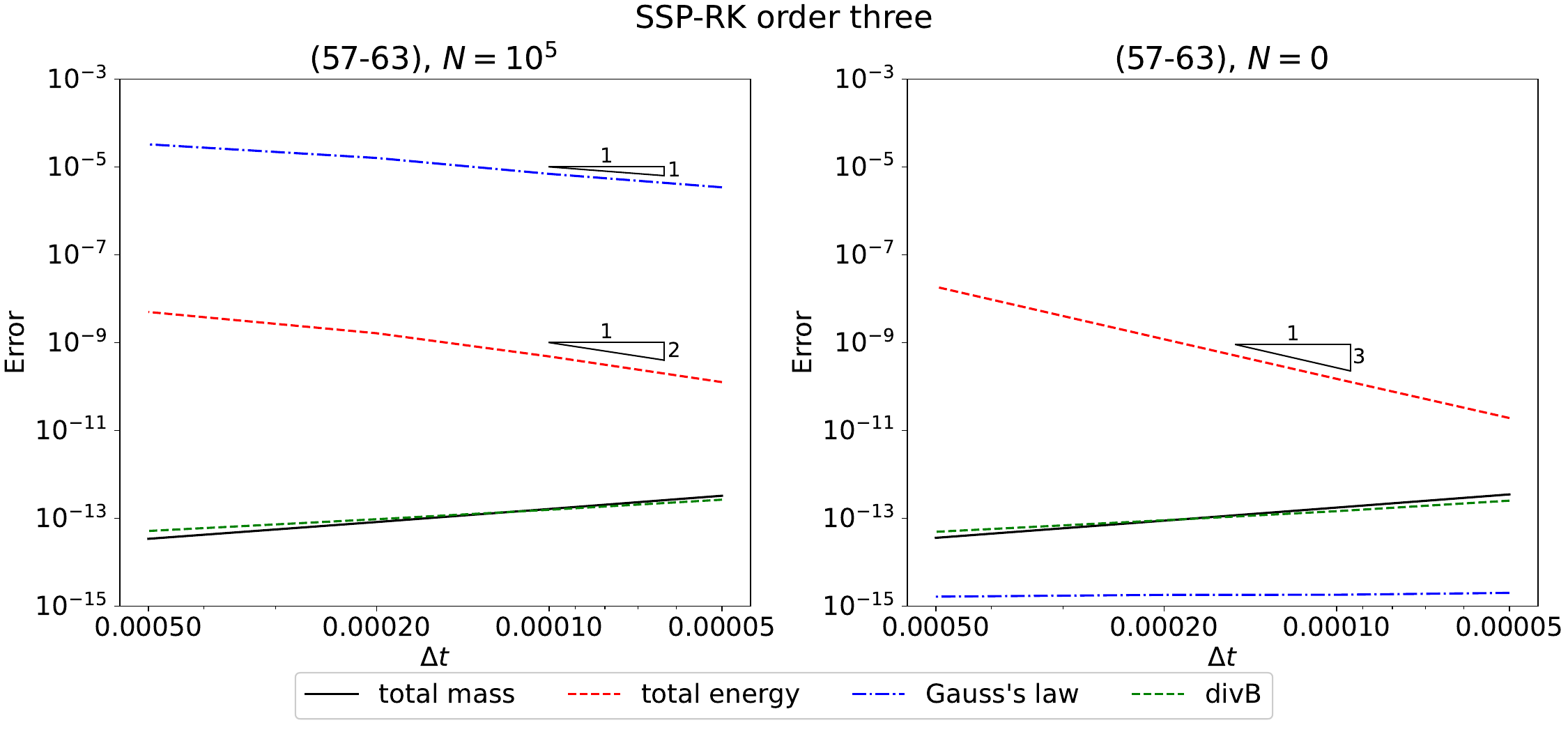}    
    \caption{Errors in the total mass, total energy, Gauss's law, and divB constraint for t=$[0,0.3]$ for the method (\ref{eq:qqnrt_rho}-\ref{eq:qqnrt_U_k}) integrated in time with SSP RK method of order three. The speed of light $c=1$. The plots to the left are for $N=10^5$ particles, while the plots to the right are for $N=0$, i.e. without particles.}
    \label{fig:ssp_rk}
\end{figure}

Table \ref{table:summary} summarizes conservation properties of the time-stepping methods from Section \ref{sec:time_integrators}.
\begin{table}[h!]
\begin{center}
\begin{tabular}{ c | c | c |  c | c | c}
Time-integrator & Mass & Energy & Gauss's law & divB & comments \\
\hline
 AVF (Section \ref{sec:avf}) & \tikzcmark & \tikzcmark & \tikzcmark & \tikzcmark & expensive \\ 
 SSP RK 3 with $N=0$ (Section \ref{sec:explicit}) & \tikzcmark & order 3  &\tikzcmark  & \tikzcmark & w.o. particles \\
   SSP RK 3 with $N>0$ (Section \ref{sec:explicit}) & \tikzcmark  & order 1 & \tikzcmark & \tikzcmark   &  w. particles; w. cleaning 
\end{tabular}
\end{center}
\caption{Conservation properties of time-stepping methods from Section \ref{sec:time_integrators}. Tick marks indicate exact conservation; otherwise the numerical convergence order with respect to the time-step is indicated.}
\label{table:summary}
\end{table}

\subsection{Plasma wake simulation}
In this section, we simulate a wake-field due to a plasma beam propagating at a speed close to the speed of light in a uniform background plasma. We employ the method (\ref{eq:qqnrt_rho}-\ref{eq:qqnrt_U_k}) with the SSP Runge-Kutta time-integrator of order three. We use particles to discretize the plasma beam and use the fluid description to model the background plasma. All simulations are performed in the laboratory frame of reference.

The setup is a modification of the setup in \cite{warpx_plasma_wake, warpx_general_paper}. We consider a three-dimensional box $[-200 \, \mu m, 200 \, \mu m]^3$ with $60 \times 60 \times 64$ cells. 

The plasma beam is positioned at $[-20 \mu m, 20 \mu m] \times [-20 \mu m, 20 \mu m] \times [-150 \mu m, -100 \mu m]$ with the initial velocity $[0, 0, 2.9 \times 10^8 \, m/s]$. For the beam we use in total $8 \times 10^4$ particles with the weight $w = 10^4$. This corresponds to the plasma density of $10^{22} \, m^{-3}$. The particles are distributed uniformly.

At time $t=0$, we assume the background plasma has the density $n(0) = 10^{22} \, m^{-3}$. The background plasma is assumed to be at rest with zero momentum $\bm M(0)=0$. 

We set the electric field $\bm E(0)=0$ and apply Gauss's law cleaning at $t=0$; for the magnetic field we assume $\bm B(0)=0$. We model only the electrons and assume the ions are stationary.

We use periodic boundary conditions for the fields $\rho, \bm M, \bm E, \bm B$ on the faces $x=\pm 200 \, \mu m$ and $y=\pm 200 \, \mu m$. On the faces $z=\pm 200 \, \mu m$ , we use the perfect-electric conductor boundary for the electric and magnetic fields: $\bm n \times \bm E = 0$ and $\bm n \times \bm B = 0$.  For the density $\rho$ and for the momentum $\bm M$, we assume zero flux boundary $\bm n \cdot \bm M =0$. We set the time-step to  $\Delta t=2 \times 10^{-17} s$.

To compare the results, we consider a reference simulation where the background plasma is discretized with the particle-in-cell method (\ref{eq:qqnrt_E}-\ref{eq:qqnrt_U_k}) using $64 \times 10^6$ particles with the weight $w=10^4$. We use SSP-RK method of order three with the time-step $\Delta t=10^{-17} s$. For the visualization of the density $\rho$ we project the particles on discontinuous Galerkin space $DG_0$. For particles, we use periodic boundaries  on the faces $x=\pm 200 \, \mu m$ and $y=\pm 200 \, \mu m$ and open boundaries on the faces $z=\pm 200 \, \mu m$.

We clean the Gauss's law every $100$ time-steps according to \eqref{eq:exact_gauss_cleaning}. 

In Figure \ref{fig:plasma_wake} we compare the $E_z$ component the fluid simulation with the reference particle simulation. The upper row (Figure \ref{fig:first}-\ref{fig:third}) shows the evolution of the plasma wake for the hybrid formulation. The lower row (Figure \ref{fig:fourth}-\ref{fig:sixth}) shows the evolution of the plasma wake for the reference particle simulation.

\begin{figure}[H]
\centering
\begin{subfigure}{0.32\textwidth}
    \includegraphics[width=\textwidth]{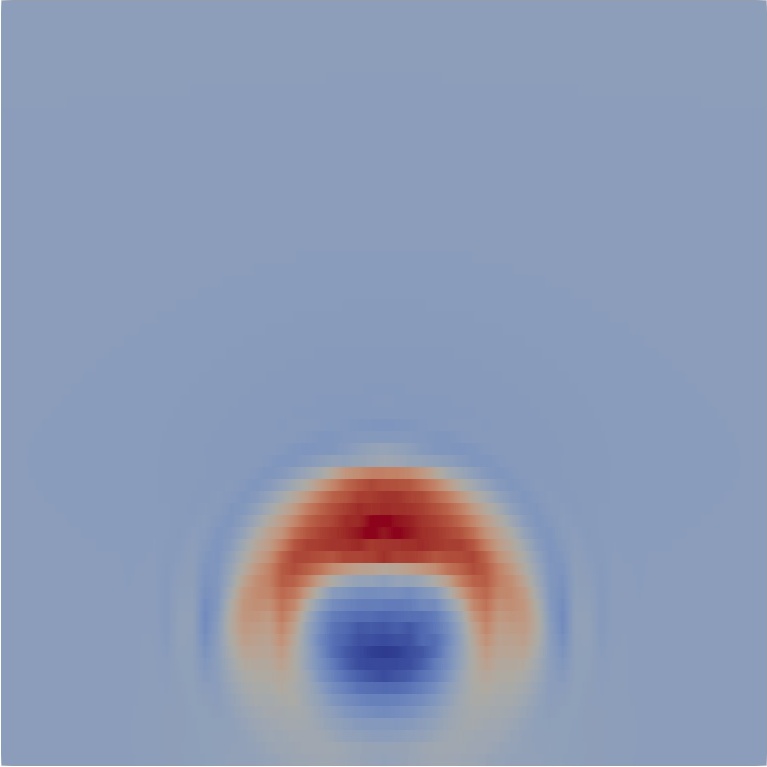}
    \caption{$t=2 \times 10^{-13} s$}
    \label{fig:first}
\end{subfigure}
\hfill
\begin{subfigure}{0.32\textwidth}
    \includegraphics[width=\textwidth]{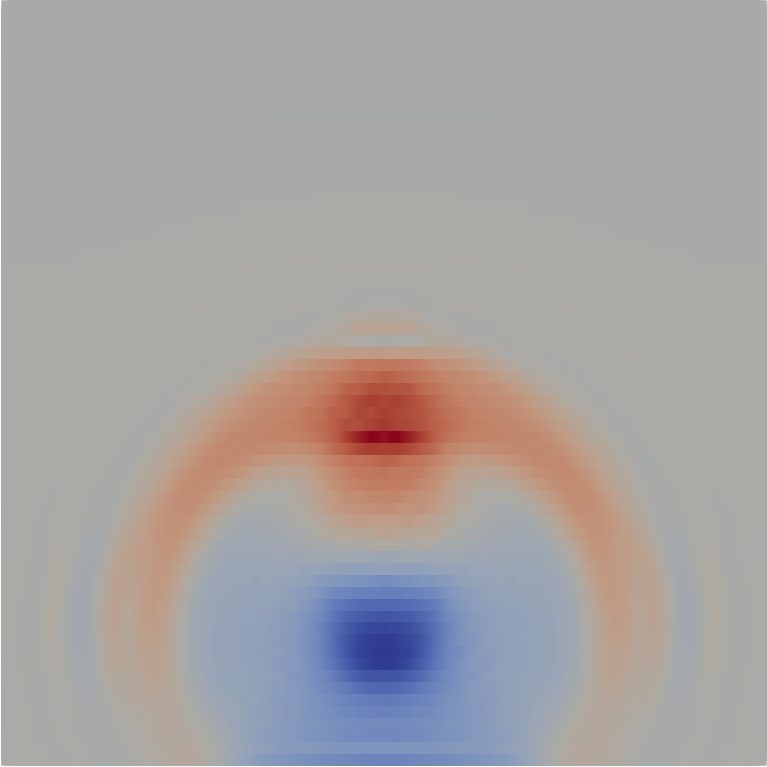}
    \caption{$t=4 \times 10^{-13} s$}
    \label{fig:second}
\end{subfigure}
\hfill
\begin{subfigure}{0.32\textwidth}
    \includegraphics[width=\textwidth]{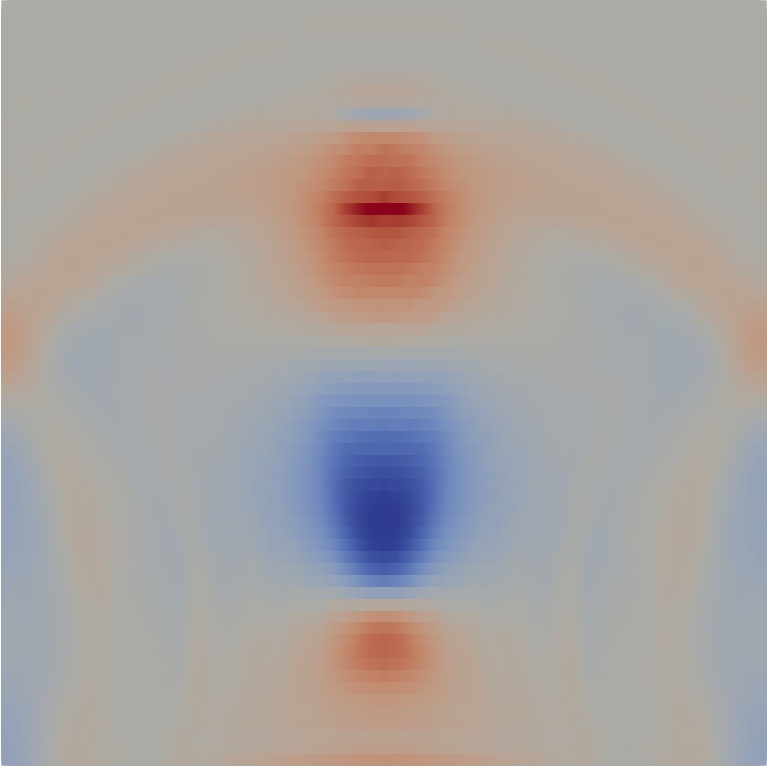}
    \caption{$t=8 \times 10^{-13} s$}
    \label{fig:third}
\end{subfigure}
\centering
\begin{subfigure}{0.32\textwidth}
    \includegraphics[width=\textwidth]{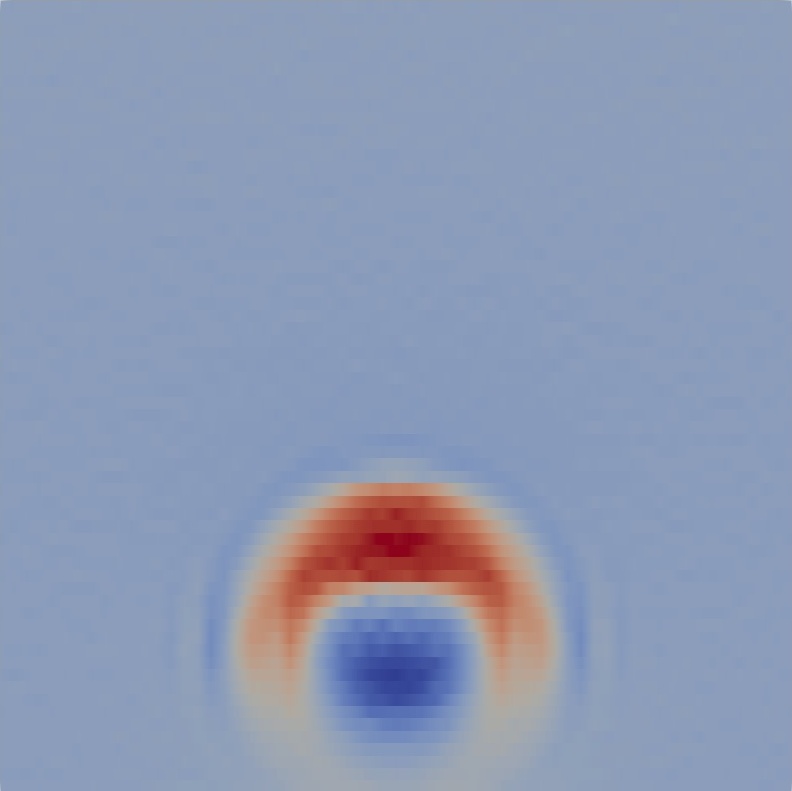}
    \caption{$t=2 \times 10^{-13} s$}
    \label{fig:fourth}
\end{subfigure}
\hfill
\begin{subfigure}{0.32\textwidth}
    \includegraphics[width=\textwidth]{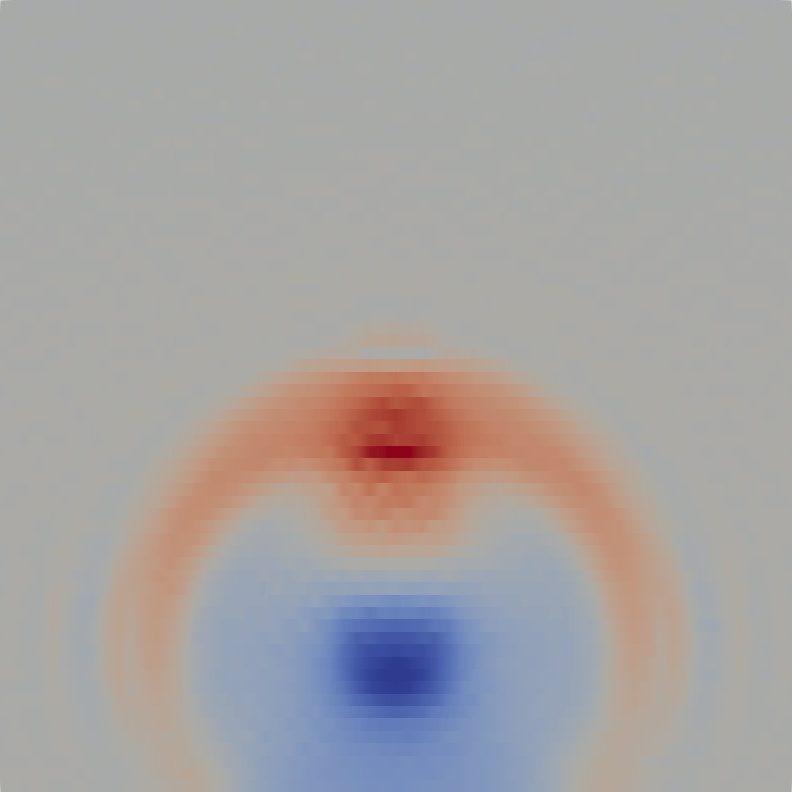}
    \caption{$t=4 \times 10^{-13} s$}
    \label{fig:fifth}
\end{subfigure}
\hfill
\begin{subfigure}{0.32\textwidth}
    \includegraphics[width=\textwidth]{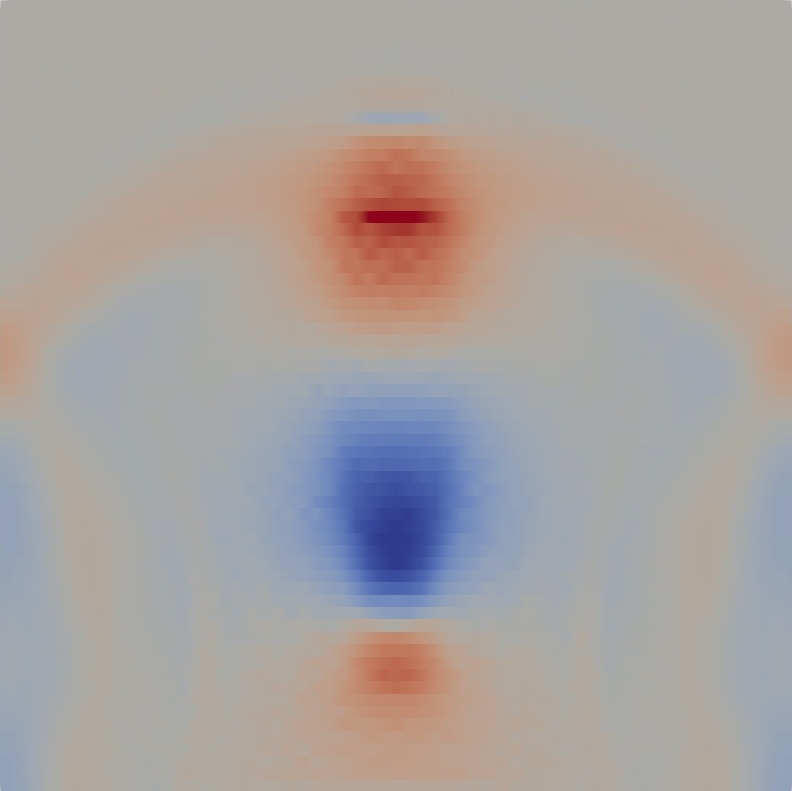}
    \caption{$t=8 \times 10^{-13} s$}
    \label{fig:sixth}
\end{subfigure}

\caption{Plasma-wake generated by the hybrid formulation (\ref{eq:qqnrt_rho}-\ref{eq:qqnrt_U_k})  in Figures (a)-(b) versus the reference particle discretization in Figures (d)-(f). We show the slices through the origin, perpendicular to the $x$-axis. Illustrated is the electric field component $E_z$ at $t=2 \times 10^{-13}$, $t=4 \times 10^{-13}$, $t=8 \times 10^{-13}$. $Z$-coordinate is pointing upwards.}
\label{fig:plasma_wake}
\end{figure}

In Figure \ref{fig:rho_plasma_wake} we compare the density $\rho$ of the fluid simulation with the reference particle simulation. Notice the slight perturbation in Figure \ref{fig:rho_second} near the lower boundary $z=-200 \, \mu m$ that is seen in the hybrid model. This perturbation is excited by the beam and disappears in Figure \ref{fig:rho_third} as the beam travels away from the boundary. This perturbation is not seen in the reference particle simulation as we use open boundaries at $z=-200 \, \mu m$ that allow particles to escape.

\begin{figure}[H]
\centering
\begin{subfigure}{0.32\textwidth}
    \includegraphics[width=\textwidth]{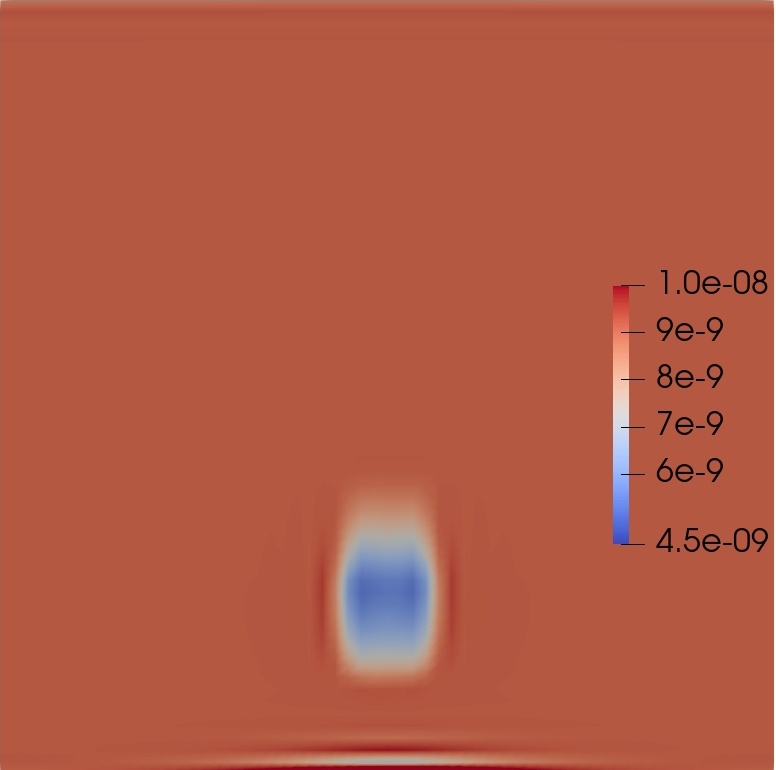}
    \caption{$t=2 \times 10^{-13} s$}
    \label{fig:rho_first}
\end{subfigure}
\hfill
\begin{subfigure}{0.32\textwidth}
    \includegraphics[width=\textwidth]{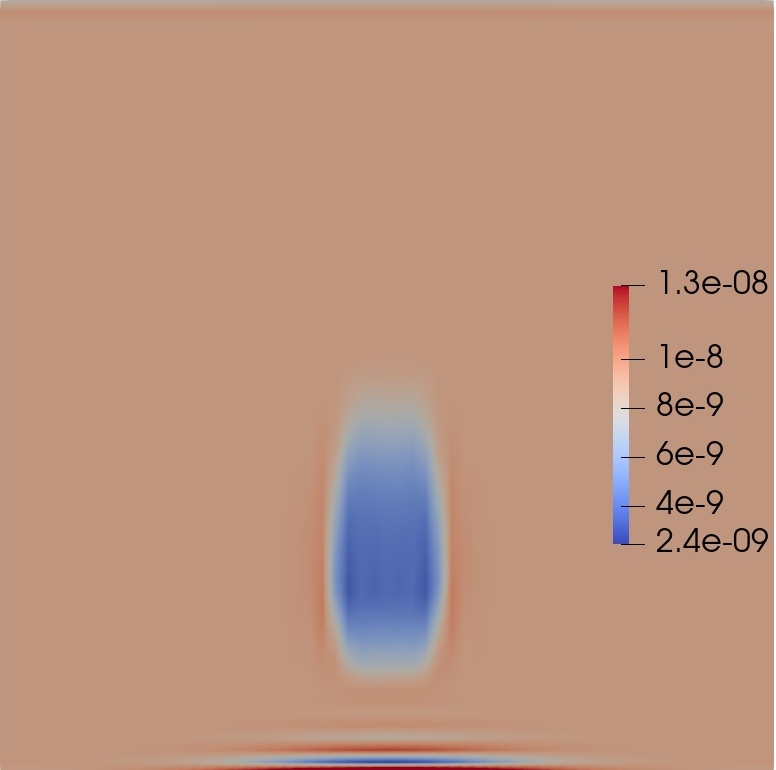}
    \caption{$t=4 \times 10^{-13} s$}
    \label{fig:rho_second}
\end{subfigure}
\hfill
\begin{subfigure}{0.32\textwidth}
    \includegraphics[width=\textwidth]{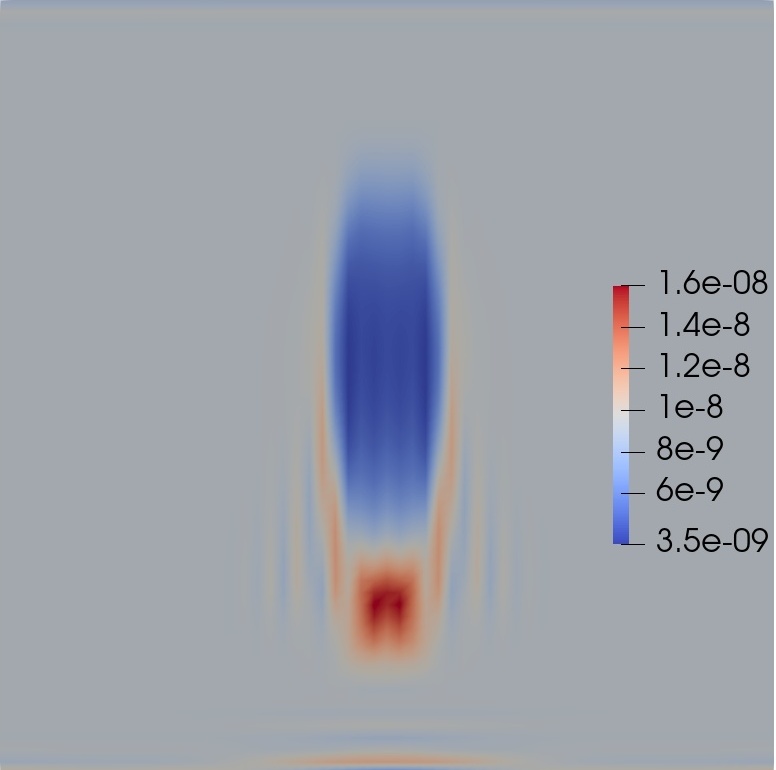}
    \caption{$t=8 \times 10^{-13} s$}
    \label{fig:rho_third}
\end{subfigure}
\centering
\begin{subfigure}{0.32\textwidth}
    \includegraphics[width=\textwidth]{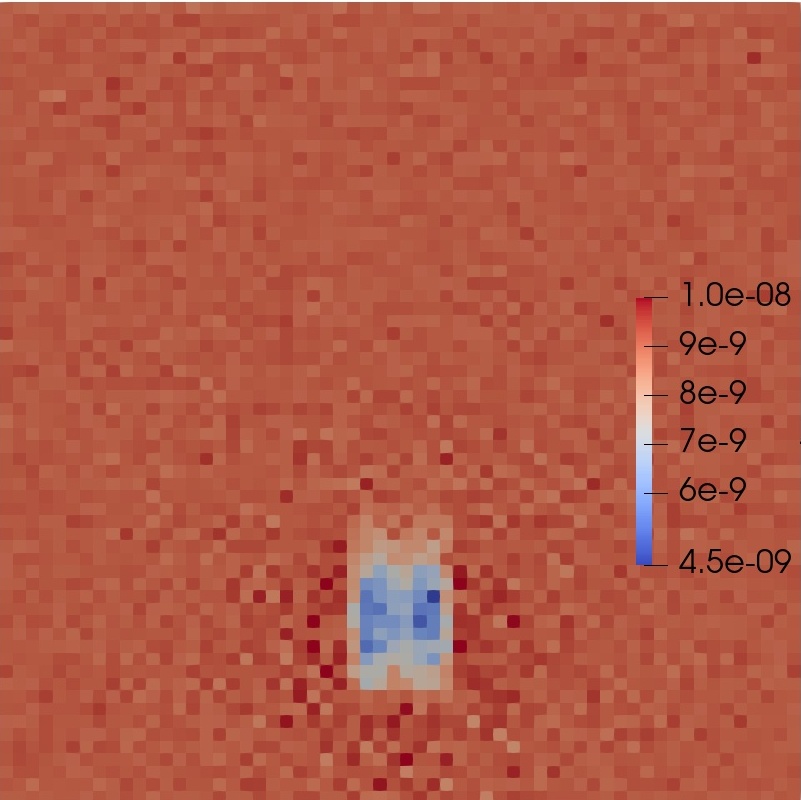}
    \caption{$t=2 \times 10^{-13} s$}
    \label{fig:rho_fourth}
\end{subfigure}
\hfill
\begin{subfigure}{0.32\textwidth}
    \includegraphics[width=\textwidth]{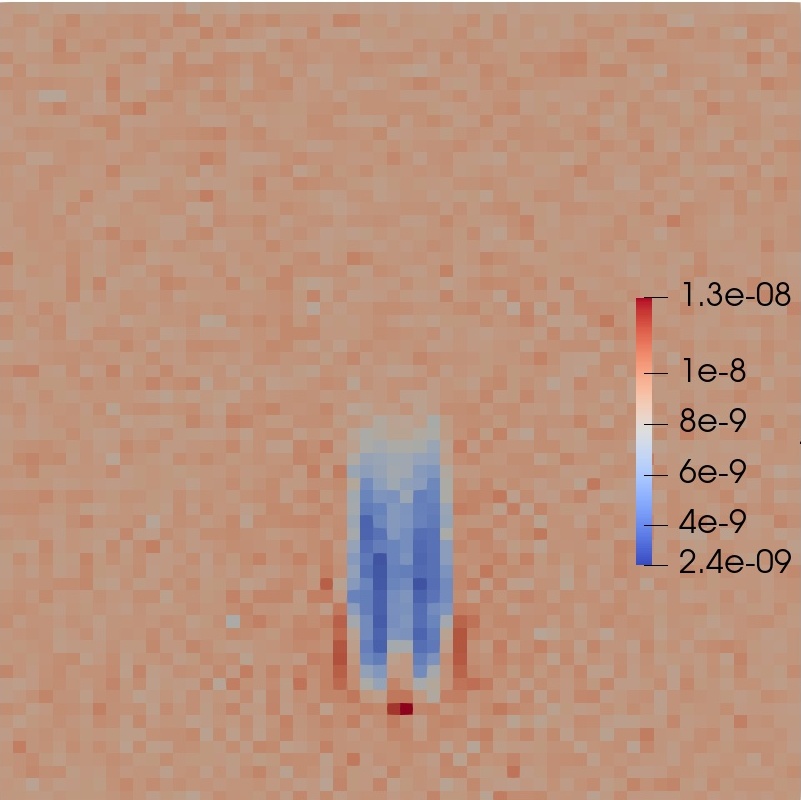}
    \caption{$t=4 \times 10^{-13} s$}
    \label{fig:rho_fifth}
\end{subfigure}
\hfill
\begin{subfigure}{0.32\textwidth}
    \includegraphics[width=\textwidth]{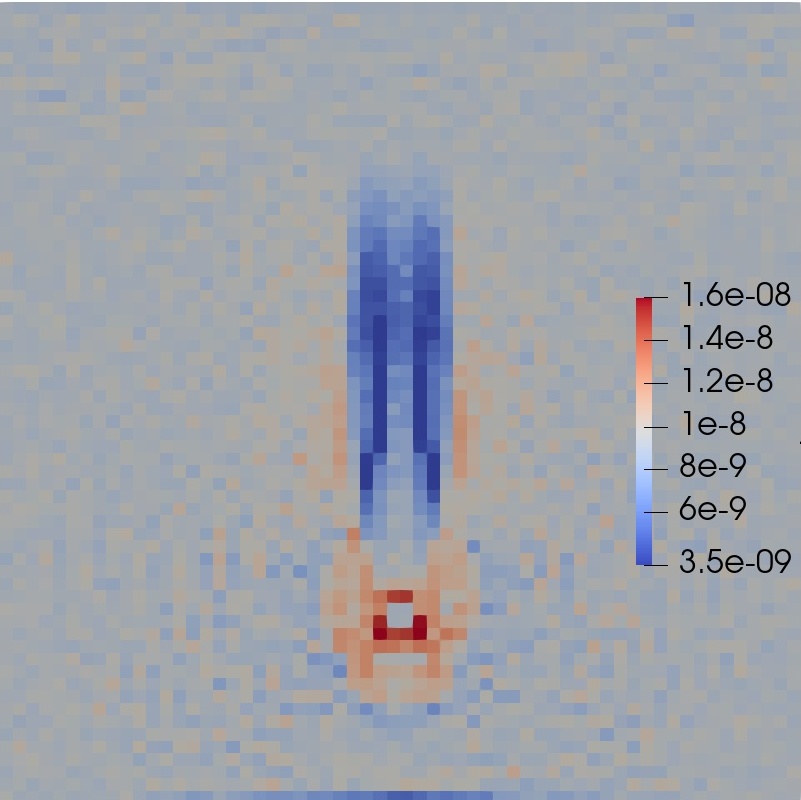}
    \caption{$t=8 \times 10^{-13} s$}
    \label{fig:rho_sixth}
\end{subfigure}

\caption{Plasma-wake generated by the hybrid formulation (\ref{eq:qqnrt_rho}-\ref{eq:qqnrt_U_k})  in Figures (a)-(b) versus the reference particle discretization in Figures (d)-(f). The reference simulation plots are obtained by projecting the particles weights onto the discontinuous Galerkin space $DG_0$. We show the slices through the origin, perpendicular to the $x$-axis. Illustrated is the density $\rho$ at $t=2 \times 10^{-13}$, $t=4 \times 10^{-13}$, $t=8 \times 10^{-13}$. $Z$-coordinate is pointing upwards. }
\label{fig:rho_plasma_wake}
\end{figure}

We note that the formulation developed in this work does not ensure the positivity of the density and cannot be used for the simulation of the blow out regime when the density goes to zero. Future work should incorporate the positivity limiter, such as the artifical viscosity \cite{dao, popov}, to keep the density positive.

\section{Acknowledgments}
Martin Kronbichler is acknowledged for his help with the \texttt{deal.II} library and Grant R. Johnson for pointing out suitable numerical experiments. We acknowledge financial support from the Deutsche Forschungsgemeinschaft (DFG) via the Collaborative Research Center SFB1491 Cosmic Interacting Matters—From Source to Signal. Calculations for this publication were performed on the HPC cluster Elysium of the Ruhr University Bochum, subsidised by the DFG (INST 213/1055-1).

\bibliographystyle{plain}

\bibliography{paper_clean}

\begin{appendix}

\section{Eigenvalues} \label{appendix_A}
Here we calculate eigenvalues of (\ref{eq:deg_zero_hybrid_rho_relativ}-\ref{eq:maxwell_b}) which physically represent the speeds present in the model. 


The relativistic fluid system can be written in terms of the primitive variables $n$ and $\bm v$. Physically, these variables represent the particle density and the center of the relativistic Vlasov-Maxwell distribution in the phase-space, i.e., $f(\bm x, \bm p, t) = n(\bm x, t) \, \delta(\bm p - m \bm v(\bm x, t))$. Conserved variables $\rho, \bm M$ can be related to the primitive variables according to: $\rho = m \, n$ and $\bm M = \rho \bm v$.

Using this relationship, equations (\ref{eq:deg_zero_hybrid_rho_relativ}-\ref{eq:deg_zero_hybrid_Mk_relativ}) can be written in terms of the primitive variables as follows
\begin{align} \label{eq:}
\partial_t n & = - \nabla \cdot \left( \frac{n \, \bm v}{\gamma(m \bm v)} \right) = - \frac{n}{\gamma(m \bm v)} \nabla \cdot \bm v - \frac{\bm v}{\gamma(m \bm v)} \cdot \nabla n + \frac{n}{c^2 \, \gamma(m \bm v)^3} \bm v \otimes \bm v : \nabla \bm v \\
\partial_t \bm v &= - \frac{\bm v}{\gamma(m \bm v)} \cdot \nabla \bm v + \frac{e}{m} \left( \bm E + \frac{\bm v}{c \gamma(m \bm v)} \times \bm B \right)
\end{align}
To calculate eigenvalues, we write the system as follows
\begin{align}
\partial_t \bm U = \bm A_x \partial_x \bm U + \bm A_y \partial_y \bm U + \bm A_z \partial_z \bm U + \bm S(\bm U)
\end{align}
where $\bm U= (n, v_x, v_y, v_z)^\top$, $\bm S(\bm U)$ is the source containing Lorentz force term with electric and magnetic fields, and $\bm A_x, \bm A_y, \bm Az$ are the flux Jacobian matrices
\begin{align}
\bm A_x = \begin{bmatrix}
-\frac{v_x}{\gamma(m \bm v)} & - \frac{n}{\gamma(m \bm v)} + \frac{n}{c^2 \, \gamma(m \bm v)^3} v_x v_x & \frac{n}{c^2 \, \gamma(m \bm v)^3} v_x v_y & \frac{n}{c^2 \, \gamma(m \bm v)^3} v_x v_z \\ 
0 & -\frac{v_x}{\gamma(m \bm v)} & 0 & 0 \\
0 & 0 & -\frac{v_x}{\gamma(m \bm v)}  & 0 \\
0 & 0 & 0 & -\frac{v_x}{\gamma(m \bm v)} 
\end{bmatrix} \\
\bm A_y = \begin{bmatrix}
-\frac{v_y}{\gamma(m \bm v)} & \frac{n}{c^2 \, \gamma(m \bm v)^3} v_y v_x & - \frac{n}{\gamma(m \bm v)}  + \frac{n}{c^2 \, \gamma(m \bm v)^3} v_y v_y & \frac{n}{c^2 \, \gamma(m \bm v)^3} v_y v_z \\ 
0 & -\frac{v_y}{\gamma(m \bm v)} & 0 & 0 \\
0 & 0 & -\frac{v_y}{\gamma(m \bm v)}  & 0 \\
0 & 0 & 0 & -\frac{v_y}{\gamma(m \bm v)} 
\end{bmatrix} \\
\bm A_y = \begin{bmatrix}
-\frac{v_z}{\gamma(m \bm v)} & \frac{n}{c^2 \, \gamma(m \bm v)^3} v_z v_x &  \frac{n}{c^2 \, \gamma(m \bm v)^3} v_z v_y & - \frac{n}{\gamma(m \bm v)}  + \frac{n}{c^2 \, \gamma(m \bm v)^3} v_z v_z \\ 
0 & -\frac{v_z}{\gamma(m \bm v)} & 0 & 0 \\
0 & 0 & -\frac{v_z}{\gamma(m \bm v)}  & 0 \\
0 & 0 & 0 & -\frac{v_z}{\gamma(m \bm v)} 
\end{bmatrix}
\end{align}
The eigenvalues of the flux Jacobian matrices represent the fastest speed of the relativistic system. Due to the upper-triangular form, the only eigenvalues of the matrices $\bm A_x$, $\bm A_y$, $\bm A_z$ are $\frac{v_x}{\gamma(m \bm v)}$, $\frac{v_y}{\gamma(m \bm v)}$, $\frac{v_z}{\gamma(m \bm v)}$, respectively. In terms of conserved variables, these correspond to $\frac{M_x}{\rho \gamma(m \bm M/ \rho)}$, $\frac{M_y}{\rho \gamma(m \bm M/\rho)}$, $\frac{M_z}{\rho \gamma(m \bm M / \rho)}$. 

\section{Momentum equation} \label{appendix_B}
In this section, we show the derivation of the momentum equation \eqref{eq:weak_M}.
We start with the momentum equation \eqref{eq:deg_zero_hybrid_Mk_relativ} and rewrite it in terms of the center of the distribution $\bm v(\bm x, t)$, so that $\bm M = \rho \bm v$ and $\bm w = \bm v /\gamma(m \bm v) $. 
\begin{align}
\partial_t (\rho \bm v) = - \nabla \cdot (\rho \, \bm w \otimes \bm v) 
\end{align}
where we omitted the Lorentz force term which is simply a source term that can be added and carried through the derivation.
Expanding the right-hand side, we obtain
\begin{align} \label{eq:momentum_eq_simpl}
\partial_t (\rho \bm v) = - \bm v \, \nabla \cdot (\rho \, \bm w) - \rho \, \bm w \cdot \nabla \bm v 
\end{align}
Next, we rewrite the term $\rho \bm w \cdot \nabla \bm v$ 
\begin{proposition} \label{prop:main_identity}
The following identity holds
\begin{align} \label{eq:main_identity}
\rho \bm w \cdot \nabla \bm v = \nabla (\rho \bm w \cdot \bm v) - \bm v \times \nabla \times (\rho \bm w) - \rho \, \nabla \left( \gamma(m \bm v) \right) c^2 - (\bm v \cdot \nabla \rho) \bm w - \rho \bm v \cdot \nabla \left[ \frac{1}{\gamma(m \bm v)} \right] \otimes \bm v
\end{align}
where $\bm w = \bm v / \gamma(m \bm v)$.
\end{proposition}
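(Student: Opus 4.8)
The plan is to reduce \eqref{eq:main_identity} to the classical product rule
\begin{align*}
\nabla(\bm a\cdot\bm b) = \bm a\times(\nabla\times\bm b) + \bm b\times(\nabla\times\bm a) + (\bm a\cdot\nabla)\bm b + (\bm b\cdot\nabla)\bm a
\end{align*}
together with the elementary relation $\gamma^2 = 1 + \bm v\cdot\bm v/c^2$, where I abbreviate $\gamma := \gamma(m\bm v)$. First I would apply the product rule with $\bm a = \rho\bm w$ and $\bm b = \bm v$ and solve for $(\rho\bm w\cdot\nabla)\bm v$, which gives
\begin{align*}
\rho\bm w\cdot\nabla\bm v = \nabla(\rho\bm w\cdot\bm v) - \bm v\times\nabla\times(\rho\bm w) - \rho\bm w\times(\nabla\times\bm v) - (\bm v\cdot\nabla)(\rho\bm w).
\end{align*}
The first two terms already coincide with terms appearing in \eqref{eq:main_identity}, so the task reduces to showing that $-\rho\bm w\times(\nabla\times\bm v) - (\bm v\cdot\nabla)(\rho\bm w)$ equals $-\rho\nabla(\gamma)c^2 - (\bm v\cdot\nabla\rho)\bm w - \rho\,\bm v\cdot\nabla[1/\gamma]\otimes\bm v$.

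Next I would apply the Leibniz rule to split $(\bm v\cdot\nabla)(\rho\bm w) = (\bm v\cdot\nabla\rho)\bm w + \rho(\bm v\cdot\nabla)\bm w$; the first summand cancels the matching term on the right, and after dividing through by $\rho$ the claim reduces to the $\rho$-free identity
\begin{align*}
\bm w\times(\nabla\times\bm v) + (\bm v\cdot\nabla)\bm w = c^2\nabla\gamma + \bigl(\bm v\cdot\nabla[1/\gamma]\bigr)\bm v.
\end{align*}
Substituting $\bm w = \bm v/\gamma$ and again using the Leibniz rule, now on $(\bm v\cdot\nabla)(\bm v/\gamma)$, the two occurrences of $\bigl(\bm v\cdot\nabla[1/\gamma]\bigr)\bm v$ cancel and one is left with $\tfrac{1}{\gamma}\bigl[\bm v\times(\nabla\times\bm v) + (\bm v\cdot\nabla)\bm v\bigr] = c^2\nabla\gamma$.

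Finally I would close the argument with the standard identity $\bm v\times(\nabla\times\bm v) + (\bm v\cdot\nabla)\bm v = \tfrac12\nabla(\bm v\cdot\bm v)$ and the relation $\gamma^2 = 1 + (\bm v\cdot\bm v)/c^2$, which together yield $\tfrac12\nabla(\bm v\cdot\bm v) = c^2\gamma\nabla\gamma$, so that dividing by $\gamma$ gives exactly the required equality. I expect no genuine obstacle here; the one thing to watch is the bookkeeping of the Leibniz expansions and reading the symbol $\otimes$ in the last term of \eqref{eq:main_identity} as ordinary scalar-times-vector multiplication of $\bm v\cdot\nabla[1/\gamma]$ with $\bm v$. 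An equally short alternative is a direct index computation, writing the $i$-th component of each term as a sum of $\partial_j$-derivatives and matching.
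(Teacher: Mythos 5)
Your proof is correct and follows essentially the same strategy as the paper: a direct algebraic verification using the vector-calculus product rules for $\nabla(\bm a\cdot\bm b)$ and $\bm a\times(\nabla\times\bm b)$, the Leibniz expansion of $\bm v\cdot\nabla(\rho\bm w)$ with $\bm w=\bm v/\gamma$, and the relation $c^2\gamma\nabla\gamma=\tfrac12\nabla(\bm v\cdot\bm v)$ coming from $\gamma^2=1+\bm v\cdot\bm v/c^2$; your reading of $\otimes$ as scalar-times-vector is also the intended one. The only organizational difference is that you route the computation through the extra term $\bm w\times(\nabla\times\bm v)$ and the Bernoulli-type identity $\bm v\times(\nabla\times\bm v)+(\bm v\cdot\nabla)\bm v=\tfrac12\nabla(\bm v\cdot\bm v)$, whereas the paper expands $\nabla(\rho\bm w\cdot\bm v)$ and $\bm v\times\nabla\times(\rho\bm w)$ directly into Jacobian contractions and uses $\rho c^2\nabla\gamma=\nabla\bm v\cdot(\rho\bm w)$, never forming $\nabla\times\bm v$.
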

\begin{proof}
In order to verify this identity, we rewrite the first two terms on the right-hand side \eqref{eq:main_identity}
\begin{align}
\nabla \left( \rho \bm w \cdot \bm v \right) = \nabla(\rho \bm w) \cdot \bm v +\nabla \bm v \cdot (\rho \bm w )\\
\bm v \times \nabla \times \left( \rho \bm w\right) = \nabla \left( \rho \bm w \right) \cdot \bm v - \bm v \cdot \nabla \left( \rho \bm w\right)
\end{align}
substituting into \eqref{eq:main_identity} we obtain
\begin{align} \label{eq:main_identity_simpl}
\rho \bm w \cdot \nabla \bm v = \nabla \bm v \cdot (\rho \bm w ) +  \bm v \cdot \nabla \left( \rho \bm w\right) - \rho \, \nabla \left( \gamma(m \bm v) \right) c^2 - (\bm v \cdot \nabla \rho) \bm w - \rho \bm v \cdot \nabla \left[ \frac{1}{\gamma(m \bm v)} \right] \otimes \bm v
\end{align}
next we rewrite the second and the third terms on the right-hand side of \eqref{eq:main_identity_simpl}
\begin{align}
& \bm v \cdot \nabla (\rho \bm w) = \bm v \cdot \nabla \left( \rho \frac{\bm v}{\gamma(m \bm v)}\right)  = (\bm v \cdot \nabla \rho ) \bm w + \rho \bm w \cdot \nabla \bm v + \rho \bm v \cdot \nabla \left[ \frac{1}{\gamma(m \bm v)} \right] \otimes \bm v \\ \label{eq:interm_identity}
& \rho \nabla(\gamma(m \bm v)) c^2 = \rho \frac{\nabla \bm v \cdot \bm v}{\gamma(m \bm v)} = \nabla \bm v \cdot (\rho \bm w)
\end{align}
substituting into \eqref{eq:main_identity_simpl} gives the desired result.
\end{proof}
Using the identity from Proposition \ref{prop:main_identity}, we rewrite the momentum equation \eqref{eq:momentum_eq_simpl} and test it with a smooth test function $\bm \mu$
\begin{align} \label{eq:momentum_eq_weak}
\int_{\Omega} \partial_t (\rho \bm v) \cdot \bm \mu \, dx &= - \int_{\Omega} (\bm \mu \cdot \bm v ) \, \nabla \cdot (\rho \, \bm w) \, dx \\ \notag
& - \int_{\Omega} \bm \mu \cdot \nabla (\rho \bm w \cdot \bm v) \, dx + \int_{\Omega} \left[ \bm v \times \nabla \times (\rho \bm w) \right] \cdot \bm \mu \, dx + \int_{\Omega} \rho \, \bm \mu \cdot \nabla \left( \gamma(m \bm v) \right) c^2 \, dx \\ \notag
& + \int_{\Omega} (\bm v \cdot \nabla \rho) (\bm w \cdot \bm \mu) \, dx + \int_{\Omega} \rho \bm v \cdot \nabla \left[ \frac{1}{\gamma(m \bm v)} \right] ( \bm v \cdot \bm \mu ) \, dx 
\end{align}
we expand the first term on the right-hand side of \eqref{eq:momentum_eq_weak}
\begin{align}
-\int_{\Omega} (\bm \mu \cdot \bm v) \nabla \cdot (\rho \bm w) \, dx = - \int_{\Omega} (\bm \mu \cdot \bm v) (\nabla \rho \cdot \bm w) \, dx - \int_{\Omega} (\bm \mu \cdot \bm v) \left(\rho \frac{\nabla \cdot \bm v}{\gamma(m \bm v)} \right) \, dx - \int_{\Omega} (\bm \mu \cdot \bm v) \left(\rho \bm v \cdot \nabla \left(\frac{1}{\gamma(m \bm v)} \right) \right)
\end{align}
we expand the second term 
\begin{align}
- \int_{\Omega} \bm \mu \cdot \nabla (\rho \bm w \cdot \bm v) \, dx & = - \int_{\Omega} (\bm \mu \cdot \nabla \rho )(\bm w \cdot \bm v) \, dx - \int_{\Omega} (\bm \mu \cdot \bm v) \cdot (\rho \bm w) \, dx 
\end{align}
we expand the third term 
\begin{align}
\int_{\Omega} \left[ \bm v \times \nabla \times (\rho \bm w) \right] \cdot \bm \mu \, dx &= \int_{\Omega} \left[\bm \mu \cdot \nabla \left( \rho \bm w \right) \cdot \bm v - \bm v \cdot \nabla \left( \rho \bm w \right) \cdot \bm \mu \right] \, dx \\ \notag
&= \int_{\partial \Omega} (\bm \mu \cdot \bm n ) (\rho \bm w \cdot \bm v) \, ds - \int_{\Omega} \left(\nabla \cdot \bm \mu \right) (\rho \bm w \cdot \bm v) \, dx - \int_{\Omega} \left( \bm \mu \cdot \nabla \bm v \right) \cdot (\rho \bm w) \, dx \\ \notag
&- \int_{\partial \Omega} (\bm v \cdot \bm n)(\rho \bm w \cdot \bm \mu)\, ds + \int_{\Omega} (\nabla \cdot \bm v) (\rho \bm w \cdot \bm \mu) \, dx + \int_{\Omega} (\bm v \cdot \nabla \bm \mu ) \cdot (\rho \bm w) \, dx 
\end{align}
where we used the integration by parts in the last step. Then \eqref{eq:momentum_eq_weak} simplifies
\begin{align} \notag
-\int_{\Omega} \partial_t(\rho \bm v) \cdot \bm \mu \, dx =& - \int_{\Omega} \rho \bm \mu \cdot \nabla(\gamma(m \bm v)) c^2 \, dx - \int_{\Omega}(\bm \mu \cdot \nabla \rho) (\bm w \cdot \bm v) \, dx \\ \notag
& - \int_{\Omega} \rho \bm \mu \cdot \nabla \bm w \cdot \bm v \, dx - \int_{\Omega}(\nabla \cdot \bm \mu ) (\rho \bm v \cdot \bm w) \, dx \\ \label{eq:simplified}
& + \int_{\Omega} (\rho \bm v \cdot \nabla \bm \mu \cdot \bm w) \, dx + \int_{\partial \Omega} (\bm \mu \cdot \bm n) (\rho \bm w \cdot \bm v) \, dx - \int_{\partial \Omega} (\bm v \cdot \bm n) (\rho \bm w \cdot \bm \mu) \, dx
\end{align}
in this simplification we also used \eqref{eq:interm_identity}.
Next, we work on the second and the fourth terms on the right-hand side of \eqref{eq:simplified}.
\begin{align}
- \int_{\Omega}(\bm \mu \cdot \nabla \rho) (\bm w \cdot \bm v) \, dx - \int_{\Omega}(\nabla \cdot \bm \mu ) (\rho \bm v \cdot \bm w) \, dx &= -\int_{\Omega} \nabla \cdot (\rho \bm \mu) (\bm v \cdot \bm w) \, dx \\ \notag
&= -\int_{\partial \Omega} (\bm \mu \cdot \bm n) (\rho \bm v \cdot \bm w) \, dx + \int_{\Omega} \rho \bm \mu \cdot \nabla (\bm w \cdot \bm v) \, dx
\end{align}
substituting into \eqref{eq:simplified} we obtain
\begin{align} \notag
\int_{\Omega} \partial_t(\rho \bm v) \cdot \bm \mu \, dx =& \int_{\Omega} (\bm w \cdot \nabla \bm \mu  - \bm \mu \cdot \nabla \bm w) \cdot (\rho \bm v )\, dx - \int_{\Omega} \rho \bm \mu \cdot \nabla \left(\gamma(m \bm v) - \frac{\bm w \cdot \bm v}{c^2} \right) c^2 \, dx \\
& - \int_{\partial \Omega} (\bm v \cdot \bm n) (\rho \bm w \cdot \bm \mu) \, dx
\end{align}
in terms of the conserved variables the boundary term reads $ \int_{\partial \Omega} (\bm v \cdot \bm n) (\rho \bm w \cdot \bm \mu) \, dx =  \int_{\partial \Omega} (\bm w \cdot \bm n) (\rho \bm v \cdot \bm \mu) \, dx  =  \int_{\partial \Omega} (\bm w \cdot \bm n) (\bm M \cdot \bm \mu) \, dx$. The boundary term clearly vanishes for $\bm M \cdot \bm n |_{\partial \Omega}=0$.
\end{appendix}

\end{document}